\begin{document}
% define theorem environments
\newtheorem{theorem}{Theorem}
 %[section]
%\def\thetheorem{\unskip}
\newtheorem{proposition}[theorem]{Proposition}
\newtheorem{claim}[theorem]{Claim}
\newtheorem{conjecture}[theorem]{Conjecture}
\def\theconjecture{\unskip}
\newtheorem{corollary}[theorem]{Corollary}
\newtheorem{lemma}[theorem]{Lemma}
\newtheorem{sublemma}[theorem]{Sublemma}
\newtheorem{observation}[theorem]{Observation}
\theoremstyle{definition}
\newtheorem{definition}{Definition}
\newtheorem{notation}[definition]{Notation}
\newtheorem{remark}[definition]{Remark}
\newtheorem{question}[definition]{Question}
\newtheorem{questions}[definition]{Questions}
\newtheorem{example}[definition]{Example}
\newtheorem{problem}[definition]{Problem}
\newtheorem{exercise}[definition]{Exercise}

\numberwithin{theorem}{section} \numberwithin{definition}{section}
\numberwithin{equation}{section}

\def\earrow{{\mathbf e}}
\def\rarrow{{\mathbf r}}
\def\uarrow{{\mathbf u}}
\def\varrow{{\mathbf V}}
\def\tpar{T_{\rm par}}
\def\apar{A_{\rm par}}

\def\reals{{\mathbb R}}
\def\torus{{\mathbb T}}
\def\heis{{\mathbb H}}
\def\integers{{\mathbb Z}}
\def\naturals{{\mathbb N}}
\def\complex{{\mathbb C}\/}
\def\distance{\operatorname{distance}\,}
\def\support{\operatorname{support}\,}
\def\dist{\operatorname{dist}\,}
\def\Span{\operatorname{span}\,}
\def\degree{\operatorname{degree}\,}
\def\kernel{\operatorname{kernel}\,}
\def\dim{\operatorname{dim}\,}
\def\codim{\operatorname{codim}}
\def\trace{\operatorname{trace\,}}
\def\Span{\operatorname{span}\,}
\def\dimension{\operatorname{dimension}\,}
\def\codimension{\operatorname{codimension}\,}
\def\nullspace{\scriptk}
\def\kernel{\operatorname{Ker}}
\def\ZZ{ {\mathbb Z} }
\def\p{\partial}
\def\rp{{ ^{-1} }}
\def\Re{\operatorname{Re\,} }
\def\Im{\operatorname{Im\,} }
\def\ov{\overline}
\def\eps{\varepsilon}
\def\lt{L^2}
\def\diver{\operatorname{div}}
\def\curl{\operatorname{curl}}
\def\etta{\eta}
\newcommand{\norm}[1]{ \|  #1 \|}
\def\expect{\mathbb E}
\def\bull{$\bullet$\ }

\def\xone{x_1}
\def\xtwo{x_2}
\def\xq{x_2+x_1^2}
\newcommand{\abr}[1]{ \langle  #1 \rangle}

\newcommand{\Norm}[1]{ \left\|  #1 \right\| }
\newcommand{\set}[1]{ \left\{ #1 \right\} }
\def\one{\mathbf 1}
\def\whole{\mathbf V}
\newcommand{\modulo}[2]{[#1]_{#2}}
\def \essinf{\mathop{\rm essinf}}
\def\scriptf{{\mathcal F}}
\def\scriptg{{\mathcal G}}
\def\scriptm{{\mathcal M}}
\def\scriptb{{\mathcal B}}
\def\scriptc{{\mathcal C}}
\def\scriptt{{\mathcal T}}
\def\scripti{{\mathcal I}}
\def\scripte{{\mathcal E}}
\def\scriptv{{\mathcal V}}
\def\scriptw{{\mathcal W}}
\def\scriptu{{\mathcal U}}
\def\scriptS{{\mathcal S}}
\def\scripta{{\mathcal A}}
\def\scriptr{{\mathcal R}}
\def\scripto{{\mathcal O}}
\def\scripth{{\mathcal H}}
\def\scriptd{{\mathcal D}}
\def\scriptl{{\mathcal L}}
\def\scriptn{{\mathcal N}}
\def\scriptp{{\mathcal P}}
\def\scriptk{{\mathcal K}}
\def\frakv{{\mathfrak V}}
\def\C{\mathbb{C}}
\def\R{\mathbb{R}}
\def\Rn{{\mathbb{R}^n}}
\def\Sn{{{S}^{n-1}}}
\def\M{\mathbb{M}}
\def\N{\mathbb{N}}
\def\Q{{\mathbb{Q}}}
\def\Z{\mathbb{Z}}
\def\F{\mathcal{F}}
\def\L{\mathcal{L}}
\def\S{\mathcal{S}}
\def\supp{\operatorname{supp}}
\def\dist{\operatorname{dist}}
\def\essi{\operatornamewithlimits{ess\,inf}}
\def\esss{\operatornamewithlimits{ess\,sup}}
\begin{comment}
\def\scriptx{{\mathcal X}}
\def\scriptj{{\mathcal J}}
\def\scriptr{{\mathcal R}}
\def\scriptS{{\mathcal S}}
\def\scripta{{\mathcal A}}
\def\scriptk{{\mathcal K}}
\def\scriptp{{\mathcal P}}
\def\frakg{{\mathfrak g}}
\def\frakG{{\mathfrak G}}
\def\boldn{\mathbf N}
\end{comment}
\author{Mingming Cao}
\address{Mingming Cao
\\
School of Mathematical Sciences
\\
Beijing Normal University
\\
Laboratory of Mathematics and Complex Systems
\\
Ministry of Education
\\
Beijing 100875
\\
People's Republic of China
}
\email{m.cao@mail.bnu.edu.cn}

\author{Qingying Xue}
\address{
        Qingying Xue\\
        School of Mathematical Sciences\\
        Beijing Normal University \\
        Laboratory of Mathematics and Complex Systems\\
        Ministry of Education\\
        Beijing 100875\\
        People's Republic of China}
\email{qyxue@bnu.edu.cn}
\author{K\^{o}z\^{o} Yabuta}
\address{K\^{o}z\^{o} Yabuta\\Research center for Mathematical
Science \\Kwansei Gakuin University\\Gakuen 2-1, Sanda 669-1337\\
Japan }
\thanks{The second author was supported partly by NSFC
(No. 11471041), the Fundamental Research Funds for the Central Universities (No. 2014KJJCA10) and NCET-13-0065. The third named author was supported partly by Grant-in-Aid for Scientific Research (C) Nr. 15K04942, Japan Society
for the Promotion of Science.\\ \indent Corresponding
author: Qingying Xue\indent Email: qyxue@bnu.edu.cn}

\keywords{Multilinear fractional strong maximal operator; multiple weights; two-weight inequalities; end-point estimate.}

\date{December 28, 2015.}
\title[Multilinear Fractional Strong Maximal Operator]{On the Boundedness of Multilinear Fractional Strong Maximal Operator with multiple weights}
\maketitle

\begin{abstract}In this paper, we investigated the boundedness of multilinear fractional strong maximal operator $\mathcal{M}_{\mathcal{R},\alpha}$ associated with rectangles or related to more general basis with multiple weights $A_{(\vec{p},q),\mathcal{R}}$. In the rectangles setting, we first gave an end-point estimate of $\mathcal{M}_{\mathcal{R},\alpha}$, which not only extended the famous linear result of Jessen, Marcinkiewicz and Zygmund, but also extended the multilinear result of Grafakos, Liu, P\'{e}rez and Torres ($\alpha=0$) to the case $0<\alpha<mn.$ Then, in one weight case, we gave several equivalent characterizations between $\mathcal{M}_{\mathcal{R},\alpha}$ and $A_{(\vec{p},q),\mathcal{R}}$, by applying a different approach from what we have used before. Moreover, a sufficient condition for the two weighted norm inequality of $\mathcal{M}_{\mathcal{R},\alpha}$ was presented and a version of vector-valued two weighted inequality for the strong maximal operator was established when $m=1$. In the general basis setting, we further studied the properties of the multiple weights $A_{(\vec{p},q),\mathcal{R}}$ conditions, including the equivalent characterizations and monotonic properties, which essentially extended one's previous understanding. Finally, a survey on multiple strong Muckenhoupt weights was given, which
demonstrates the properties of multiple weights related to rectangles systematically.
\end{abstract}

%%%%%%%%%%%%%%%%%%%%%%%%%%%%%%%%%%%%%%%%%%%%%%%%%%%%%%%%%%

\section{Introduction}
It is well-known that the study of multi-parameter operators originated in the works of Fefferman and Stein \cite{F-S} on bi-parameter singular integral operators. Subsequently, Journ\'{e} \cite{Journe} gave a multi-parameter version of $T1$ theorem on product spaces. Later on, a new type of $T1$ theorem on product spaces was formulated by Pott and Villarroya \cite{PV}. Recently, Martikainen \cite{M2012} demonstrated a bi-parameter representation of singular integrals in expression of the dyadic shifts, which extended the famous result of Hyt\"{o}nen \cite{H} for one-parameter case. Still more recently, using the probabilistic methods and the techniques of dyadic analysis, Hyt\"{o}nen and Martikainen \cite{HM} gave a bi-parameter version of $T1$ theorem in spaces of non-homogeneous type. Furthermore, a bi-parameter version of $Tb$ theorem on product Lebesgue spaces was obtained by Ou \cite{Ou}, where $b$ is a tensor product of two pseudo-accretive functions.

It is also well-known that the most prototypical representative of the multi-parameter operators is the following strong maximal operator $M_{\mathcal{R}}$:
$$
M_{\mathcal{R}}f(x) := \sup_{\substack{R \ni x \\ R \in \mathcal{R} }} \frac{1}{|R|} \int_R |f(y)| dy, \ \ x \in \Rn,
$$
where $\mathcal{R}$ is the collection of all rectangles $R \subset \Rn$ with sides parallel to the coordinate axes.
It can be looked as a geometric maximal operator which commutes with full n-parameter group of dilations $(x_1,\ldots,x_n) \rightarrow (\delta_1 x_1,\ldots, \delta_n x_n)$. The strong $L^p(\Rn)(1<p<\infty)$ boundedness of $M_{\mathcal{R}}$ was given by Garc\'{i}a-Cuerva and Rubio de Francia \cite[~p.456]{C-Rubio}. In \rm{1935}, a maximal theorem was given by Jessen, Marcinkiewicz and Zygmund in \cite{JMZ}. They pointed out that unlike the classical Hardy-Littlewood maximal operator, the strong maximal function is not of weak type $(1,1)$. Moreover, they studied the end-point behavior of $M_{\mathcal{R}}$ and obtained the following inequality:
\begin{equation}\label{endpoint-JMZ}
\big|\{x \in \Rn; M_{\mathcal{R}}f(x)>\lambda \}\big|
\lesssim_{n} \int_{\Rn} \frac{|f(x)|}{\lambda} \left(1 + \Big(\log^+ \frac{|f(x)|}{\lambda}\Big)^{n-1}\right) dx.
\end{equation}
In 1975, C\'{o}rboda and Fefferman \cite{CF} gave a geometric proof of $(\ref{endpoint-JMZ})$ and established a covering lemma for rectangles. Their covering lemma is quite useful by the reason that it overcomes the failure of the Besicovitch covering argument for rectangles with arbitrary eccentricities. The selection algorithm given by C\'{o}rboda and Fefferman was used many times to gain end-point estimates for $M_{\mathcal{R}}$, as demonstrated in \cite{C}, \cite{F1981}, \cite{GLPT}, \cite{Ha-P}, \cite{L-Luque}, \cite{LS}, \cite{LP}, \cite{M}.

The corresponding weighted version of $(\ref{endpoint-JMZ})$ with $w \in A_{1,\mathcal{R}}$ was shown by Bagby and Kurtz \cite{BK}. In addition, the weighted weak type and strong type norm inequalities for vector-valued strong maximal operator were obtained in \cite{CG}. It is worth pointing out that it is the first time to avoid using C\'{o}rboda-Fefferman's covering lemma to obtain the end-point estimate of $M_{\mathcal{R}}$. Subsequently, the above weighted results were improved by enlarging the range of weights class  in \cite{LP} and \cite{M}. In \cite{LP}, Luque and Parissis formulated a weighted version of C\'{o}rdoba-Fefferman covering lemma and showed that the following weighted inequality holds:
\begin{equation}\label{Weighted}
w\big(\{x \in \Rn; M_{\mathcal{R}}f(x)>\lambda \}\big)
\lesssim_{w,n} \int_{\Rn} \frac{|f(x)|}{\lambda} \left(1 + \Big(\log^+ \frac{|f(x)|}{\lambda}\Big)^{n-1}\right) M_{\mathcal{R}} w(x) dx.
\end{equation}
For $n=2$, inequality (\ref{Weighted}) was proved by Mitsis \cite{M} whenever $w \in A_{p,\mathcal{R}}$ and $1<p<\infty$. Unfortunately, the combinatorics of two-dimensional rectangles that the author used are not available in higher dimensions. To overcome this obstacle, Luque and Parissis \cite{LP} adopted a different approach, which relies heavily on the best constant of the weighted estimates of the strong maximal operator \cite{LS}.

In 2011, Grafakos et al. \cite{GLPT} first introduced the multilinear version of the strong maximal operator $\mathcal{M}_{\mathcal{R}}$ by setting
\begin{equation}\label{M-R}
\mathcal{M}_{\mathcal{R}}(\vec{f})(x)=
\sup_{\substack{R \ni x \\ R\in\mathcal{R}}} \prod_{i=1}^m \frac{1}{|R|} \int_R |f_i(y)| dy, \ \  \quad x \in \Rn
\end{equation}
where $\vec{f}=(f_1, \cdots, f_m)$ is an $m$-dimensional vector of locally integrable functions.
It was shown that for any $\lambda > 0$, the following inequality holds
\begin{equation}\label{Endpoint-Grafakos}
\Big| \big\{x \in \Rn; \mathcal{M}_{\mathcal{R}}(\vec{f})(x) > \lambda^m \big\} \Big|
\lesssim_{m,n} \bigg(\prod_{i=1}^m  \int_{\Rn} \Phi_n^{(m)} \left(\frac{|f_i(y)|}{\lambda}\right) dy\bigg)^{1/m},
\end{equation}
where $\Phi_n(t):=t[1+(\log ^+ t)^{n-1}]$ $(t>0)$ and $\Phi_n^{(m)}$ is
$m$-times compositions of the function $\Phi_n$ with itself.
Furthermore, inequality (\ref{Endpoint-Grafakos}) is sharp in the sense that one cannot replace $\Phi_n^{(m)}$ by $\Phi_n^{(k)}$ for $k \leq m-1$. Similarly, one can define the multilinear maximal function $\mathcal{M}_{\mathscr{B}}$ on a general basis $\mathscr{B}$ if $\mathcal{R}$ is replaced by $\mathscr{B}$ in $(\ref{M-R})$. In \cite{GLPT} , the authors also proved that for a Muckenhoupt basis $\mathscr{B}$, the multilinear maximal operator
$\mathcal{M}_{\mathscr{B}}$ is bounded from $L^{p_1}(w_1) \times \cdots \times L^{p_m}(w_m)$ to $L^{p,\infty}(v)$ provided that $(\vec{w},v)$ are weights satisfying $v \in A_{\infty,\mathscr{B}}$ and the power bump condition for some $r > 1$,
\begin{equation}\label{general baisis power bump}
\sup_{B \in \mathscr{B}} \left( \frac{1}{|B|} \int_B v dx \right)
\prod_{i=1}^m \left( \frac{1}{|B|} \int_B w_i^{(1-p_i')r} dx \right)^{{p}/{p_i' r}} < \infty.
\end{equation} Subsequently, under more weaker condition (Tauberian condition) than $v \in A_{\infty,\mathscr{B}}$, Liu and Luque \cite{L-Luque} investigated the strong boundedness of two-weighted inequality for the maximal operator $\mathcal{M}_{\mathscr{B}}$. They showed that if $M_{\mathscr{B}}$ satisfies the Tauberian condition (condition $(A)$ \cite{HLP}, \cite{J}, \cite{Perez1993}) with respect to some $\gamma \in (0,1)$ and a weight $\mu$ as follows: there exists a positive constant $C_{\mathscr{B},\gamma,\mu}$ such that, for all measurable sets $E$, it holds that
\begin{equation}\label{condition A}
\tag{$A_{\mathscr{B},\gamma,\mu}$}\mu \big(\{x \in \Rn; M_{\mathscr{B}}(\mathbf{1}_E)(x) > \gamma \} \big) \leq C_{\mathscr{B},\gamma,\mu}  \mu(E).
\end{equation}
 Then,
$\mathcal{M}_{\mathscr{B}}$ enjoys the boundedness property from the product spaces $L^{p_1}(w_1) \times \cdots \times L^{p_m}(w_m)$ to $ L^p(v)$.  Recently, Hagelstein et al. \cite{HLP} discussed the relationship between the boundedness of $M_{\mathscr{B}}$,  Tauberian condition $(A_{\mathscr{B},\gamma,\mu})$ and weighted Tauberian condition. Furthermore, Hagelstein and Parissis \cite{Ha-P} proved that the asymptotic estimate for weighted Tauberian constant associated to rectangles is equivalent to
$w \in A_{\infty,\mathcal{R}}$, which gives a new characterization of the class $A_{\infty,\mathcal{R}}$.

Still more recently, Cao, Xue and Yabuta \cite{CXY} introduced the multilinear fractional strong maximal operator $\mathcal{M}_{\mathcal{R}, \alpha}$ and multiple weights $A_{(\vec{p},q),\mathcal{R}}$ associated with rectangles as follows:
\begin{eqnarray*}
\mathcal{M}_{\mathcal{R},\alpha}(\vec{f})(x)&=&\sup_{\substack{R \ni x \\ R \in \mathcal{R}}}\prod_{i=1}^m \frac{1}{|R|^{1-\frac{\alpha}{mn}}} \int_R |f_i(y)| dy, \ \  \ \ \quad x\in \Rn, \\
{}[\vec{w},v]_{A_{({\vec{p}},q),\mathcal{R}}} &:=& \sup_{R \in \mathcal{R}} |R|^{\frac{\alpha}{n} + \frac1q - \frac1p}
\left( \frac{1}{|R|} \int_R \nu dx \right)^{\frac{1}{q}} \prod_{i=1}^m \left( \frac{1}{|R|} \int_R \omega_i^{1-p_i'} dx \right)^{\frac{1}{p_i'}} < \infty, \ \  0 \leq \alpha < mn.
\end{eqnarray*}
In order to establish the two-weighted estimates of $\mathcal{M}_{\mathcal{R},\alpha}$, the authors introduced the dyadic reverse doubling condition associated with rectangles, which is weaker than $A_{\infty,\mathcal{R}}$. It was showed that if each $w_i^{1-p_i'}$ satisfies the dyadic reverse doubling condition, then $\mathcal{M}_{\mathcal{R},\alpha}$ is bounded from $L^{p_1}(w_1) \times \cdots \times L^{p_m}(w_m)$ to $L^{q}(v)$ if and only if $(\vec{w},v) \in A_{({\vec{p}},q),\mathcal{R}}$.

Motivated by the works in \cite{CXY}, \cite{GLPT} and \cite{L-Luque}, in this paper, we will investigate the boundedness of multilinear strong and fractional strong maximal operators in the setting of rectangles and in the setting of more general basis. We are mainly concerned with the end-point behavior, characterizations of two weighted norm inequalities and vector-valued norm inequalities. A survey will also be given on multiple strong Muckenhoupt weights, which
demonstrates the properties of multiple weights associated with rectangles systematically.
%%%%%%%%%%%%%%%%%%%%%%%%%%%%%%%%%%%
%%%%%%%%%%%%%%%%%%%%%%%%%%%%%%%%%%%%%%
\section{Definitions and Main results}

\subsection{Rectangle setting.} We now formulate the main results of the maximal operators related
to rectangles. The first result is concerned with the end-point behavior of $ \mathcal{M}_{\mathcal{R},\alpha}$.
%%%%%%%%%%%%%%%%% Theorem 2.1 %%%%%%%%%%%%%%%%%%%%%%%%%%%%%%%
\begin{theorem}\label{Theorem endpoint}
Let $n $, $m \geq 1$ and $0 \leq \alpha < mn$. Then for any $\lambda > 0$,
the following end-point inequality holds,
\begin{align*}
\Big| \big\{x \in \Rn; \mathcal{M}_{\mathcal{R},\alpha}(\vec{f})(x) > \lambda^m \big\} \Big|^{m-\frac{\alpha}{n}}
\lesssim_{m,n,\alpha}& \prod_{i=1}^m \bigg[ 1 + \bigg(\frac{\alpha}{mn}
\log^+ \prod_{j=1}^m \int_{\Rn} \Phi_n^{(m)} \Big(\frac{|f_j(y)|}{\lambda}\Big) dy \bigg)^{n-1}\bigg]^m \\
&\quad \times \int_{\Rn} \Phi_n^{(m)} \left(\frac{|f_i(y)|}{\lambda}\right) dy.
\end{align*}
\end{theorem}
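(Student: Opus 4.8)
\medskip
\noindent\textbf{Plan of proof.}
The idea is to \emph{deduce} the estimate from the non-fractional end-point inequality \eqref{Endpoint-Grafakos} of Grafakos, Liu, P\'erez and Torres \cite{GLPT}, using the observation that the dilation factor $|R|^{\alpha/n}$ built into $\mathcal{M}_{\mathcal{R},\alpha}$ a priori restricts the size of every rectangle relevant to a fixed level set, which reduces matters to the case $\alpha=0$ at a conveniently shifted level. By homogeneity we normalize $\lambda=1$ (replacing $f_i$ by $f_i/\lambda$); writing $W:=\prod_{j=1}^m\int_{\Rn}\Phi_n^{(m)}(|f_j|)\,dx$ we may assume $0<W<\infty$, the other cases being trivial. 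Now suppose $\mathcal{M}_{\mathcal{R},\alpha}(\vec f)(x)>1$ and pick $R\in\mathcal{R}$ with $R\ni x$ and $\prod_{i=1}^m|R|^{-1+\alpha/(mn)}\int_R|f_i|>1$, i.e. $\prod_i\int_R|f_i|>|R|^{\,m-\alpha/n}$. Since $\Phi_n^{(m)}(t)\ge t$ this forces $|R|^{\,m-\alpha/n}<\prod_i\|f_i\|_1\le W$, hence
\[
\mathcal{M}_{\mathcal{R}}(\vec f)(x)\ \ge\ \prod_{i=1}^m\frac{1}{|R|}\int_R|f_i|\ >\ |R|^{-\alpha/n}\ >\ W^{-\beta},\qquad \beta:=\frac{\alpha/n}{m-\alpha/n}.
\]
Therefore $\{\mathcal{M}_{\mathcal{R},\alpha}(\vec f)>1\}\subseteq\{\mathcal{M}_{\mathcal{R}}(\vec f)>W^{-\beta}\}$, and \eqref{Endpoint-Grafakos} applied at the level $W^{-\beta}=(W^{-\beta/m})^m$ yields
\[
\big|\{\mathcal{M}_{\mathcal{R},\alpha}(\vec f)>1\}\big|\ \lesssim_{m,n}\ \Big(\prod_{i=1}^m\int_{\Rn}\Phi_n^{(m)}\big(W^{\beta/m}|f_i|\big)\,dx\Big)^{1/m}.
\]

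The remaining work is to unscale the right-hand side. The key ingredient is the elementary dilation estimate for the iterated Young function: since $\log^+(ct)\le\log^+c+\log^+t$, an induction on $m$ gives, for all $t\ge0$, that $\Phi_n^{(m)}(ct)\le c\,\Phi_n^{(m)}(t)$ when $0<c\le1$, and $\Phi_n^{(m)}(ct)\le C_{m,n}\,c\,(\log^+c+e)^{(n-1)m}\,\Phi_n^{(m)}(t)$ when $c\ge1$. Applying this with $c=W^{\beta/m}$ (so $c^m=W^\beta$, and $\log^+c=\tfrac{\beta}{m}\log^+W$ when $W\ge1$), then integrating and multiplying over $i$, in both regimes $W<1$ and $W\ge1$ one arrives at
\[
\prod_{i=1}^m\int_{\Rn}\Phi_n^{(m)}\big(W^{\beta/m}|f_i|\big)\,dx\ \le\ C_{m,n}^{\,m}\,W^{1+\beta}\,\big(\tfrac{\beta}{m}\log^+W+e\big)^{(n-1)m^2}.
\]
Now raise the bound of the first display to the power $m-\alpha/n$ and insert this estimate. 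The algebraic identity $(1+\beta)\cdot\tfrac{m-\alpha/n}{m}=1$ collapses the power of $W$ back to $W^1$, while the exponent of the logarithmic factor gets multiplied by $\tfrac{m-\alpha/n}{m}\le1$ and so does not grow; a routine comparison of polynomials in $x=\log^+W$ (both sides grow like $x^{(n-1)m^2}$ as $x\to\infty$, with positive leading constant on the right since $\alpha>0$, and the right side is $\ge1$) then shows $(\tfrac{\beta}{m}x+e)^{(n-1)m^2}\le C_{m,n,\alpha}\,[\,1+(\tfrac{\alpha}{mn}x)^{n-1}\,]^{m^2}$. Since $[\,1+(\tfrac{\alpha}{mn}\log^+W)^{n-1}\,]^{m^2}W=\prod_{i=1}^m[\,1+(\tfrac{\alpha}{mn}\log^+W)^{n-1}\,]^m\int_{\Rn}\Phi_n^{(m)}(|f_i|)\,dx$, this is exactly the asserted inequality for $\lambda=1$, and undoing the normalization finishes the proof.

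The conceptual obstacle is the first paragraph: recognizing that the factor $|R|^{\alpha/n}$ --- the only genuinely new feature relative to \eqref{Endpoint-Grafakos} --- is harmless once it is quarantined by the constraint $|R|^{\,m-\alpha/n}<\prod_i\int_R|f_i|\le W$, which converts the problem into the already available case $\alpha=0$. Everything after that is bookkeeping: the dilation behaviour of $\Phi_n^{(m)}$, and checking that the logarithmic power produced by the argument matches the exponent $n-1$ inside the bracket of the statement. As a sanity check on the parameter range: when $\alpha\downarrow0$ we have $\beta\downarrow0$ and $W^{\beta/m}\to1$, so the logarithmic factor correctly disappears and one simply recovers \eqref{Endpoint-Grafakos}; when $\alpha\uparrow mn$ we have $\beta\to\infty$ and $C_{m,n,\alpha}\to\infty$, which the statement permits.
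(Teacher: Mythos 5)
Your argument is correct, and it follows a genuinely different route from the paper's. The paper re-runs the C\'ordoba--Fefferman covering argument from scratch: it covers the level set by rectangles, passes to a sparse subfamily with $\exp L^{1/(n-1)}$-overlap, works in the Orlicz norm $\|\cdot\|_{\Phi_n,\widetilde E}$ via Lemma~\ref{Phi-Phi-m}, uses submultiplicativity of $\Phi_n^{(m)}$ to peel off the factor $\Phi_n^{(m)}(|\widetilde E|^{\alpha/mn})$, and then bootstraps the resulting bound on $|\widetilde E|$ to control $\log^+|\widetilde E|^{\alpha/mn}$. You instead treat the non-fractional endpoint estimate~\eqref{Endpoint-Grafakos} as a black box: the observation that any witnessing rectangle must satisfy $|R|^{m-\alpha/n}<\prod_i\int_R|f_i|\le W$ forces $\mathcal{M}_{\mathcal{R}}(\vec f)>W^{-\beta}$ on the level set, which converts the problem into the already-known case $\alpha=0$ at a shifted threshold; the rest is the same dilation calculus for $\Phi_n^{(m)}$ (submultiplicativity plus $\Phi_n^{(m)}(c)\lesssim c[1+(\log^+c)^{n-1}]^m$) that the paper also relies on, followed by the algebraic cancellation $(1+\beta)(m-\alpha/n)/m=1$. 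Your version is more modular and shorter, and it makes transparent \emph{why} the logarithmic correction appears (the a priori size bound on $R$); what it does not do is reprove the covering-lemma step, so it is not self-contained in the way the paper's proof is. One small presentational point: your inclusion $\{\mathcal{M}_{\mathcal{R},\alpha}(\vec f)>1\}\subseteq\{\mathcal{M}_{\mathcal{R}}(\vec f)>W^{-\beta}\}$ degenerates when $\alpha=0$ (then $\beta=0$ and the two sets coincide), which is consistent but worth flagging; similarly the polynomial comparison $(\tfrac{\beta}{m}x+e)^{(n-1)m^2}\lesssim_{m,n,\alpha}\bigl[1+(\tfrac{\alpha}{mn}x)^{n-1}\bigr]^{m^2}$ is indeed uniform on $[0,\infty)$ because the ratio tends to the finite positive constant $(m-\alpha/n)^{-(n-1)m^2}$ at infinity, but stating that explicitly would remove any ambiguity about the dependence of the implicit constant on $\alpha$.
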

%%%%%%%%%%%%%%%%% end Theorem 2.1 %%%%%%%%%%%%%%%%%%%%%%%%%%%%%%%

%%%%%%%%%%%%%%%%% Remark 2.1 %%%%%%%%%%%%%%%%%%%%%%%%%%%%%%%
\begin{remark}
If $m=1$ and $\alpha=0,$ then the above inequality in Theorem $\ref{Theorem endpoint}$ coincides with the inequality $(\ref{endpoint-JMZ})$. In the multilinear setting, if $\alpha=0$, Theorem $\ref{Theorem endpoint}$ recovers the inequality $(\ref{Endpoint-Grafakos})$. Therefore, Theorem \ref{Theorem endpoint} extends not only the linear result given by Jessen, Marcinkiewicz and Zygmund \cite{JMZ} but also extends the multilinear result obtained by Grafakos et. al. \cite{GLPT}. Even in the linear setting, Theorem $\ref{Theorem endpoint}$ is completely new for $0<\alpha<n$.
\end{remark}
%%%%%%%%%%%%%%%%% end Remark 2.1 %%%%%%%%%%%%%%%%%%%%%%%%%%%%%%%
In order to state the other results, we need to introduce one more definition.
%%%%%%%%%%%%%%%%%%%%%%%%%%%%%%%%%%%
%%%%%%%%%%%%%%%%% Dfinition 2.2 %%%%%%%%%%%%%%%%%%%%%%%%%%%%%%%
\begin{definition}[\cite{L-Luque}]
Let $1<p<\infty$. A Young function $\Phi$ is said to satisfy the $B^*_p$
condition, written $\Phi \in B^*_p$, if there is a positive constant $c$
such that the following inequality holds
$$
\int_c^\infty \frac{\Phi_n(\Phi(t))}{t^p} \frac{dt}{t} < \infty,
$$
where $\Phi_n(t):=t[1+(\log ^+ t)^{n-1}]$ for all $t>0$.
\end{definition}
%%%%%%%%%%%%%%%%% end Dfinition 2.2 %%%%%%%%%%%%%%%%%%%%%%%%%%%
%%%%%%%%%%%%%%%%%  Theorem 2.2 %%%%%%%%%%%%%%%%%%%%%%%%%%%%%%%
We obtain the two weighted, vector-valued estimate of $M_{\mathcal{R}}$ as follows:
\begin{theorem}\label{Theorem lq}
Let $1 < q < p < \infty$, $r=p/q$. Assume that $A$ and $B$ are Young functions
such that their complementary Young functions $\bar{A}$ and $\bar{B}$ satisfy
$\bar{A} \in B_{r'}^*$ and $\bar{B} \in B_q^*$, respectively. Let $(w,v)$ be
a couple of weights such that
\begin{equation}\label{A-B}
\sup_{R \in \mathcal{R}} \big\| w^q \big\|_{A,R}^{1/q}  \
\big\| v^{-1} \big\|_{B,R} < \infty.
\end{equation}
For some fixed $\gamma \in (0,1)$
and any nonnegative function $h \in L^{r'}(\Rn)$ with $||h||_{L^{r'}(\Rn)}=1$, assume that
$M_{\mathcal{R}}$ satisfies $(A_{\mathcal{R},\gamma,h})$ condition and
$(A_{\mathcal{R},\gamma,w^q h})$ condition.
Then, the following two weighted, vector-valued inequality holds for $M_{\mathcal{R}}$,
$$
\big\| M_{\mathcal{R}}f \big\|_{L^p(\ell^q,w^p)}
\lesssim \big\| f \big\|_{L^p(\ell^q, v^p)}.
$$
\end{theorem}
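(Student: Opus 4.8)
The plan is to deduce the vector-valued bound from a scalar two-weight bound for $M_{\mathcal{R}}$ by a standard linearization/extrapolation argument, together with the Tauberian hypotheses which replace the usual $A_\infty$ requirement. First I would reduce matters to the scalar inequality
$$
\big\| M_{\mathcal{R}} g \big\|_{L^p(w^p)} \lesssim \big\| g \big\|_{L^p(v^p)},
$$
valid under the bump condition \eqref{A-B} and the two Tauberian conditions $(A_{\mathcal{R},\gamma,h})$, $(A_{\mathcal{R},\gamma,w^q h})$. Indeed, writing $\| M_{\mathcal{R}} f \|_{L^p(\ell^q,w^p)}^q = \| \sum_k (M_{\mathcal{R}} f_k)^q \|_{L^r(w^p)}$ with $r=p/q>1$, I would dualize the $L^r(w^p)$ norm against a nonnegative $h \in L^{r'}(w^p)$ (normalized, and this is exactly where the hypotheses on $h$ enter), so that everything is reduced to estimating $\sum_k \int (M_{\mathcal{R}} f_k)^q\, h\, w^p$. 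The subadditivity $(M_{\mathcal{R}} f_k)^q$ is not linear, but the sublinearity of $M_{\mathcal{R}}$ together with the Carleson-type/good-$\lambda$ or the Rubio de Francia iteration scheme lets one pass from the scalar weighted bound to the $\ell^q$-valued one; this is the mechanism by which \cite{CG}, \cite{L-Luque} obtain vector-valued strong maximal inequalities.

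The core is therefore the scalar estimate, and here I would follow the Liu–Luque approach \cite{L-Luque}: the Tauberian condition $(A_{\mathcal{R},\gamma,\mu})$ for $M_{\mathcal{R}}$ with respect to the measures $\mu = h$ and $\mu = w^q h$ is exactly what is needed to run a Calderón–Zygmund-type selection of rectangles à la Córdoba–Fefferman, but now against an arbitrary measure rather than Lebesgue measure. Concretely, for a level set $\{M_{\mathcal{R}} g > \lambda\}$ I would extract a subfamily of rectangles with controlled overlap in $L^2$ of the relevant measure, then distribute the $L^p$ mass using Hölder's inequality with the Young functions $A$, $B$ and their conjugates: the generalized Hölder inequality $\frac{1}{|R|}\int_R |fg| \lesssim \|f\|_{A,R}\|g\|_{\bar A, R}$ converts the averages appearing in $M_{\mathcal{R}}$ into the bump quantities in \eqref{A-B}, while the $B^*_p$ conditions on $\bar A$ and $\bar B$ guarantee that the resulting Orlicz maximal operators $M_{\bar A}$, $M_{\bar B}$ are bounded on the appropriate $L^{r'}$ and $L^q$ spaces (this is the content of the $B^*_p$ condition — it is the Pérez-type criterion for boundedness of Orlicz maximal operators, adapted to the strong-maximal scale via $\Phi_n$).

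I expect the main obstacle to be the interaction between the multi-parameter geometry and the arbitrary-measure covering. Unlike the Hardy–Littlewood setting, rectangles do not satisfy a Besicovitch covering lemma, so the exponential-in-$n$ loss in \eqref{endpoint-JMZ} must be absorbed; here the Tauberian hypothesis is precisely the substitute that makes the Córdoba–Fefferman iteration converge, and one must check that it can be applied simultaneously to both $h$ and $w^q h$ at each stage of the selection without the constants blowing up. A secondary technical point is bookkeeping the exponents: one needs $\bar A \in B_{r'}^*$ (governing the $w^q$ side through the $L^{r'}$ duality with exponent $r=p/q$) and $\bar B \in B_q^*$ (governing the $v^{-1}$ side at the exponent $q$), and verifying that the product of the two Orlicz-norm bounds closes up to give the claimed $L^p(\ell^q)$ estimate rather than merely a restricted weak-type version. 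Once the scalar bound is in hand with constants depending only on $\gamma$, the bump constant in \eqref{A-B}, and the Tauberian constants, the vector-valued passage is routine.
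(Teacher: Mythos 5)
Your proposal correctly identifies the main ingredients — duality in $L^r$, the Jawerth/Córdoba–Fefferman selection algorithm driven by the Tauberian condition, the O'Neil generalized Hölder inequality with the Young functions $A$, $B$, and the $B^*_p$ criterion for boundedness of the associated Orlicz strong maximal operators — but the structural plan of first proving the scalar bound $\|M_{\mathcal{R}}g\|_{L^p(w^p)}\lesssim\|g\|_{L^p(v^p)}$ and then passing to $\ell^q$ by a ``routine'' Carleson/good-$\lambda$/Rubio de Francia argument has a genuine gap. For the strong maximal operator there is no general Fefferman--Stein inequality $\int (M_{\mathcal{R}}f)^q g\lesssim\int |f|^q M_{\mathcal{R}}g$; it holds only under a Tauberian condition on $g$ (Theorem 2.1 of Liu--Luque), and Rubio de Francia extrapolation requires a scalar estimate valid for a whole family of weights, which the bump hypothesis \eqref{A-B} does not supply. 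None of the mechanisms you name closes the vector-valued passage from a single two-weight scalar bound.

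The paper sidesteps this by never reducing to a scalar bound. It dualizes $\|M_{\mathcal{R}}f\|_{L^p(\ell^q,w^p)}^q$ against $h$ in the \emph{unweighted} $L^{r'}(dx)$ (not $L^{r'}(w^p\,dx)$ as you propose — this is important, because it is precisely what makes the dual quantity $\sum_j\int (M_{\mathcal{R}}f_j)^q w^q h\,dx$ match the hypothesis $(A_{\mathcal{R},\gamma,w^q h})$), then runs the selection algorithm separately on each truncated quantity $\Lambda_{j,N}$ with respect to the measure $w^q h$. The Hölder/bump manipulation leaves the weight $v$ absorbed inside the Orlicz average, yielding $\sum_j\Lambda_{j,N}\lesssim\int\big(\sum_j M_{\mathcal{R},\bar B}(f_jv)^q\big)\,M_{\mathcal{R},\bar A}h\,dx$. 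After Hölder, what is then needed is not a weighted vector-valued estimate for $M_{\mathcal{R}}$ at all, but the \emph{unweighted} $L^p(\ell^q)$ bound for $M_{\mathcal{R},\bar B}$ (Proposition \ref{pro-vector}), itself proved by a second duality together with the $B^*_q$ condition and the Fefferman--Stein inequality for $M_{\mathcal{R},\Phi}$ under the Tauberian hypothesis, plus the scalar $L^{r'}$ bound for $M_{\mathcal{R},\bar A}$ from $\bar A\in B^*_{r'}$. If you revise your plan to keep the $\ell^q$ structure through the selection and supply the vector-valued Orlicz lemma as the closing step, the argument closes; as written, the vector-valued passage is asserted but not provided.
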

%%%%%%%%%%%%%%%%% end Theorem 2.2 %%%%%%%%%%%%%%%%%%%%%%%%%%%%%%%
%%%%%%%%%%%%%%%%% Remark 2.2 %%%%%%%%%%%%%%%%%%%%%%%%%%%%%%%
\begin{remark}
Theorem \ref{Theorem lq} was shown by
P\'{e}rez \cite{Perez1999}, whenever the family of rectangles $\mathcal{R}$ is replaced by cubes. Moreover, in the scalar-valued case,
it was proved by Liu and Luque \cite{L-Luque}.
\end{remark}
%%%%%%%%%%%%%%%%%%%%%%% end Remark 2.2 %%%%%%%%%%%%%%%%%%%%%%%%%%%%%%%
In order to establish the boundedness of the multilinear fractional strong
maximal operator $\mathcal{M}_{\mathcal{R},\alpha}$, we give the definition of
the corresponding multiple weights.
%%%%%%%%%%%%%%%%% Dfinition 2.4 %%%%%%%%%%%%%%%%%%%%%%%%%%%
\begin{definition}[{Class of $A_{(\vec{p},q),\mathcal{R}}$}, \cite{CXY}]
Let$ 1 < p_1, \cdots, p_m < \infty$, $\frac{1}{p} = \frac{1}{p_{1}} + \cdots
+ \frac{1}{p_{m}}$, and $q > 0$.
Suppose that $\vec{w} = (w_1, \cdots, w_m)$ and each $w_i$ is a nonnegative
locally integrable function on $\Rn$.
We say that $\vec{w}$ satisfies the $A_{({\vec{p}},q),\mathcal{R}}$ condition
or $\vec{w} \in A_{(\vec{p},q),\mathcal{R}}$, if it satisfies
$$
[\vec{w}]_{A_{(\vec{p},q),\mathcal{R}}}
:= \sup_R \left( \frac{1}{|R|} \int_R \nu_{\vec{w}}^q dx \right)^{1/q}
\prod_{i=1}^m\left( \frac{1}{|R|} \int_R w_i^{-p_i'} dx \right)^{1/{p_i'}}
< \infty,
$$
where $\nu_{\vec{w}} = \prod_{i=1}^m w_i$. If $p_i = 1$,
$(\frac{1}{R} \int_R w_i^{1-p_i'})^{1/{p_i'}}$ is understood as
$(\inf_R w_i)^{-1}$.
\end{definition}
%%%%%%%%%%%%%%%%% end Dfinition 2.4 %%%%%%%%%%%%%%%%%%%%%%%%%%%
We formulate the weighted results of $ \mathcal{M}_{\mathcal{R},\alpha}$ in the following characterizations:
%%%%%%%%%%%%%%%%%%%% Theorem 2.3 %%%%%%%%%%%%%%%%%%%%%%
\begin{theorem}\label{Theorem one-weight}
Let $k \in \N$, $0 \leq \alpha < mn$, $\frac{1}{p} = \frac{1}{p_{1}} + \cdots + \frac{1}{p_{m}}$ with $1 < p_1, \cdots, p_m < \infty$,
and $0 < p \leq q < \infty$ satisfying $\frac{1}{q} = \frac{1}{p} - \frac{\alpha}{n}$.
Then the following statements are equivalent :
\begin{eqnarray}
\label{one-weight-R} &{}& \vec{w} \in A_{({\vec{p}},q),\mathcal{R}} ;\\
\label{one-weight-R-r} &{}&  \vec{w}^r \in A_{(\frac{\vec{p}}{r},\frac{q}{r}),\mathcal{R}}, \ for \ some \ r>1 ;\\
\label{M-R-a} &{}& \mathcal{M}_{\mathcal{R},\alpha} :  L^{p_1}(w_1^{p_1}) \times \cdots \times L^{p_m}(w_m^{p_m}) \rightarrow L^q(\nu_{\vec{w}}^q); \\
\label{M-R-a-Phi} &{}& \mathcal{M}_{\mathcal{R},\alpha,\Phi_{k+1}} : L^{p_1}(w_1^{p_1}) \times \cdots \times L^{p_m}(w_m^{p_m}) \rightarrow L^q(\nu_{\vec{w}}^q).
\end{eqnarray}
\end{theorem}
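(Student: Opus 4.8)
\emph{Proof plan.} The plan is to obtain all of the equivalences from the implications
\[
\eqref{M-R-a-Phi}\Rightarrow\eqref{M-R-a}\Rightarrow\eqref{one-weight-R},\qquad
\eqref{one-weight-R}\Longleftrightarrow\eqref{one-weight-R-r},\qquad
\eqref{one-weight-R}\Rightarrow\eqref{M-R-a-Phi},
\]
which close the circle. Two of these cost nothing: $\eqref{M-R-a-Phi}\Rightarrow\eqref{M-R-a}$ follows from the pointwise domination $\mathcal{M}_{\mathcal{R},\alpha}(\vec{f})\le\mathcal{M}_{\mathcal{R},\alpha,\Phi_{k+1}}(\vec{f})$, since each Luxemburg average $\|f_i\|_{\Phi_{k+1},R}$ dominates $\frac{1}{|R|}\int_R|f_i|$; and $\eqref{M-R-a}\Rightarrow\eqref{one-weight-R}$ is the usual necessity test — insert $\vec{f}=(w_1^{-p_1'}\mathbf{1}_R,\dots,w_m^{-p_m'}\mathbf{1}_R)$ (truncated if some $\int_R w_i^{-p_i'}=\infty$) into the assumed estimate, use $\mathcal{M}_{\mathcal{R},\alpha}(\vec{f})\ge|R|^{\alpha/n}\prod_i\frac{1}{|R|}\int_R w_i^{-p_i'}$ on $R$ together with $\frac{\alpha}{n}+\frac{1}{q}-\frac{1}{p}=0$, and read off $[\vec{w}]_{A_{(\vec{p},q),\mathcal{R}}}<\infty$. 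The reverse implication $\eqref{one-weight-R-r}\Rightarrow\eqref{one-weight-R}$ is immediate from Jensen's inequality, because $r$-bumped averages dominate unbumped ones.

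The first genuine step is the self-improvement $\eqref{one-weight-R}\Rightarrow\eqref{one-weight-R-r}$, whose engine is a reverse Hölder inequality adapted to the basis $\mathcal{R}$: every $u\in A_{\infty,\mathcal{R}}$ satisfies $\big(\frac{1}{|R|}\int_R u^{\delta}\big)^{1/\delta}\le C\,\frac{1}{|R|}\int_R u$ for all $R\in\mathcal{R}$, with $\delta=\delta(u)>1$. I would prove this by the coordinatewise route: a weight in $A_{\infty,\mathcal{R}}$ lies in $A_{t,\mathcal{R}}$ for some finite $t$, and is then, uniformly, a one-dimensional $A_t$ weight in each of the $n$ variables, hence uniformly $RH_{1+\varepsilon}$ in each variable; one then applies the one-dimensional reverse Hölder inequality in $x_1,\dots,x_n$ in turn, using at each stage that averaging in one variable preserves the reverse Hölder property in the remaining ones (Minkowski's integral inequality), to reach the rectangular inequality with a uniform exponent. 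Now $\vec{w}\in A_{(\vec{p},q),\mathcal{R}}$ forces each $\sigma_i:=w_i^{-p_i'}\in A_{\infty,\mathcal{R}}$ (a standard consequence of the multiple condition; cf.\ \cite{CXY}); pick $r>1$ so close to $1$ that $s_i:=\frac{r(p_i-1)}{p_i-r}$ stays below the reverse Hölder exponent of every $\sigma_i$. Since $\nu_{\vec{w}^{\,r}}^{\,q/r}=\nu_{\vec{w}}^{\,q}$, the numerator factor of $[\vec{w}^{\,r}]_{A_{(\vec{p}/r,q/r),\mathcal{R}}}$ needs no improvement, while the reverse Hölder inequality converts each $\big(\frac{1}{|R|}\int_R\sigma_i^{s_i}\big)^{1/(p_i/r)'}$ into a constant multiple of $\big(\frac{1}{|R|}\int_R\sigma_i\big)^{r/p_i'}$; multiplying out gives $[\vec{w}^{\,r}]_{A_{(\vec{p}/r,q/r),\mathcal{R}}}\lesssim[\vec{w}]_{A_{(\vec{p},q),\mathcal{R}}}^{\,r}$.

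For $\eqref{one-weight-R}\Rightarrow\eqref{M-R-a-Phi}$ the same reverse Hölder room is spent absorbing the bump inside the operator. Writing $\mathcal{M}_{\mathcal{R},\alpha,\Phi_{k+1}}(\vec{f})(x)=\sup_{R\ni x}|R|^{\alpha/n}\prod_i\|f_i\|_{\Phi_{k+1},R}$ and splitting $f_i=(f_iw_i)\cdot w_i^{-1}$, the generalized Hölder inequality in Orlicz spaces gives $\|f_i\|_{\Phi_{k+1},R}\le 2\big(\frac{1}{|R|}\int_R|f_iw_i|^{p_i}\big)^{1/p_i}\|w_i^{-1}\|_{B_i,R}$, where $B_i$ is a Young function with $B_i^{-1}(t)\approx\Phi_{k+1}^{-1}(t)\,t^{-1/p_i}$, so $B_i(t)\approx t^{p_i'}\big(\log(e+t)\big)^{kp_i'}\lesssim_{\varepsilon}t^{p_i'(1+\varepsilon)}$. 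Taking $1+\varepsilon$ below the reverse Hölder exponent of $\sigma_i$ (legitimate since $\sigma_i\in A_{\infty,\mathcal{R}}$) yields $\|w_i^{-1}\|_{B_i,R}\lesssim\big(\frac{1}{|R|}\int_R\sigma_i^{1+\varepsilon}\big)^{1/(p_i'(1+\varepsilon))}\lesssim\big(\frac{1}{|R|}\int_R\sigma_i\big)^{1/p_i'}$; feeding this and $\eqref{one-weight-R}$ back in leaves the pointwise bound
\[
\mathcal{M}_{\mathcal{R},\alpha,\Phi_{k+1}}(\vec{f})(x)\ \lesssim\ \sup_{R\ni x}|R|^{\alpha/n}\Big(\tfrac{1}{|R|}\int_R\nu_{\vec{w}}^{\,q}\Big)^{-1/q}\prod_{i=1}^m\Big(\tfrac{1}{|R|}\int_R|f_iw_i|^{p_i}\Big)^{1/p_i}.
\]
The $L^{p_1}(w_1^{p_1})\times\dots\times L^{p_m}(w_m^{p_m})\to L^q(\nu_{\vec{w}}^{\,q})$ bound for the right-hand side is a linearized estimate of the same nature as the two-weight inequality for $\mathcal{M}_{\mathcal{R},\alpha}$ proved in \cite{CXY}, and follows by that argument, which applies here because $\eqref{one-weight-R}$ holds and each $\sigma_i\in A_{\infty,\mathcal{R}}$ satisfies the dyadic reverse doubling condition.

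I expect the main obstacle to be the rectangular reverse Hölder inequality with a \emph{uniform} exponent — the reduction to uniform one-dimensional $A_t$ behaviour and the stability of reverse Hölder under partial averaging — and, in $\eqref{one-weight-R}\Rightarrow\eqref{M-R-a-Phi}$, the Orlicz-function bookkeeping: describing $B_i$ (equivalently the complementary function of $\Phi_{k+1}$) precisely enough and matching the order $k$ of the logarithm against the weight-dependent reverse Hölder exponent, so that the room genuinely swallows the $\Phi_{k+1}$-bump uniformly over all $R\in\mathcal{R}$ before one falls back on the two-weight bound of \cite{CXY}.
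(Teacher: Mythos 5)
There is a genuine gap in your proof of $(\ref{one-weight-R})\Rightarrow(\ref{M-R-a-Phi})$. Your generalized H\"older splitting $\|f_i\|_{\Phi_{k+1},R}\le 2\big(\tfrac{1}{|R|}\int_R|f_iw_i|^{p_i}\big)^{1/p_i}\|w_i^{-1}\|_{B_i,R}$ spends \emph{all} of the reverse H\"older room on the weight factor, leaving the function factor at the critical $L^{p_i}$-average. After feeding in $(\ref{one-weight-R})$, you arrive at a pointwise bound by
\[
S(\vec{f})(x)=\sup_{R\ni x}\Big(\tfrac{1}{|R|}\int_R\nu_{\vec{w}}^q\Big)^{-1/q}|R|^{\alpha/n}\prod_{i=1}^m\Big(\tfrac{1}{|R|}\int_R|f_iw_i|^{p_i}\Big)^{1/p_i},
\]
and claim the required $L^{p_1}(w_1^{p_1})\times\cdots\to L^q(\nu_{\vec{w}}^q)$ estimate for $S$ follows ``by the argument of \cite{CXY}.'' It does not, and in fact the estimate for $S$ is false already in the trivial case $m=1$, $w\equiv 1$: it becomes $\|M_{\mathcal{R},p\alpha}(|f|^p)\|_{L^{q/p}}\lesssim\|f\|_{L^p}^p$, a strong-type $(L^1,L^{q/p})$ bound for a fractional strong maximal operator, which fails (only a weak-type inequality at $L^1$ is available). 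The structural problem is that the multilinear maximal operator $\sup_{R}\prod_i\big(\tfrac{1}{|R|}\int_R|g_i|^{p_i}\big)^{1/p_i}$ does \emph{not} map $L^{p_1}\times\cdots\times L^{p_m}$ to $L^p$; you need the function factor to carry an Orlicz average that grows \emph{strictly slower} than $t^{p_i}$.

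The paper avoids this by a three-function factorization $\mathcal{A}_i^{-1}\mathcal{C}_i^{-1}\approx\Phi_{k+1}^{-1}$, taking $\mathcal{A}_i(t)=t^{r_ip_i'}$ (just enough room to be absorbed by the reverse H\"older of $\sigma_i$) and $\mathcal{C}_i(t)\approx[t(1+\log^+t)^k]^{(r_ip_i')'}$ with $(r_ip_i')'<p_i$ strictly, so that $\mathcal{C}_i\in B_{p_i}^*$ and hence $M_{\mathcal{R},\mathcal{C}_i}$ is bounded on $L^{p_i}$ by Proposition 2.2 of \cite{L-Luque}. The rest of the argument then goes through the paper's Theorem \ref{Theorem M-B}: the covering/condition-$(A)$ machinery for $\nu_{\vec{w}}^q$ (which you also do not verify --- the paper gets it from Theorem \ref{weight-2} and Corollary 4.8 of \cite{HLP}) reduces matters to the boundedness of $\mathcal{M}_{\mathcal{R},\overrightarrow{\mathcal{C}}}:L^{p_1}\times\cdots\times L^{p_m}\to L^p$, which is exactly what the sub-$p_i$ growth of $\mathcal{C}_i$ buys. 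Your argument for the remaining implications --- the easy pointwise domination $(\ref{M-R-a-Phi})\Rightarrow(\ref{M-R-a})$, the necessity test for $(\ref{M-R-a})\Rightarrow(\ref{one-weight-R})$, Jensen for $(\ref{one-weight-R-r})\Rightarrow(\ref{one-weight-R})$, and the coordinatewise reverse H\"older route to $(\ref{one-weight-R})\Rightarrow(\ref{one-weight-R-r})$ --- is sound.
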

%%%%%%%%%%%%%% end Theorem 2.3 %%%%%%%%%%%%%%%%%%%%%%%%%%%%%%%%
%%%%%%%%% Remark 2.5 %%%%%%%%%%%%%%%%%%%%%%%%%%%%%%%%%%%%%%
\begin{remark}
Although the fact that $(\ref {one-weight-R})$ is equivalent with $(\ref {M-R-a})$ was given in \cite{CXY}, we here present some new ingredients. In addition, Theorem \ref{Theorem one-weight} also tells us that the weights class $\vec{w} \in A_{({\vec{p}},q),\mathcal{R}}$ not only implies the boundedness of $\mathcal{M}_{\mathcal{R},\alpha}$ but also characterizes much more bigger operators $\mathcal{M}_{\mathcal{R},\alpha,\Phi_{k+1}}$.
\end{remark}
%%%%%%%%%%%%%% end Remark 2.5  %%%%%%%%%%%%%%%%%%%%%%%%%%
%%%%%%%%%%% Theorem 2.4 %%%%%%%%%%%%%%%%%%%%%%%%%%%%%%%%%%%%%%
Further more, we obtain the following result:
\begin{theorem}\label{Theorem power-R}
Let $0 \leq \alpha < mn$, $\frac{1}{p}
= \frac{1}{p_{1}} + \cdots + \frac{1}{p_{m}}$ with
$1 < p_1, \cdots, p_m < \infty$,
and $0 < p \leq q < \infty$. If $(\vec{w},v)$ are weights such that
$v \in A_{\infty,\mathcal{R}}$ and the power bump condition holds for some
$ r > 1 $,
\begin{equation}\label{power condition}
\sup_{R \in \mathcal{R}} |R|^{\frac{\alpha}{n} + \frac{1}{q} -\frac{1}{p}}
\left( \frac{1}{|R|} \int_R v dx \right)^{\frac{1}{q}}
\prod_{i=1}^m \left( \frac{1}{|R|} \int_R w_i^{(1-p_i')r} dx
\right)^{\frac{1}{r p_i'}} < \infty,
\end{equation}
then $\mathcal{M}_{\mathcal{R},\alpha} : L^{p_1}(w_1) \times \cdots
\times L^{p_m}(w_m) \rightarrow L^q(v)$.
\end{theorem}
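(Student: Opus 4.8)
The plan is to reduce the two-weight power-bump estimate to the one-weight characterization already established in Theorem \ref{Theorem one-weight}, via a Rubio de Francia style iteration algorithm that is adapted to the rectangle basis and to the fractional scaling. First I would fix the exponent $r>1$ for which the power bump condition \eqref{power condition} holds and pass to the conjugate reformulation: write $\sigma_i := w_i^{1-p_i'}$, so that the bump becomes $\sup_R |R|^{\alpha/n + 1/q - 1/p}\big(\frac{1}{|R|}\int_R v\big)^{1/q}\prod_i \big(\frac{1}{|R|}\int_R \sigma_i^r\big)^{1/(rp_i')} < \infty$. The strategy is then to build, out of $v$ and the $\sigma_i$'s, a new tuple of weights $\vec{u} = (u_1,\dots,u_m)$ and a dominating weight $V$ such that (i) $\vec{u}$ lies in a genuine $A_{(\vec{p},q),\mathcal{R}}$ class with a controlled constant, (ii) $v \le V$ pointwise (so the $L^q(v)$ norm is dominated by the $L^q(V^q)$ norm with $V = \nu_{\vec{u}}$), and (iii) each $w_i$ is comparable to $u_i^{p_i}$ in the sense needed to bound the input norms. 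Once such $\vec u$ is produced, Theorem \ref{Theorem one-weight} applied to $\mathcal{M}_{\mathcal{R},\alpha}$ with weights $\vec u$ finishes the argument.

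The construction of the iteration operator is the technical heart. I would introduce, for each $i$, the auxiliary maximal operator $N_i g := \big( M_{\mathcal{R}}(|g|^{r'})\big)^{1/r'}$ (an $L^{s}$-bounded operator for $s > r'$ on an appropriate weighted space, using that $M_{\mathcal{R}}$ is strong $(s,s)$ for all $s>1$ by García-Cuerva–Rubio de Francia), together with the "dual" maximal operator built from $M_{\mathcal{R}}$ acting against the weight $v$; since $v \in A_{\infty,\mathcal{R}}$, the strong maximal operator $M_{\mathcal{R}}$ is bounded on $L^{t}(v)$ for $t$ large, which is exactly the $A_\infty$ input we are allowed to use. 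Defining $\mathcal{R}_i g := \sum_{k\ge 0} 2^{-k} N_i^{(k)} g / \|N_i\|^k$ in the usual way gives a weight $\mathcal{R}_i g \ge |g|$ with $N_i(\mathcal{R}_i g) \lesssim \mathcal{R}_i g$, i.e. $\mathcal{R}_i g$ is an $A_{1,\mathcal{R}}$-type weight relative to the $r'$-power; analogously iterate with the $v$-weighted operator to produce the factor controlling $v$. Multiplying these pieces together and invoking Hölder (with the exponents dictated by $\frac1p = \sum \frac1{p_i}$ and $\frac1q = \frac1p - \frac\alpha n$) against the power bump \eqref{power condition} yields $[\vec u]_{A_{(\vec p,q),\mathcal{R}}} \lesssim \big(\text{bump constant}\big)^{\theta}$ for a suitable $\theta$. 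The role of the power $r$ in the bump is precisely to absorb the loss $\|N_i\|$-style constants coming from the $r'$-maximal operators.

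I expect the main obstacle to be the scaling bookkeeping for the fractional parameter $\alpha$: in the non-fractional case ($\alpha = 0$, $p = q$) the Rubio de Francia factors distribute cleanly, but with $\frac1q = \frac1p - \frac\alpha n$ and $0 < p \le q$ one must be careful that the $|R|^{\alpha/n + 1/q - 1/p}$ prefactor gets reproduced exactly when the individual averages are recombined — in particular the exponents on $\frac1{|R|}\int_R u_i^{-p_i'}$ must still sum correctly against $\big(\frac1{|R|}\int_R \nu_{\vec u}^q\big)^{1/q}$ after the algorithm. A secondary subtlety is that $A_{\infty,\mathcal{R}}$ for $v$ only gives $L^t(v)$ boundedness of $M_{\mathcal{R}}$ for sufficiently large $t$, so the iteration against $v$ must be run at a high enough exponent and then the resulting estimate interpolated/Hölder-ed back down; making the quantitative dependence on $[v]_{A_{\infty,\mathcal{R}}}$ explicit is not needed, only finiteness. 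Apart from these points the argument is a routine adaptation of the Pérez-type two-weight bump machinery (cf. \cite{Perez1999}, \cite{GLPT}, \cite{L-Luque}) to the strong/fractional setting, with Theorem \ref{Theorem one-weight} supplying the final one-weight step.
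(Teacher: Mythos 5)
Your proposal takes a genuinely different route from the paper, but it has a concrete gap. The paper's proof is very short: it records that $\mathcal{R}$ is a Muckenhoupt basis (Garc\'ia-Cuerva--Rubio de Francia), invokes the equivalence between $v\in A_{\infty,\mathcal{R}}$ and the Tauberian condition $(A_{\mathcal{R},\gamma,v})$ from Hagelstein--Luque--Parissis, and then applies Corollary~\ref{Theorem power-B}, which is itself a specialization of Theorem~\ref{Theorem M-B}. The engine of Theorem~\ref{Theorem M-B} is the Jawerth/C\'ordoba--Fefferman style covering/selection argument (Lemma~\ref{Lemma condition (A)}): one discretizes the level sets, distributes the covering rectangles over $\mu$ scattered subfamilies, and sums using the bump condition and the unweighted boundedness of the auxiliary Orlicz maximal operator. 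No Rubio de Francia iteration appears anywhere in the paper for this theorem.

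The main problem with your reduction is the exponent scope. Theorem~\ref{Theorem one-weight}, which you use as the final step, is stated only under the Sobolev-type constraint $\frac1q = \frac1p - \frac\alpha n$ (and indeed your sketch explicitly invokes that relation in the H\"older step). But Theorem~\ref{Theorem power-R} makes no such assumption: it allows any $0 < p \le q < \infty$, which is why the $|R|^{\alpha/n+1/q-1/p}$ prefactor appears in \eqref{power condition} and does not cancel. For exponents with $\frac1q \ne \frac1p - \frac\alpha n$ there is simply no one-weight statement of the form~\eqref{M-R-a} available to reduce to, so the strategy does not reach all cases covered by the theorem.

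Even in the constrained case the weight-insertion scheme as sketched needs real work that is not present. You would need to produce $\vec u$ with three simultaneous properties: $u_i^{p_i}\lesssim w_i$, $\nu_{\vec u}^q\gtrsim v$, and $[\vec u]_{A_{(\vec p,q),\mathcal{R}}}<\infty$. The power bump only gives you $r$-power room on the $w_i$ side; on the $v$ side there is no $r$-bump, and the $A_{\infty,\mathcal{R}}$ hypothesis on $v$ enters your argument only as strong-type boundedness of $M_{\mathcal{R}}$ on $L^t(v)$. Whether the Rubio de Francia majorants of the $\sigma_i$'s and of $v$ can be combined so that the joint $A_{(\vec p,q),\mathcal{R}}$ constant stays finite is exactly the point that needs to be verified, and it is more delicate than the linear Neugebauer-type insertion because the $u_i$'s are coupled through $\nu_{\vec u}$. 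As written, the proposal asserts rather than establishes this. In contrast, the paper sidesteps all of this by working directly at the level of covering families, where $v\in A_{\infty,\mathcal{R}}$ is used only as a Tauberian condition and the $r$-bump on the $w_i$'s is used only through the $L^p$-boundedness of the Orlicz maximal operator built from the gauges $t^{(p_i'r)'}$.
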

%%%%%%%%%%% end Theorem 2.4 %%%%%%%%%%%%%%%%%%%%%%%%%%%%%%%%%%%%
%%%%%%%%%%%%%%%%%%% Corollary 2.5 %%%%%%%%%%%%%%%%%%%%%%%%%%%%%%%%
\begin{corollary}\label{Theorem M-two-weight}
Suppose that $0 \leq \alpha < mn$, $\frac{1}{p}
= \frac{1}{p_{1}} + \cdots + \frac{1}{p_{m}}$ with
$1 < p_1, \cdots, p_m < mn/{\alpha}$. Let each $u_i$ be nonnegative locally
integrable function.
Then $\vec{u} \in A_{\vec{p},\mathcal{R}}$ implies that
$$
\big\| \mathcal{M}_{\mathcal{R},\alpha}(\vec{f}) \big\|_{L^p(v^p)}
\leq C \prod_{i=1}^m \big\| f_i \big\|_{L^{p_i}(w_i^{p_i})} ,
$$
where $v=\prod_{i=1}^m u_i^{1/{p_i}}$ and $w_i=M_{\alpha p_i/m}(u_i)$.
\end{corollary}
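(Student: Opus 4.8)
The plan is to derive Corollary \ref{Theorem M-two-weight} from Theorem \ref{Theorem power-R} by verifying its hypotheses for the specific choice $v = \prod_{i=1}^m u_i^{1/p_i}$ and $w_i = M_{\alpha p_i/m}(u_i)$, where $\vec{u} \in A_{\vec{p},\mathcal{R}}$. Note first a notational matching: in Theorem \ref{Theorem power-R} the target exponent satisfies only $0 < p \le q < \infty$, and we want to take $q = p$ here (the statement of the corollary has $\|\mathcal{M}_{\mathcal{R},\alpha}(\vec f)\|_{L^p(v^p)}$), so that $\frac{\alpha}{n} + \frac1q - \frac1p = \frac{\alpha}{n}$, and the power bump condition \eqref{power condition} we must check becomes
\begin{equation*}
\sup_{R \in \mathcal{R}} |R|^{\frac{\alpha}{n}}
\left( \frac{1}{|R|} \int_R v^p \, dx \right)^{\frac{1}{p}}
\prod_{i=1}^m \left( \frac{1}{|R|} \int_R w_i^{(1-p_i')r} \, dx
\right)^{\frac{1}{r p_i'}} < \infty.
\end{equation*}

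First I would dispose of the $A_\infty$ requirement: since $v^p = \prod_{i=1}^m u_i^{p/p_i}$ with $\sum p/p_i = 1$ is a multilinear product of the $A_{\vec p,\mathcal R}$ weights, the standard multilinear reverse-H\"older / $A_\infty$ property (the analogue for rectangles of the fact that $\nu_{\vec u} \in A_{mp,\mathcal R}$, hence in $A_{\infty,\mathcal R}$) gives $v^p \in A_{\infty,\mathcal{R}}$; this is a known structural consequence of the $A_{\vec p,\mathcal R}$ condition and I would cite the survey portion of this paper or \cite{CXY}. Second, and this is the heart of the matter, I would check the bump inequality. The key tool is the pointwise-on-average bound $\frac{1}{|R|}\int_R u_i \, dx \le C\, \inf_{x \in R} M_{\alpha p_i/m}(u_i)(x) \cdot |R|^{-\alpha p_i/(mn)}$ — more precisely, by definition of the fractional maximal operator, for every $x \in R$ one has $|R|^{\frac{\alpha p_i}{mn}} \frac{1}{|R|}\int_R u_i \le M_{\alpha p_i/m}(u_i)(x) = w_i(x)$, hence $\frac{1}{|R|}\int_R u_i \, dx \le |R|^{-\frac{\alpha p_i}{mn}}\, \essinf_R w_i$. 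Raising to the power $1/p_i$ and taking the product over $i$ yields, using $\sum 1/p_i = 1/p$ and $\sum \frac{\alpha p_i}{m n p_i} = \frac{\alpha}{n}$ — wait, more carefully $\sum_i \frac{1}{p_i}\cdot\frac{\alpha p_i}{mn} = \frac{\alpha}{n}$ — the bound $\prod_i\big(\frac{1}{|R|}\int_R u_i\big)^{1/p_i} \le |R|^{-\alpha/n} \prod_i (\essinf_R w_i)^{1/p_i}$. Meanwhile $\big(\frac{1}{|R|}\int_R v^p\big)^{1/p}$, by H\"older with exponents $p_i/p$, is bounded by $\prod_i \big(\frac{1}{|R|}\int_R u_i\big)^{1/p_i}$, so $|R|^{\alpha/n}\big(\frac{1}{|R|}\int_R v^p\big)^{1/p} \le \prod_i(\essinf_R w_i)^{1/p_i}$.

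It then remains to bound $\prod_i \big(\frac{1}{|R|}\int_R w_i^{(1-p_i')r}\big)^{1/(rp_i')}$ by $C \prod_i (\essinf_R w_i)^{-1/p_i}$ for a suitable $r > 1$; combining the two estimates gives exactly the bump bound with constant controlled by $[\vec u]_{A_{\vec p,\mathcal R}}$. For this last step I would invoke the reverse-H\"older inequality for $A_1$-type weights adapted to rectangles: the fractional maximal function $w_i = M_{\alpha p_i/m}(u_i)$, when $u_i \in A_{p_i,\mathcal R}$ — which follows from $\vec u \in A_{\vec p,\mathcal R}$ — satisfies a reverse-H\"older estimate over rectangles, so that for $r - 1$ small enough (depending on the $A_{\vec p,\mathcal R}$ constant and the dimension) $\big(\frac{1}{|R|}\int_R w_i^{(1-p_i')r}\big)^{1/r} \lesssim \frac{1}{|R|}\int_R w_i^{1-p_i'} \lesssim (\essinf_R w_i)^{1-p_i'}$, the last inequality because $w_i^{1-p_i'} = w_i^{-(p_i'-1)}$ and $w_i^{-1} \le (\essinf_R w_i)^{-1}$ pointwise on $R$; actually one wants the average of the negative power, which is bounded using that $w_i \in A_{\infty,\mathcal R}$ so $w_i^{-p_i'+1}$ is in a reverse-H\"older class too. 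Raising to $1/p_i'$ and multiplying gives the desired bound.

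The main obstacle is precisely the availability of a reverse-H\"older inequality for the rectangle-fractional maximal function $M_{\alpha p_i/m}(u_i)$ and, relatedly, the self-improvement $\vec u^r \in A_{\vec p/r,\mathcal R}$-type property in the strong (multi-parameter) setting — such reverse-H\"older estimates are subtle for rectangles because the underlying geometry lacks the doubling structure that makes the classical argument work. I would handle this either by appealing to the one-weight self-improvement already recorded in this paper (the equivalence \eqref{one-weight-R} $\Leftrightarrow$ \eqref{one-weight-R-r} in Theorem \ref{Theorem one-weight}, applied with the appropriate exponents and with $w_i$ replaced by the relevant fractional maximal functions), or by a direct argument using that $M_{\alpha p_i/m}(u_i)^{\delta} \in A_{1,\mathcal{R}}$ for small $\delta > 0$ (the rectangle analogue of the Coifman–Rochberg theorem, valid because $M_{\mathcal R}$ is bounded on $L^s(\Rn)$ for all $s>1$), which yields the needed reverse-H\"older bound for a power slightly above $1$, hence a valid choice of $r$. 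Once the bump condition and the $A_{\infty,\mathcal R}$ condition on $v^p$ are in hand, Theorem \ref{Theorem power-R} applied with $q = p$ delivers the conclusion immediately.
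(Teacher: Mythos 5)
Your overall strategy matches the paper's: invoke Theorem \ref{Theorem power-R} with $q=p$, obtain $v^p=\widehat{\nu}_{\vec u}\in A_{mp,\mathcal{R}}\subset A_{\infty,\mathcal{R}}$ from Theorem \ref{weight-1}, and verify the power bump condition \eqref{power condition}. The first half of your bump verification is also right: from the defining inequality $w_i(x)=M_{\alpha p_i/m}(u_i)(x)\ge |R|^{\alpha p_i/(mn)}\frac{1}{|R|}\int_R u_i$ for $x\in R$ and H\"older's inequality applied to $v^p=\prod_i u_i^{p/p_i}$, one gets
$|R|^{\alpha/n}\bigl(\frac{1}{|R|}\int_R v^p\bigr)^{1/p}\le\prod_i(\essi_R w_i)^{1/p_i}$, exactly as you wrote.

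The problem is that you then manufacture an obstacle that is not there. Since $(1-p_i')r<0$ for every $r>1$ and $w_i\ge\essi_R w_i$ a.e.\ on $R$, the map $t\mapsto t^{(1-p_i')r}$ is decreasing, so $w_i^{(1-p_i')r}\le(\essi_R w_i)^{(1-p_i')r}$ pointwise on $R$. Averaging and raising to the power $1/(rp_i')>0$ gives directly
\begin{equation*}
\left(\frac{1}{|R|}\int_R w_i^{(1-p_i')r}\,dx\right)^{\frac{1}{rp_i'}}
\le(\essi_R w_i)^{\frac{1-p_i'}{p_i'}}
=(\essi_R w_i)^{-\frac{1}{p_i}},
\end{equation*}
for \emph{any} $r>1$, with constant $1$. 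Multiplying this by the first estimate makes the supremum in \eqref{power condition} $\le 1$, and the corollary follows from Theorem \ref{Theorem power-R}. No reverse H\"older inequality for $w_i^{1-p_i'}$, no self-improvement of the form \eqref{one-weight-R-r}, and no Coifman--Rochberg-type $A_1$ result for the rectangle fractional maximal function are needed; in particular there is no constraint forcing $r-1$ to be small. Your discussion of how delicate reverse-H\"older estimates are in the multi-parameter setting is accurate in general, but it is a red herring here: the only inequality ever used about $w_i$ on $R$ is the trivial one $w_i\ge\essi_R w_i$, and the exponent arithmetic $p_i(1-p_i')/p_i'=-1$ does the rest. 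Eliminating that digression, the rest of your argument is correct and is essentially the paper's (very terse) proof spelled out.
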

%%%%%%%%%%%%%%%%%%% end Corollary 2.5 %%%%%%%%%%%%%%%%%%%%%%%%%%%%%%%%

%%%%%%%%%%%%%%%%%%%%%%%%%%%%%%%%%%%%%%%%%%%%%%%%%%%%%%%%%%%%%%%%%%%%%%%%%
%%%%%%%%%% Subsection   %%%%%%%%%%%%%%%%%%%%%%%%%%%%%%%%%
\subsection{The general basis and two weight norm inequalities}\label{Sec-general}
In this subsection, we will present some general results for the maximal
operator defined on the general basis. We start by introducing some
definitions and notations, which will be used later.

By a basis $\mathscr{B}$ in $\Rn$ we mean a collection of open sets in $\Rn$.
We say that $w$ is a weight associated with the basis $\mathscr{B}$ if $w$ is
a nonnegative measurable function in $\Rn$ such that
$w(B) = \int_B w(x) dx < \infty$ for each $B \in \mathscr{B}$. Moreover, $w \in A_{p,\mathscr{B}}$ means that
$$
\sup_{B \in \mathscr{B}} \bigg( \frac{1}{|B|} \int_B w dx \bigg)
\bigg( \frac{1}{|B|} \int_B w^{1-p'} dx \bigg)^{p/{p'}}< \infty.
$$
We say that $\mathscr{B}$ is a Muckenhoupt basis if
$M_{\mathscr{B}} : L^p(w) \rightarrow L^p(w)
$ for any $1 < p < \infty$ and
for any $w \in A_{p,\mathscr{B}}$.
%%%%%%%%%%%%%%%%%%%%%%%%%%%%%%%%%%
%%%%%%%%%%%%%%%%%%%%%%%%%%%%%%%%%%%%%%%%%%%

We also need some basic property of Orlicz spaces. More details can be found in \cite{RR}. A Young function is a continuous, convex, increasing function $\Phi : [0,\infty) \rightarrow [0,\infty)$ with $\Phi(0)=0$ and such that $\Phi(t)\rightarrow \infty$ as
$t \rightarrow \infty$. The $\Phi$-norm of a function $f$ over a set $E$ with finite measure is defined by
\begin{equation}\label{definition}
\big\| \Phi \big\|_{\Phi,E} = \inf \bigg\{\lambda>0; \frac{1}{|E|}\int_E \Phi \Big(\frac{|f(x)|}{\lambda}\Big) dx \leq 1 \bigg\}.
\end{equation}
For a given Young function $\Phi$, one can define a complementary function
$$
\bar{\Phi}(s)=\sup_{t>0} \{ st-\Phi(t)\}, \ \ s \geq 0.
$$
Moreover, the generalized H\"{o}lder inequality holds
\begin{equation}\label{Holder}
\frac{1}{|E|}\int_E |f(x)g(x)| dx \leq 2 \big\| f \big\|_{\Phi,E}\big\| f \big\|_{\bar{\Phi},E}.
\end{equation}

%%%%%%%%%%%%%%%%%%%%%%%%%%%%%%%%
\begin{definition}\label{def-M-B-varphi-Psi}
Suppose that the function $\varphi:(0,\infty) \rightarrow (0,\infty)$ is essentially non-decreasing and
$\lim_{t \rightarrow \infty} \frac{\varphi(t)}{t}=0$. Assume that $\mathscr{B}$ is a basis and $\{ \Psi_i \}_{i=1}^m$ is a sequence of Young functions, we define the multilinear Orlicz maximal operator associated with the function $\varphi$ by
\begin{equation*}
\mathcal{M}_{\mathscr{B},\varphi,\overrightarrow{\Psi}}(\vec{f})(x)=
\sup_{\substack{B \ni x \\ B \in \mathscr{B}}} \varphi(|B|) \prod_{i=1}^m \big\| f_i \big\|_{\Psi_i, B}, \ \ x\in \Rn.
\end{equation*}
In particular, if $\Psi_i(t)=t$, $i=1,\ldots,m$, we denote $\mathcal{M}_{\mathscr{B},\varphi,\overrightarrow{\Psi}}$ by $\mathcal{M}_{\mathscr{B},\varphi}$. If $\varphi(t)=t^{\frac{\alpha}{n}}$, we denote $\mathcal{M}_{\mathscr{B},\varphi,\overrightarrow{\Psi}}$ and $\mathcal{M}_{\mathscr{B},\varphi}$ by $\mathcal{M}_{\mathscr{B},\alpha,\overrightarrow{\Psi}}$ and $\mathcal{M}_{\mathscr{B},\alpha}$ respectively.
When $\mathscr{B}=\mathcal{R}$, $\mathcal{M}_{\mathscr{B},\alpha}$ coincides with $\mathcal{M}_{\mathcal{R},\alpha}$.
\end{definition}
We summarize the main results as follow:
\begin{theorem}\label{Theorem M-B}
Let $0 < p \leq q < \infty$, $\frac{1}{p} = \frac{1}{p_{1}} + \cdots + \frac{1}{p_{m}}$ with $1 < p_1, \cdots, p_m < \infty$. Let $\mathcal{A}_i$, $\mathcal{B}_i$ and $\mathcal{C}_i$ $(i=1,\ldots,m)$ be Young functions such that
$\mathcal{A}_i^{-1}(t) \mathcal{C}_i^{-1}(t) \leq \mathcal{B}_i^{-1}(t)$, $t>0$ for each $i=1,\ldots,m$.
Assume that $\mathscr{B}$ is a basis and $\{ \mathcal{C}_i \}_{i=1}^m$ is a sequence of Young functions satisfying
$$ \mathcal{M}_{\mathscr{B},\overrightarrow{\mathcal{C}}} : L^{p_1}(\Rn) \times \cdots \times L^{p_m}(\Rn) \rightarrow L^p(\Rn).$$
If $(\vec{w},v)$ are weights such that $\mathcal{M}_{\mathscr{B},\varphi,\overrightarrow{\mathcal{B}}}$ satisfies  $(A_{\mathscr{B},\gamma,v^q})$ condition and
\begin{equation}\label{two-weight-Young condition}
\sup_{B\in \mathscr{B}} \varphi(|B|)|B|^{\frac{1}{q} -\frac{1}{p}} \left( \frac{1}{|B|} \int_B v^q dx \right)^{\frac{1}{q}}
\prod_{i=1}^m \big\| w_i^{-1} \big\|_{\mathcal{A}_i,B} < \infty,
\end{equation}
then $\mathcal{M}_{\mathscr{B},\varphi,\overrightarrow{\mathcal{B}}} : L^{p_1}(w_1^{p_1}) \times \cdots \times L^{p_m}(w_m^{p_m}) \rightarrow L^q(v^q)$.
\end{theorem}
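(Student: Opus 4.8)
The plan is to run a Calderón--Fefferman-style stopping-time decomposition at geometrically spaced heights, in the spirit of the argument of Liu and Luque \cite{L-Luque} for the unbumped multilinear maximal operator, inserting the generalized Hölder inequality in Orlicz spaces to dispose of the three families of bumps $\mathcal{A}_i,\mathcal{B}_i,\mathcal{C}_i$, and using the boundedness of $\mathcal{M}_{\mathscr{B},\overrightarrow{\mathcal{C}}}$ together with the Tauberian-type hypothesis attached to $\mathcal{M}_{\mathscr{B},\varphi,\overrightarrow{\mathcal{B}}}$ in place of the $A_\infty$/Córdoba--Fefferman machinery, which is unavailable for a general basis. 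By monotone convergence and a routine truncation it suffices to bound $\big\|\mathcal{M}_{\mathscr{B}',\varphi,\overrightarrow{\mathcal{B}}}(\vec f)\big\|_{L^q(v^q)}$ for nonnegative, bounded, compactly supported $f_i$ and a finite subfamily $\mathscr{B}'\subset\mathscr{B}$, so that all suprema are attained; since the final bound will be independent of $\mathscr{B}'$, one lets $\mathscr{B}'\uparrow\mathscr{B}$ at the end. Fix a large $a>1$, put $\Omega_k=\{x\in\Rn:\mathcal{M}_{\mathscr{B}',\varphi,\overrightarrow{\mathcal{B}}}(\vec f)(x)>a^k\}$, note $\Omega_{k+1}\subset\Omega_k$, and observe via the layer-cake formula that $\big\|\mathcal{M}_{\mathscr{B}',\varphi,\overrightarrow{\mathcal{B}}}(\vec f)\big\|_{L^q(v^q)}^q\lesssim_a\sum_k a^{kq}\,v^q(\Omega_k\setminus\Omega_{k+1})$.

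For each $k$, every $B\in\mathscr{B}'$ with $\varphi(|B|)\prod_{i=1}^m\|f_i\|_{\mathcal{B}_i,B}>a^k$ is contained in $\Omega_k$, and these sets cover $\Omega_k$. The crucial step is a Córdoba--Fefferman/Vitali-type selection, valid for the basis $\mathscr{B}$ thanks to the Tauberian condition $(A_{\mathscr{B},\gamma,v^q})$ attached to $\mathcal{M}_{\mathscr{B},\varphi,\overrightarrow{\mathcal{B}}}$ and to the Lebesgue-measure Tauberian-type bound for $\mathscr{B}$ implied by $\mathcal{M}_{\mathscr{B},\overrightarrow{\mathcal{C}}}\colon L^{p_1}\times\cdots\times L^{p_m}\to L^p$ (apply the latter to tuples of indicators): it produces from this cover a subfamily $\{B^k_j\}_j$ and measurable sets $E^k_j\subseteq B^k_j\setminus\Omega_{k+1}$ of bounded overlap in $j$ with $\sum_j v^q(E^k_j)\gtrsim v^q(\Omega_k\setminus\Omega_{k+1})$ and $|B^k_j|\le C\,|E^k_j|$; because the sets $\Omega_k\setminus\Omega_{k+1}$ are pairwise disjoint, the whole family $\{E^k_j\}_{k,j}$ then has bounded overlap.

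The core estimate is carried out on a single selected $B=B^k_j$. From $\mathcal{A}_i^{-1}(t)\,\mathcal{C}_i^{-1}(t)\le\mathcal{B}_i^{-1}(t)$ and the generalized Hölder inequality in Orlicz spaces, $\|f_i\|_{\mathcal{B}_i,B}=\|w_i^{-1}\cdot w_if_i\|_{\mathcal{B}_i,B}\lesssim\|w_i^{-1}\|_{\mathcal{A}_i,B}\,\|w_if_i\|_{\mathcal{C}_i,B}$. Taking the product over $i$, multiplying by $\varphi(|B|)\,v^q(B)^{1/q}$, and invoking the bump condition \eqref{two-weight-Young condition} to bound $\varphi(|B|)\big(\tfrac{1}{|B|}\int_B v^q\,dx\big)^{1/q}\prod_i\|w_i^{-1}\|_{\mathcal{A}_i,B}\lesssim|B|^{1/p-1/q}$, one arrives at
\[
a^k\,v^q(B^k_j)^{1/q}\;<\;\varphi(|B^k_j|)\Big(\prod_{i=1}^m\|f_i\|_{\mathcal{B}_i,B^k_j}\Big)v^q(B^k_j)^{1/q}\;\lesssim\;|B^k_j|^{1/p}\prod_{i=1}^m\|w_if_i\|_{\mathcal{C}_i,B^k_j}\;\le\;|B^k_j|^{1/p}\Big(\frac{1}{|E^k_j|}\int_{E^k_j}\mathcal{M}_{\mathscr{B},\overrightarrow{\mathcal{C}}}(\overrightarrow{wf})^p\,dx\Big)^{1/p},
\]
where $\overrightarrow{wf}=(w_1f_1,\dots,w_mf_m)$ and the last inequality uses $\prod_i\|w_if_i\|_{\mathcal{C}_i,B^k_j}\le\mathcal{M}_{\mathscr{B},\overrightarrow{\mathcal{C}}}(\overrightarrow{wf})(x)$ for every $x\in B^k_j\supseteq E^k_j$ together with $\inf_{E^k_j}g\le\big(\tfrac{1}{|E^k_j|}\int_{E^k_j}g^p\big)^{1/p}$. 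Inserting $|B^k_j|\le C|E^k_j|$ gives $a^{kq}\,v^q(B^k_j)\lesssim\big(\int_{E^k_j}\mathcal{M}_{\mathscr{B},\overrightarrow{\mathcal{C}}}(\overrightarrow{wf})^p\,dx\big)^{q/p}$.

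Finally, since $v^q(E^k_j)\le v^q(B^k_j)$ and $\sum_j v^q(E^k_j)\gtrsim v^q(\Omega_k\setminus\Omega_{k+1})$, and since $q\ge p$ makes $t\mapsto t^{q/p}$ superadditive on $[0,\infty)$,
\[
\sum_k a^{kq}\,v^q(\Omega_k\setminus\Omega_{k+1})\;\lesssim\;\sum_{k,j}a^{kq}\,v^q(B^k_j)\;\lesssim\;\sum_{k,j}\Big(\int_{E^k_j}\mathcal{M}_{\mathscr{B},\overrightarrow{\mathcal{C}}}(\overrightarrow{wf})^p\,dx\Big)^{q/p}\;\le\;\Big(\sum_{k,j}\int_{E^k_j}\mathcal{M}_{\mathscr{B},\overrightarrow{\mathcal{C}}}(\overrightarrow{wf})^p\,dx\Big)^{q/p}\;\lesssim\;\big\|\mathcal{M}_{\mathscr{B},\overrightarrow{\mathcal{C}}}(\overrightarrow{wf})\big\|_{L^p(\Rn)}^{q},
\]
the last inequality because $\{E^k_j\}_{k,j}$ has bounded overlap. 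The hypothesis $\mathcal{M}_{\mathscr{B},\overrightarrow{\mathcal{C}}}\colon L^{p_1}\times\cdots\times L^{p_m}\to L^p$ bounds the right-hand side by $\big(\prod_i\|w_if_i\|_{L^{p_i}}\big)^q=\big(\prod_i\|f_i\|_{L^{p_i}(w_i^{p_i})}\big)^q$, and letting $\mathscr{B}'\uparrow\mathscr{B}$ completes the proof. The main obstacle is the selection step of the second paragraph: in a general basis there is no nesting or exact tiling, so producing sets $E^k_j$ that are simultaneously of bounded overlap, comparable in Lebesgue measure to $B^k_j$, and carry a fixed proportion of the $v^q$-mass of $\Omega_k\setminus\Omega_{k+1}$ forces one to combine the two Tauberian-type inputs quantitatively inside a covering lemma; once that is in hand, the remainder is bookkeeping with Orlicz norms and the two-weight bump.
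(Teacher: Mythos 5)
The calculation you carry out on a single selected set (O'Neil's inequality to split $\|f_i\|_{\mathcal{B}_i,B}\lesssim\|w_i^{-1}\|_{\mathcal{A}_i,B}\|w_if_i\|_{\mathcal{C}_i,B}$, insertion of the bump to produce $|B|^{1/p-1/q}$, pointwise domination of $\prod_i\|w_if_i\|_{\mathcal{C}_i,B}$ by $\mathcal{M}_{\mathscr{B},\overrightarrow{\mathcal{C}}}(\overrightarrow{wf})$ on $E\subset B$, and then superadditivity of $t\mapsto t^{q/p}$ followed by the $L^{p_1}\times\cdots\times L^{p_m}\to L^p$ bound) is correct and coincides with the core computation in the paper's proof. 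The problem is the selection step, which you correctly flag as ``the main obstacle'' but then assert rather than prove, and in a form that is in fact stronger than what the hypotheses yield.

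Concretely, you posit a subfamily $\{B^k_j\}_j$ together with sets $E^k_j\subseteq B^k_j\setminus\Omega_{k+1}$ satisfying \emph{simultaneously} (i) $|B^k_j|\le C|E^k_j|$, (ii) $\sum_j v^q(E^k_j)\gtrsim v^q(\Omega_k\setminus\Omega_{k+1})$, and (iii) bounded overlap of $\{B^k_j\}_j$ (hence, via disjointness of the shells $\Omega_k\setminus\Omega_{k+1}$, of the whole family $\{E^k_j\}_{k,j}$). Property (i) is a Calder\'on--Zygmund stopping-time statement: it requires that any selected $B^k_j$ at level $a^k$ has at most a fixed proportion of its Lebesgue measure inside the higher level set $\Omega_{k+1}$. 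For cubes this follows from the dyadic maximal selection; for a general basis (in particular for rectangles) there is no nesting structure that forces it, and neither the Tauberian hypothesis $(A_{\mathscr{B},\gamma,v^q})$ nor the $L^p$-boundedness of $\mathcal{M}_{\mathscr{B},\overrightarrow{\mathcal{C}}}$ applied to indicators supplies it. Likewise, (iii) is strictly stronger than what the Calder\'on--Zygmund/C\'ordoba--Fefferman machinery gives even in the rectangle case, where one only gets exponential integrability of the overlap function, not a uniform bound. If one weakens (i)--(iii) to what is actually available, the superadditivity step $\sum_{k,j}(\int_{E^k_j}\ldots)^{q/p}\le(\sum_{k,j}\int_{E^k_j}\ldots)^{q/p}$ no longer closes, because the integrals are not over pairwise disjoint sets.

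The paper closes this gap differently. It invokes Jawerth's covering lemma (Lemma~\ref{Lemma condition (A)}), which, under only the Tauberian condition $(A_{\mathscr{B},\gamma,v^q})$, produces a $\gamma$-scattered subfamily $\{\tilde A_i\}$ \emph{and} a chain inequality $v^q(\bigcup_{s<j}A_s)\lesssim v^q(\bigcup_{s<i}A_s)+v^q(\bigcup_{i\le s<j}\tilde A_s)$. The $\gamma$-scattered property gives the measure comparison $|\tilde A_i|\lesssim|E_i|$ with $E_i=\tilde A_i\setminus\bigcup_{s<i}\tilde A_s$, but only \emph{within a single} ordered sequence. To make the different dyadic levels cooperate, the paper distributes the covering sets of the levels $N, N-1,\ldots,-N$ over $\mu$ separate sequences (skipping $\mu$ levels between consecutive blocks of the same sequence), applies the covering lemma once per sequence, and then absorbs the cross-level contribution $\Delta_2$ into $\Delta_1$ by choosing $\mu$ so large that $2^{-q\mu}\le\tfrac12$. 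That $\mu$-splitting-plus-absorption device is exactly what replaces your unproven (i)--(iii). Without an argument producing your selection, the proposal has a genuine gap; with Jawerth's lemma and the $\mu$-splitting it becomes the paper's proof.
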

%%%%%%%%%%%%%%%%%%%%%%%%%%%%%%%%%%%%%%%%
\begin{corollary}\label{Theorem power-B}
Let $0 \leq \alpha < mn$, $\frac{1}{p} = \frac{1}{p_{1}} + \cdots + \frac{1}{p_{m}}$ with $1 < p_1, \cdots, p_m < \infty$,
and $0 < p \leq q < \infty$. Assume that $\mathscr{B}$ is a Muckenhoupt basis.
If $(\vec{w},v)$ are weights such that $\mathcal{M}_{\mathscr{B},\alpha}$ satisfies $(A_{\mathscr{B},\gamma,v})$ condition and the power bump condition
\begin{equation}\label{power bump condition}
\sup_{B\in \mathscr{B}} |B|^{\frac{\alpha}{n} + \frac{1}{q} -\frac{1}{p}} \left( \frac{1}{|B|} \int_B v dx \right)^{\frac{1}{q}}
\prod_{i=1}^m \left( \frac{1}{|B|} \int_B w_i^{(1-p_i')r} dx \right)^{\frac{1}{r p_i'}} < \infty, \quad \ for \ some \ r > 1,
\end{equation}
then $\mathcal{M}_{\mathscr{B},\alpha} : L^{p_1}(w_1) \times \cdots \times L^{p_m}(w_m) \rightarrow L^q(v)$.
\end{corollary}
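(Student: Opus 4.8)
The plan is to obtain Corollary \ref{Theorem power-B} as a specialization of Theorem \ref{Theorem M-B} in which the three families of Young functions are taken to be power functions. First I would reduce to a normalized form of the weights: setting $\tilde{w}_i = w_i^{1/p_i}$ and $\tilde{v} = v^{1/q}$, one has $L^{p_i}(w_i) = L^{p_i}(\tilde{w}_i^{p_i})$ and $L^q(v) = L^q(\tilde{v}^q)$, so the desired estimate is equivalent to $\mathcal{M}_{\mathscr{B},\alpha} : L^{p_1}(\tilde{w}_1^{p_1}) \times \cdots \times L^{p_m}(\tilde{w}_m^{p_m}) \to L^q(\tilde{v}^q)$. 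In Theorem \ref{Theorem M-B} I would then choose $\varphi(t) = t^{\alpha/n}$ and $\mathcal{B}_i(t) = t$ for every $i$, so that $\mathcal{M}_{\mathscr{B},\varphi,\overrightarrow{\mathcal{B}}}$ is exactly $\mathcal{M}_{\mathscr{B},\alpha}$; since $\tilde{v}^q = v$, the assumption of Corollary \ref{Theorem power-B} that $\mathcal{M}_{\mathscr{B},\alpha}$ satisfies $(A_{\mathscr{B},\gamma,v})$ is precisely the Tauberian hypothesis that Theorem \ref{Theorem M-B} imposes on $\mathcal{M}_{\mathscr{B},\varphi,\overrightarrow{\mathcal{B}}}$.

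The remaining freedom is in $\mathcal{A}_i$ and $\mathcal{C}_i$, which I would take to be $\mathcal{A}_i(t) = t^{p_i' r}$ and $\mathcal{C}_i(t) = t^{(p_i' r)'}$, the exponent $(p_i' r)'$ being the conjugate of $p_i' r$. These are genuine Young functions since $p_i' r > 1$ and $(p_i' r)' > 1$, and the pointwise requirement of Theorem \ref{Theorem M-B} holds with equality, $\mathcal{A}_i^{-1}(t)\,\mathcal{C}_i^{-1}(t) = t^{1/(p_i' r)}\, t^{1/(p_i' r)'} = t = \mathcal{B}_i^{-1}(t)$. For a power Young function $t \mapsto t^s$ the $\Phi$-average over $B$ coincides with the $L^s$-average, so $\big\| \tilde{w}_i^{-1} \big\|_{\mathcal{A}_i,B} = \big( \tfrac{1}{|B|}\int_B w_i^{-p_i' r/p_i}\,dx \big)^{1/(p_i' r)}$; invoking $-p_i'/p_i = 1 - p_i'$ this equals $\big( \tfrac{1}{|B|}\int_B w_i^{(1-p_i')r}\,dx \big)^{1/(r p_i')}$, and $\big( \tfrac{1}{|B|}\int_B \tilde{v}^q\,dx \big)^{1/q} = \big( \tfrac{1}{|B|}\int_B v\,dx \big)^{1/q}$. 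Consequently the bump condition $(\ref{two-weight-Young condition})$ for $(\overrightarrow{\tilde{w}}, \tilde{v})$ with these particular Young functions is identical to the power bump condition $(\ref{power bump condition})$ assumed in the Corollary.

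It then remains only to verify the last hypothesis of Theorem \ref{Theorem M-B}, namely $\mathcal{M}_{\mathscr{B},\overrightarrow{\mathcal{C}}} : L^{p_1}(\Rn) \times \cdots \times L^{p_m}(\Rn) \to L^p(\Rn)$. One checks that $r > 1$ forces $1 < (p_i' r)' < p_i$, hence $p_i/(p_i' r)' > 1$. From the definition of the Orlicz maximal operator, $\mathcal{M}_{\mathscr{B},\overrightarrow{\mathcal{C}}}(\vec{f})(x) = \sup_{B \ni x} \prod_{i=1}^m \big\| f_i \big\|_{\mathcal{C}_i,B} \leq \prod_{i=1}^m \big( M_{\mathscr{B}}(|f_i|^{(p_i' r)'})(x) \big)^{1/(p_i' r)'}$, and because $\mathscr{B}$ is a Muckenhoupt basis, $M_{\mathscr{B}}$ is bounded on each $L^{p_i/(p_i' r)'}(\Rn)$. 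Combining this with the generalized Hölder inequality, with the exponents governed by $\tfrac{1}{p} = \sum_{i=1}^m \tfrac{1}{p_i}$, yields the required unweighted bound. All hypotheses of Theorem \ref{Theorem M-B} being satisfied, its conclusion gives $\mathcal{M}_{\mathscr{B},\alpha} : L^{p_1}(\tilde{w}_1^{p_1}) \times \cdots \times L^{p_m}(\tilde{w}_m^{p_m}) \to L^q(\tilde{v}^q)$, which is the claim.

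I expect the only point that is more than routine exponent bookkeeping to be this last step: it is where the Muckenhoupt-basis assumption is genuinely used, in order to dominate the auxiliary multilinear Orlicz maximal operator $\mathcal{M}_{\mathscr{B},\overrightarrow{\mathcal{C}}}$ by a product of powers of the linear maximal operator $M_{\mathscr{B}}$ and then close the estimate via Hölder. Everything else is the verification that the power bump $(\ref{power bump condition})$ is exactly the $\mathcal{A}_i$-bump condition $(\ref{two-weight-Young condition})$ for the power Young functions $\mathcal{A}_i(t) = t^{p_i' r}$.
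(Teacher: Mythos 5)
Your proof is correct and follows essentially the same route as the paper: both normalize via $\tilde{w}_i=w_i^{1/p_i}$, $\tilde{v}=v^{1/q}$, choose the power Young functions $\mathcal{A}_i(t)=t^{p_i'r}$ with $\mathcal{C}_i$ given by the conjugate power (the paper writes this as $\overline{\Psi}_i$, which is equivalent), use the Muckenhoupt-basis hypothesis together with $(p_i'r)'<p_i$ and Hölder to verify the unweighted bound on $\mathcal{M}_{\mathscr{B},\overrightarrow{\mathcal{C}}}$, and then invoke Theorem~\ref{Theorem M-B}. The only cosmetic difference is that you make the check $\mathcal{A}_i^{-1}\mathcal{C}_i^{-1}=\mathcal{B}_i^{-1}$ explicit, which the paper leaves implicit.
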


\begin{remark}
It is easy to see that our Corollary 2.7 extends Theorem $2.3$ of Grafakos et al. \cite{GLPT} in the following sense. Under the same assumptions, the authors \cite{GLPT} only got the boundedness from $L^{p_1}(w_1) \times \cdots \times L^{p_m}(w_m)$ to $L^{p,\infty}(v)$. On the other hand, we enlarge the range of $\alpha$ from $\alpha=0$ to $0 \leq \alpha < mn$.
\end{remark}
%%%%%%%%%%%%%%%%%%%%%%%%%%%%%%%%%%%%%%%%%%%%
Finally, we present a two weighted norm inequality in the more general context of Banach function spaces.
\begin{theorem}\label{Theorem M-Y}
Let $\frac{1}{p} = \frac{1}{p_{1}} + \cdots + \frac{1}{p_{m}}$ with $1 < p_1, \cdots, p_m < \infty$,
and $0 < p \leq q < \infty$. Let $\varphi$ be a function as in Definition $\ref{def-M-B-varphi-Psi}$. Suppose that $Y_1,\ldots,Y_m$ be Banach function spaces such that
$$ \mathcal{M}_{\vec{Y}'} : L^{p_1}(\Rn) \times \cdots \times L^{p_m}(\Rn)\rightarrow L^p(\Rn) .$$
If $(\vec{w},v)$ are weights such that $\mathcal{M}_{\vec{Y}'}$ satisfies $(A_{\mathscr{B},\gamma,v^q})$ condition  and
\begin{equation}\label{two-weight-Banach condition}
\sup_{B\in \mathscr{B}} \varphi(|B|)|B|^{\frac{1}{q} -\frac{1}{p}} \left( \frac{1}{|B|} \int_B v^q dx \right)^{\frac{1}{q}}
\prod_{i=1}^m \big\| w_i^{-1} \big\|_{Y_i,B} < \infty,
\end{equation}
then $\mathcal{M}_{\mathscr{B},\varphi} : L^{p_1}(w_1^{p_1}) \times \cdots \times L^{p_m}(w_m^{p_m}) \rightarrow L^q(v^q)$.
\end{theorem}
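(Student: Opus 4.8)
The plan is to prove Theorem~\ref{Theorem M-Y} by adapting the scheme used for Theorem~\ref{Theorem M-B}: the only substantive change is that the generalized H\"older inequality for Orlicz norms is everywhere replaced by H\"older's inequality for the pair $Y_i,Y_i'$, namely $\frac1{|B|}\int_B|fg|\,dx\le 2\,\|f\|_{Y_i,B}\,\|g\|_{Y_i',B}$, and that the ``middle'' Young functions $\mathcal B_i$ of Theorem~\ref{Theorem M-B} are here taken to be the identity, so that no bump is placed on the maximal operator itself and the conclusion concerns $\mathcal M_{\mathscr B,\varphi}$. By a routine density and truncation argument we may assume each $f_i$ is nonnegative, bounded, and compactly supported; set $g_i=f_iw_i$.

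The first step is to use the bump condition \eqref{two-weight-Banach condition} to reduce to an \emph{unweighted} estimate for an auxiliary operator. For every $B\in\mathscr B$, writing $f_i=g_i\cdot w_i^{-1}$ and applying H\"older in $Y_i$ gives $\frac1{|B|}\int_Bf_i\,dy\le 2\,\|g_i\|_{Y_i',B}\,\|w_i^{-1}\|_{Y_i,B}$, while \eqref{two-weight-Banach condition} says precisely that $\varphi(|B|)\prod_{i=1}^m\|w_i^{-1}\|_{Y_i,B}\le C\,|B|^{1/p}\,v^q(B)^{-1/q}$. Hence $\mathcal M_{\mathscr B,\varphi}(\vec f)(x)\lesssim N(\vec g)(x)$, where $N(\vec g)(x)=\sup_{B\ni x}|B|^{1/p}\,v^q(B)^{-1/q}\prod_{i=1}^m\|g_i\|_{Y_i',B}$, so it suffices to prove $\|N(\vec g)\|_{L^q(v^q)}\lesssim\prod_{i=1}^m\|g_i\|_{L^{p_i}(\Rn)}$. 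Fixing $a>1$ and putting $\Omega_k=\{x\in\Rn:N(\vec g)(x)>a^k\}$, the layer-cake formula gives $\|N(\vec g)\|_{L^q(v^q)}^q\lesssim\sum_k a^{kq}\,v^q(\Omega_k)$, and for every $x\in\Omega_k$ there is a ball $B^x_k\in\mathscr B$ with $x\in B^x_k$ and
\[
a^{kq}\,v^q(B^x_k)\ \le\ |B^x_k|^{q/p}\prod_{i=1}^m\|g_i\|_{Y_i',B^x_k}^{q}.
\]

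The crux is to replace, in a general basis, the Calder\'on--Zygmund/Vitali selection of maximal sets, and this is exactly where the Tauberian hypothesis is used. Invoking the machinery of \cite{L-Luque,HLP} relating Tauberian conditions to covering and sparseness, the assumption that $\mathcal M_{\vec{Y}'}$ satisfies $(A_{\mathscr B,\gamma,v^q})$ provides, for each $k$, a countable subfamily $\{B^k_j\}_j$ of the above cover with $\Omega_k\subset\bigcup_j B^k_j$ (up to null sets) such that the whole collection $\{B^k_j\}_{k,j}$ is sparse, i.e.\ carries pairwise disjoint sets $E^k_j\subset B^k_j$ with $|E^k_j|\gtrsim|B^k_j|$. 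Using the displayed ball estimate (and $|B|^{q/p}=(|B|^{1/p})^q$),
\[
\|N(\vec g)\|_{L^q(v^q)}^q\ \lesssim\ \sum_k a^{kq}\sum_j v^q(B^k_j)\ \le\ \sum_{k,j}\Big(|B^k_j|^{1/p}\prod_{i=1}^m\|g_i\|_{Y_i',B^k_j}\Big)^{q}.
\]
Since $0<p\le q$, the embedding $\ell^p\hookrightarrow\ell^q$ lowers the outer exponent to $p$; then, since $E^k_j\subset B^k_j$ forces $\prod_i\|g_i\|_{Y_i',B^k_j}\le\mathcal M_{\vec{Y}'}(\vec g)(x)$ for $x\in E^k_j$, the disjointness of the $E^k_j$ together with the hypothesis $\mathcal M_{\vec{Y}'}:L^{p_1}(\Rn)\times\cdots\times L^{p_m}(\Rn)\to L^p(\Rn)$ yield
\[
\sum_{k,j}|B^k_j|\prod_{i=1}^m\|g_i\|_{Y_i',B^k_j}^{p}\ \lesssim\ \sum_{k,j}\int_{E^k_j}\mathcal M_{\vec{Y}'}(\vec g)^p\,dx\ \le\ \big\|\mathcal M_{\vec{Y}'}(\vec g)\big\|_{L^p(\Rn)}^p\ \lesssim\ \prod_{i=1}^m\|g_i\|_{L^{p_i}(\Rn)}^p.
\]
Raising to the power $q/p$ and unwinding $g_i=f_iw_i$ finishes the proof.

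The main obstacle I anticipate is the selection step: in a general basis there are no maximal dyadic cubes, so the family $\{B^k_j\}$ and its sparseness have to be manufactured from the Tauberian condition $(A_{\mathscr B,\gamma,v^q})$ (and the $A_\infty$-type self-improvement it is known to enjoy), and extracting a subfamily that simultaneously covers $\Omega_k$, keeps the averaging property of the cover, and admits disjoint major subsets is the technical heart of the matter. Everything else --- the H\"older/bump reduction, the level-set decomposition, the $\ell^p\hookrightarrow\ell^q$ step for $p\le q$, and the final Carleson embedding against $\mathcal M_{\vec{Y}'}$ --- should be routine once the Orlicz-space ingredients from the proof of Theorem~\ref{Theorem M-B} are transcribed into the language of Banach function spaces.
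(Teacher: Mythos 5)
Your overall plan matches the paper's: the paper's proof of Theorem~\ref{Theorem M-Y} is literally ``repeat the proof of Theorem~\ref{Theorem M-B}, with the generalized H\"older inequality on Orlicz spaces replaced by the H\"older inequality $\int_{\Rn}|fg|\,dx\le\|f\|_{X}\|g\|_{X'}$ in Banach function spaces, and with $\mathcal M_{\mathscr B,\overrightarrow{\mathcal C}}$ replaced by $\mathcal M_{\vec Y'}$.'' Your reduction to the auxiliary operator $N(\vec g)$, the level-set decomposition, the $\ell^p\hookrightarrow\ell^q$ step, and the final Carleson embedding against $\mathcal M_{\vec Y'}$ all correspond to steps of that proof.

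However, the way you describe the selection step is not what the machinery provides, and this is more than a cosmetic gap. You assert that the Tauberian condition $(A_{\mathscr B,\gamma,v^q})$ delivers, for each $k$, a cover $\{B^k_j\}_j$ of $\Omega_k$ such that the \emph{entire} collection $\{B^k_j\}_{k,j}$ is sparse (pairwise disjoint major subsets across all scales $k$). That global sparseness is not a consequence of $(A_{\mathscr B,\gamma,v^q})$, and no lemma in \cite{J}, \cite{GLPT}, \cite{L-Luque}, or \cite{HLP} produces it. What the Tauberian condition does give is Lemma~\ref{Lemma condition (A)} (the Jawerth / Grafakos--Liu--P\'erez--Torres scattered-sets lemma): from a finite sequence $\{A_i\}$ of basis sets one extracts a $\gamma$-scattered subsequence $\{\tilde A_i\}$ together with the bookkeeping inequality (c), $v^q(\cup_{s<j}A_s)\lesssim v^q(\cup_{s<i}A_s)+v^q(\cup_{i\le s<j}\tilde A_s)$. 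Disjointness of the ``new parts'' $E_i(\ell)=\tilde A_i(\ell)\setminus\cup_{s<i}\tilde A_s(\ell)$ holds only within one such sequence, not across the full range of scales. The cross-scale control in the paper is instead achieved by distributing the sets from the level families $b_k$ into $\mu$ residue classes $\{A_i(\ell)\}_{i\ge1}$, $0\le\ell\le\mu-1$, applying Lemma~\ref{Lemma condition (A)} to each class, and then writing $\sum_k 2^{kq}v^q(\Omega_k)=\Delta_1\lesssim\Delta_2+\Delta_3$ with $\Delta_2\le\frac12\Delta_1$ for $\mu$ large, so that the geometric tail $\Delta_2$ can be absorbed before the disjointness (per $\ell$, summed over the fixed $\mu$ classes) is invoked. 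If you replace your claimed sparse-family extraction with this residue-class mechanism, which is exactly the one used for Theorem~\ref{Theorem M-B}, the argument closes and your proof becomes a faithful transcription of the paper's.
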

%%%%%%%%%%%%%%%%%%%%%%%%%%%%%%%%%%%%%%%%

This article is organized as follows: A survey on multiple strong Muckenhoupt weights $A_{(\vec{p},q),\mathcal{R}}$ will be given in Section $\ref{Sec-weights}$. Section $\ref{Sec-1-5}$ will be devoted to give the proof of Theorem $\ref{Theorem endpoint}$ and Theorem $\ref{Theorem one-weight}$. In Section $\ref{Sec-Last}$, we will complete the proofs of the rest Theorems.

%%%%%%%%%%%%%%%%%%%%%%%%%%%%%%%%%%%%%%%%%%%%%%%%%%%%%%%%%%%%%%%%%%%%%%%%%%%%%%%%%%
\section{A Survey on Multiple Strong Muckenhoupt Weights}\label{Sec-weights}
In this section, our goal is to study the properties of multiple weights related to rectangles systematically. We first recall the definition of $A_{\vec{p},\mathcal{R}}$ which was introduced in \cite{GLPT}.
\begin{definition}[\textbf{Multiple weights} $A_{\vec{p},\mathcal{R}}$, \cite{GLPT}]
Let $1 \leq p_1,\ldots,p_m < \infty$. We say that $m$-tuple of weights $\vec{w}$ satisfies the $A_{\vec{p},\mathcal{R}}$ condition ( or $\vec{w} \in A_{\vec{p},\mathcal{R}}$ ) if
$$
[\vec{w}]_{A_{\vec{p},\mathcal{R}}} := \sup_{R \in \mathcal{R}} \left(\frac{1}{|R|}\int_R \widehat{\nu}_{\vec{w}} dx \right)
\prod_{i=1}^m \left(\frac{1}{|R|} \int_R w_i^{1-p_i'} dx \right)^{{p}/{p_i'}} < \infty,
$$
where $\widehat{\nu}_{\vec{w}}=\prod_{i=1}^m w_i^{p/{p_i}}$. If $p_i = 1$, $(\frac{1}{R} \int_R w_i^{1-p_i'})^{1/{p_i'}}$ is understood as $(\inf_R w_i)^{-1}$.
\end{definition}

%%%%%%%%%%%%%%%%%%%%%%%%%%%%%%%%%
The characterizations of multiple weights are as follows.

\begin{theorem}\label{weight-1}
Let $1 \leq p_1, \cdots, p_m < \infty$, $\frac{1}{p} = \frac{1}{p_{1}} + \cdots + \frac{1}{p_{m}}$ and $p_0=\min\{p_i\}_i$. Then the following statements hold :
\begin{enumerate}
\item [(1)] $ A_{r_1 \vec{p},\mathcal{R}} \subsetneqq A_{r_2 \vec{p},\mathcal{R}}, \ \ \text{for any} \ \ 1/p_0 \leq r_1 < r_2 < \infty.$
\item [(2)] $A_{\vec{p},\mathcal{R}}=\bigcup_{1/p_0 \leq r < 1} A_{r \vec{p},\mathcal{R}}$.
\item [(3)] $\vec{w} \in A_{{\vec{p}},\mathcal{R}}$ if and only if
$$
{\widehat{\nu}_{\vec{w}}} \in A_{mp,\mathcal{R}} \ \ \text{and} \ \ w_i^{1-p_i'} \in A_{mp_i',\mathcal{R}}, \ i=1,\ldots,m,
$$
where $w_i^{1-p_i'} \in A_{mp_i',\mathcal{R}}$ is understood as $w_i^{1/m} \in A_{1,\mathcal{R}}$ if $p_i=1$.
\end{enumerate}
\end{theorem}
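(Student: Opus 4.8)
The plan is to extract all three statements from one algebraic reformulation together with the one‑parameter $A_{\infty,\mathcal{R}}$ theory. Write $\langle g\rangle_R=\frac1{|R|}\int_R g$ and, for an $m$‑tuple $\vec w$, set $\sigma_i=w_i^{1-p_i'}$. First I note that $\widehat{\nu}_{\vec w}=\prod_i w_i^{p/p_i}$ built from $\vec p$ and the analogous object built from $r\vec p$ coincide, since the exponents $rp/(rp_i)=p/p_i$ do not depend on $r$; a short computation then rewrites the defining functional as
$$[\vec w]_{A_{r\vec p,\mathcal{R}}}=\sup_R\ \langle\widehat{\nu}_{\vec w}\rangle_R\ \prod_{i=1}^m\Big(\langle\sigma_i^{\theta_i}\rangle_R^{1/\theta_i}\Big)^{p/p_i'},\qquad \theta_i=\theta_i(r)=\frac{p_i-1}{\,rp_i-1\,},$$
with the usual $L^\infty$ convention when $rp_i=1$. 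Here $\theta_i(1)=1$, $\theta_i(r)\ge1$ for $r\le1$, and $\theta_i$ is strictly decreasing in $r$.

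Turning to (1): by the power--mean (Jensen) inequality $\theta\mapsto\langle\sigma_i^\theta\rangle_R^{1/\theta}$ is non‑decreasing, so the display shows $r\mapsto[\vec w]_{A_{r\vec p,\mathcal{R}}}$ is non‑increasing, whence $A_{r_1\vec p,\mathcal{R}}\subseteq A_{r_2\vec p,\mathcal{R}}$ for $1/p_0\le r_1<r_2$. For properness I would take the power weight $w_1(x)=|x_1|^{\beta}$, $w_2=\cdots=w_m\equiv1$, with $\beta\in[\,r_1p_1-1,\,r_2p_1-1)\cap(0,\infty)$ (a nonempty set since $r_1<r_2$): the factors with $i\ge2$ are trivial, $\widehat{\nu}_{\vec w}=|x_1|^{\beta p/p_1}$ and $\sigma_1^{\theta_1(r)}=|x_1|^{-\beta/(rp_1-1)}$, the relevant supremum is scale invariant and finite exactly when $\beta<r_2p_1-1$, while $\langle\sigma_1^{\theta_1(r_1)}\rangle_R=\infty$ for rectangles meeting $\{x_1=0\}$ when $\beta\ge r_1p_1-1$; hence $\vec w\in A_{r_2\vec p,\mathcal{R}}\setminus A_{r_1\vec p,\mathcal{R}}$.

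For (3), the ``only if'' direction is H\"older's inequality applied rectangle by rectangle. Since $\widehat{\nu}_{\vec w}^{\,1-(mp)'}=\prod_j w_j^{-p/(p_j(mp-1))}$, splitting with H\"older exponents $q_j=(mp-1)p_j'/p$ (their reciprocals sum to $1$ because $\sum_j 1/p_j'=m-1/p$) gives $\langle\widehat{\nu}_{\vec w}^{\,1-(mp)'}\rangle_R^{\,mp-1}\le\prod_j\langle\sigma_j\rangle_R^{p/p_j'}$, hence $[\widehat{\nu}_{\vec w}]_{A_{mp,\mathcal{R}}}\le[\vec w]_{A_{\vec p,\mathcal{R}}}$; likewise, using the identity $(p_i-1)(mp_i'-1)=(m-1)p_i+1$ one factors $\sigma_i^{1-(mp_i')'}=w_i^{1/((m-1)p_i+1)}$ against $\widehat{\nu}_{\vec w}$ and the $\sigma_j$ with $j\neq i$ to get $[\sigma_i]_{A_{mp_i',\mathcal{R}}}\le[\vec w]_{A_{\vec p,\mathcal{R}}}^{\,p_i'/p}$. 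For ``if'', set $c_i=\frac{p}{p_i'(mp-1)}$, so $\sum_i c_i=1$, $c_i(mp-1)=p/p_i'$ and $\prod_i\sigma_i^{c_i}=\widehat{\nu}_{\vec w}^{\,1-(mp)'}$. Each $\sigma_i\in A_{mp_i',\mathcal{R}}\subseteq A_{\infty,\mathcal{R}}$, so the reverse Jensen inequality yields $\langle\sigma_i\rangle_R\le C_i\exp\langle\log\sigma_i\rangle_R$; raising to the power $c_i$, multiplying over $i$, and using $\exp\langle h\rangle_R\le\langle e^h\rangle_R$ gives $\prod_i\langle\sigma_i\rangle_R^{c_i}\le C\,\langle\widehat{\nu}_{\vec w}^{\,1-(mp)'}\rangle_R$. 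Combining this with $\langle\widehat{\nu}_{\vec w}\rangle_R\le[\widehat{\nu}_{\vec w}]_{A_{mp,\mathcal{R}}}\,\langle\widehat{\nu}_{\vec w}^{\,1-(mp)'}\rangle_R^{-(mp-1)}$ (the $A_{mp,\mathcal{R}}$ condition) and $c_i(mp-1)=p/p_i'$ shows $[\vec w]_{A_{\vec p,\mathcal{R}}}<\infty$.

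For (2), the inclusion $\bigcup_{1/p_0\le r<1}A_{r\vec p,\mathcal{R}}\subseteq A_{\vec p,\mathcal{R}}$ is the case $r_2=1$ of (1). Conversely, if $\vec w\in A_{\vec p,\mathcal{R}}$ then, by the ``only if'' part of (3), each $\sigma_i\in A_{mp_i',\mathcal{R}}\subseteq A_{\infty,\mathcal{R}}$, so there is $\delta>0$ with $\sigma_i\in \mathrm{RH}_{1+\delta}$ for all $i$; picking $r\in[1/p_0,1)$ close enough to $1$ that $\theta_i(r)\le1+\delta$ for every $i$ (possible since $\theta_i(r)\to1$), the reverse H\"older inequality together with the reformulation gives $[\vec w]_{A_{r\vec p,\mathcal{R}}}\lesssim[\vec w]_{A_{\vec p,\mathcal{R}}}<\infty$, i.e.\ $\vec w\in A_{r\vec p,\mathcal{R}}$. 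The one nonelementary input --- and the place where the product structure really enters --- is the $A_{\infty,\mathcal{R}}$ self‑improvement used above (reverse H\"older and reverse Jensen for the rectangle basis): for cubes these come from a Calder\'on--Zygmund stopping‑time argument that is unavailable for rectangles of unbounded eccentricity, so in the rectangle setting one instead reduces to the one‑dimensional $A_p$ theory in each coordinate and iterates it (as in \cite{C-Rubio}); I expect checking that this passage goes through with uniform constants to be the main technical point.
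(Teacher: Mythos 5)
Your proposal is correct and follows essentially the route the paper intends. The paper's own treatment simply invokes the H\"older-based algebraic arguments of \cite{CWX} (Theorems~2.4 and~3.11 there), observing that this algebra carries over verbatim from cubes to rectangles, and singles out the power-weight characterization (Proposition~\ref{x-alpha}: $|x|^\alpha\in A_{p,\mathcal R}(\Rn)$ iff $-1<\alpha<p-1$) as the only rectangle-specific input needed for the strict inclusion in~(1). Your reformulation of $[\vec w]_{A_{r\vec p,\mathcal R}}$ in terms of the $r$-independent $\widehat\nu_{\vec w}$ and the power means $\langle\sigma_i^{\theta_i}\rangle_R^{1/\theta_i}$, the H\"older exponent bookkeeping in both directions of~(3), and the reverse-Jensen argument for the ``if'' direction all reconstruct precisely that algebra. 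The one place you genuinely deviate is the counterexample in~(1): you take $w_1(x)=|x_1|^\beta$ and $w_2=\dots=w_m\equiv1$ instead of the radial $|x|^\alpha$, which reduces the $A_{p,\mathcal R}$ verification to the classical one-dimensional $A_p(\R)$ criterion for power weights and thus dispenses with Proposition~\ref{x-alpha} entirely --- a mild but real simplification of the paper's route. The technical input you flag at the end (reverse H\"older and reverse Jensen for $A_{\infty,\mathcal R}$ weights) is exactly where rectangle geometry enters; it is legitimately available: the paper itself uses the reverse H\"older inequality for $A_{\infty,\mathcal R}$ via Theorem~6.7 of \cite{C-Rubio} in the proof of Theorem~\ref{Theorem one-weight}, and both estimates follow with uniform constants by iterating the one-dimensional theory coordinatewise, as you anticipate.
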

%%%%%%%%%%%%%%%%%%%%%%%%%%%%%%%%

%%%%%%%%%%%%%%%%%%%%%%%%%%%%%%%%%%%%
\begin{theorem}\label{weight-2}
Let $1 \leq p_1, \cdots, p_m < \infty$, $\frac{1}{p} = \frac{1}{p_{1}} + \cdots + \frac{1}{p_{m}}$
and $\frac{1}{m}\leq p \leq q < \infty$. There holds
\begin{enumerate}
\item [(i)] $\vec{w} \in A_{({\vec{p}},q),\mathcal{R}}$ if and only if
$$
{\nu_{\vec{w} }}^q  \subset A_{mq,\mathcal{R}} \ \ \text{and} \ \ w_i^{-p_i'} \in A_{mp_i',\mathcal{R}}, \ i=1,\ldots,m.
$$
When $p_i = 1$, $w_i^{-p_i'}$ is understood as $w_i^{1/m} \in A_{1,\mathcal{R}}$.
\item [(ii)] Assume that $0 < \alpha < mn$, $p_1,\ldots,p_m < \frac{mn}{\alpha}$ and $\frac{1}{q} = \frac{1}{p} - \frac{\alpha}{n}$. Then $\vec{w} \in A_{({\vec{p}},q),\mathcal{R}}$ if and only if
$$
{\nu_{\vec{w} }}^q \in A_{q(m-\alpha/n),\mathcal{R}} \ \ \text{and} \ \ w_i^{-p_i'} \in A_{p_i'(m-\alpha/n),\mathcal{R}}, \ i=1,\ldots,m.
$$
When $p_i = 1$, $w_i^{-p_i'} \in A_{p_i'(m-\alpha/n),\mathcal{R}}$ is understood as $w_i^{n/(mn-\alpha)} \in A_{1,\mathcal{R}}$.
\end{enumerate}
\end{theorem}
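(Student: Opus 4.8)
To prove this theorem I would establish (i) and (ii) in parallel, since they rest on the same mechanism and differ only in the numerology of the indices. Write $\sigma_i:=w_i^{-p_i'}$ and $\nu:=\nu_{\vec w}=\prod_{i=1}^m w_i$, let $\langle\cdot\rangle_R$ denote the average over $R$, and recall $g\in A_{s,\mathcal R}$ means $\sup_R\langle g\rangle_R\langle g^{-1/(s-1)}\rangle_R^{s-1}<\infty$. In (i) set $s:=mq$, $s_i:=mp_i'$; in (ii) set $s:=q(m-\alpha/n)$, $s_i:=p_i'(m-\alpha/n)$. The condition $\alpha<mn$ gives $s>1$ in (ii), and $p_i<mn/\alpha$ gives $s_i>1$ there (indeed $s_i>m$); in (i), $s>1$ and $s_i>1$ are clear except when some $p_i=1$. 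Put $c_0:=-q/(s-1)<0$ and $d_i:=p_i'/(s_i-1)>0$; then $(\nu^q)^{-1/(s-1)}=\prod_i w_i^{c_0}$ and $\sigma_i^{-1/(s_i-1)}=w_i^{d_i}$, so the targeted one-weight conditions read $\sup_R\langle\nu^q\rangle_R\big\langle\prod_i w_i^{c_0}\big\rangle_R^{s-1}<\infty$ and $\sup_R\langle\sigma_i\rangle_R\langle w_i^{d_i}\rangle_R^{s_i-1}<\infty$.

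For the ``only if'' direction I would deduce each of the $m+1$ one-weight conditions from $\vec w\in A_{(\vec p,q),\mathcal R}$ by Hölder's inequality. For $\nu^q$: expand $\prod_i w_i^{c_0}=\prod_i\sigma_i^{a_i}$ with $a_i=\frac{q}{(s-1)p_i'}$; using $\sum_i 1/p_i'=m-1/p$, the admissibility $\sum_i a_i\le1$ reduces to $q\ge p$ in case (i) and exactly to $1/q=1/p-\alpha/n$ in case (ii), so Hölder (with an auxiliary constant factor when $\sum_i a_i<1$) gives $\big\langle\prod_i w_i^{c_0}\big\rangle_R\le\prod_i\langle\sigma_i\rangle_R^{a_i}$, and raising to the power $s-1$ yields $\langle\nu^q\rangle_R\big\langle\prod_i w_i^{c_0}\big\rangle_R^{s-1}\le[\vec w]_{A_{(\vec p,q),\mathcal R}}^{q}$. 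For $\sigma_i$: write $w_i=\nu\prod_{j\neq i}\sigma_j^{1/p_j'}$, hence $w_i^{d_i}=(\nu^q)^{d_i/q}\prod_{j\neq i}\sigma_j^{d_i/p_j'}$, and apply Hölder again; the admissibility of the exponents is once more exactly $q\ge p$, resp.\ $1/q=1/p-\alpha/n$, after the same bookkeeping. (As a shortcut, the case $q=p$ of (i) already follows from Theorem \ref{weight-1}(3) applied to $\omega_i:=w_i^{p_i}$, since then $\widehat{\nu}_{\vec\omega}=\nu^p$ and $\omega_i^{1-p_i'}=\sigma_i$.) When some $p_i=1$ the $i$-th factor of $[\vec w]_{A_{(\vec p,q),\mathcal R}}$ is read as $(\essinf_R w_i)^{-1}$ and $A_{s_i,\mathcal R}$ is replaced by the stated $A_{1,\mathcal R}$-statement; these come from the obvious limiting versions of the computations above.

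For the ``if'' direction the key device is the elementary inequality
$$\Big\langle\prod_{i=1}^m w_i^{c_0}\Big\rangle_R\ \prod_{i=1}^m\langle w_i^{d_i}\rangle_R^{-c_0/d_i}\ \geq\ 1,\qquad c_0<0<d_1,\dots,d_m,$$
obtained by applying Hölder's inequality to $1=\langle 1\rangle_R=\big\langle(\prod_i w_i^{c_0})^{a}\prod_i(w_i^{d_i})^{b_i}\big\rangle_R$, where $b_i:=-ac_0/d_i$ and $a:=(1-c_0\sum_i d_i^{-1})^{-1}$: one has $a,b_1,\dots,b_m\in(0,1)$ with $a+\sum_i b_i=1$, so the conjugate exponents $1/a,1/b_i$ are admissible, and raising the resulting estimate to the power $1/a$ produces the display. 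Now from $\nu^q\in A_{s,\mathcal R}$ one gets $\langle\nu^q\rangle_R^{1/q}\lesssim\big\langle\prod_i w_i^{c_0}\big\rangle_R^{1/c_0}$, and from $\sigma_i\in A_{s_i,\mathcal R}$ one gets $\langle\sigma_i\rangle_R^{1/p_i'}\lesssim\langle w_i^{d_i}\rangle_R^{-1/d_i}$; multiplying these $m+1$ estimates and invoking the display raised to the power $-1/c_0>0$ gives $\langle\nu^q\rangle_R^{1/q}\prod_i\langle\sigma_i\rangle_R^{1/p_i'}\lesssim1$ uniformly in $R$, i.e.\ $\vec w\in A_{(\vec p,q),\mathcal R}$. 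The argument is verbatim the same in (i) and (ii); only $s$ and $s_i$ change, and the matching of exponents is automatic because $d_i$ was defined precisely so that the $A_{s_i,\mathcal R}$ bound outputs $\langle w_i^{d_i}\rangle_R^{-1/d_i}$.

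The only real content is the exponent arithmetic, which is governed entirely by the identity $\sum_i 1/p_i'=m-1/p$ together with $1/q=1/p-\alpha/n$ in (ii) (replaced by the inequality $q\ge p$ in (i)). I expect the ``if'' direction to be the delicate step: one has to check that the reciprocal averages produced by the $A_{s,\mathcal R}$ and $A_{s_i,\mathcal R}$ conditions are exactly those on which the Hölder lemma acts, since any mismatch would leave uncontrolled powers of the weights — this is the single place where the exponent relations are used in an essential way. The degenerate cases $p_i=1$ (which, together with $p=1/m$, forces all $p_i=1$ and, in (i), $q=1/m$, $s=1$) need no new idea: replace the corresponding Lebesgue averages by essential infima and the corresponding $A_{s,\mathcal R}$ classes by $A_{1,\mathcal R}$, as in the statement, and repeat.
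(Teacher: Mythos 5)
Your argument is correct and is the same H\"older-inequality approach the paper relies on by deferring to Theorems 2.4 and 3.11 of \cite{CWX}; the exponent bookkeeping and the Jensen/H\"older lemma you supply are exactly the details the paper leaves implicit. One small clarification of your closing remarks: the relations $q\ge p$ (in (i)) and $1/q=1/p-\alpha/n$ (in (ii)) are consumed only in the ``only if'' direction, to make the H\"older exponents $a_i$ and their analogues for the $w_i^{-p_i'}$ conditions admissible, whereas the Jensen-type display you derive, and hence the ``if'' direction, hold for every $s,s_i>1$, so the delicate step is actually the converse you treated first.
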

%%%%%%%%%%%%%%%%%%%%%%%%%%%%%%%%%%%%%%%

\begin{theorem}\label{weight-3}
Let $1 < p_1, \cdots, p_m < \infty$, $\frac{1}{p} = \frac{1}{p_{1}} + \cdots + \frac{1}{p_{m}}$, $\frac{1}{m} < p \leq q < \infty$ and
$p_0 = \min \{ p_i \}_i$. There holds that
\begin{enumerate}
\item [(a)] $ A_{(\vec{p},q,r_2),\mathcal{R}} \subsetneqq A_{(\vec{p},q,r_1),\mathcal{R}}, \ \text{whenever} \ \
              1 \leq r_1 < r_2 < p_0 $.
\item [(b)] For any $1\leq r_1 < p_0$,
$$ A_{(\vec{p},q,r_1),\mathcal{R}}=\bigcup_{r_1< r < p_0} A_{(\vec{p},q,r),\mathcal{R}},$$
where
$A_{(\vec{p},q,s),\mathcal{R}}:= \Big\{\vec{w}; \vec{w}^s=(w_1^s,\ldots,w_m^s) \in A_{(\frac{\vec{p}}{s},\frac{q}{s}),\mathcal{R}} \Big\}, \ \ \text{for any} \ s \geq 1$.
\end{enumerate}
\end{theorem}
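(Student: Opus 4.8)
The plan is to reduce the whole statement, by means of the one-weight characterization in Theorem~\ref{weight-2}(i), to two elementary facts about a single weight in $A_{t,\mathcal{R}}$ together with one deeper fact, namely the reverse H\"older inequality for strong Muckenhoupt weights. First I record the reduction. Fix $s\in[1,p_0)$; then $1<p_i/s<\infty$, and since $\tfrac1p=\sum_i\tfrac1{p_i}\le \tfrac m{p_0}$ forces $mp\ge p_0>s$ we also have $\tfrac1m\le\tfrac ps\le\tfrac qs$, so Theorem~\ref{weight-2}(i) applies to the tuple $\vec{w}^{\,s}$ with exponents $\vec{p}/s$ and $q/s$ and gives: $\vec{w}\in A_{(\vec{p},q,s),\mathcal{R}}$ if and only if
\[
\nu_{\vec{w}}^{\,q}\in A_{mq/s,\mathcal{R}}\qquad\text{and}\qquad \sigma_i^{\,a_i(s)}\in A_{m(a_i(s)+1),\mathcal{R}}\quad(i=1,\dots,m),
\]
where $\sigma_i:=w_i^{-p_i}$ and $a_i(s):=s/(p_i-s)$; here one uses $\nu_{\vec{w}^{\,s}}^{q/s}=\nu_{\vec{w}}^{\,q}$, $(w_i^s)^{-(p_i/s)'}=\sigma_i^{a_i(s)}$ and $m(p_i/s)'=m(a_i(s)+1)$. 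Note that $s\mapsto a_i(s)$ is strictly increasing and $s\mapsto mq/s$ strictly decreasing on $[1,p_0)$.

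For part~(a) I use only the trivial monotonicity $A_{t_1,\mathcal{R}}\subseteq A_{t_2,\mathcal{R}}$ for $t_1\le t_2$, and the Jensen self-improvement $u\in A_{t,\mathcal{R}},\ 0<\gamma\le1\Rightarrow u^{\gamma}\in A_{1+\gamma(t-1),\mathcal{R}}$. Given $\vec{w}\in A_{(\vec{p},q,r_2),\mathcal{R}}$, the reduction gives $\nu_{\vec{w}}^{\,q}\in A_{mq/r_2,\mathcal{R}}\subseteq A_{mq/r_1,\mathcal{R}}$; applying the self-improvement to $\sigma_i^{a_i(r_2)}$ with $\gamma=a_i(r_1)/a_i(r_2)\in(0,1)$ yields $\sigma_i^{a_i(r_1)}\in A_{1+\gamma(m(a_i(r_2)+1)-1),\mathcal{R}}$, and a one-line check using $a_i(r_1)<a_i(r_2)$ shows $1+\gamma(m(a_i(r_2)+1)-1)\le m(a_i(r_1)+1)$, so by monotonicity $\sigma_i^{a_i(r_1)}\in A_{m(a_i(r_1)+1),\mathcal{R}}$; the reduction then gives $\vec{w}\in A_{(\vec{p},q,r_1),\mathcal{R}}$. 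Strictness will come from one-variable power weights $w_i(x)=|x_1|^{\beta_i}$ on $\Rn$: by the product structure of rectangles the defining supremum factors over coordinates, and the reduction turns membership in $A_{(\vec{p},q,s),\mathcal{R}}$ into the explicit conditions $-1<q\sum_i\beta_i<mq/s-1$ and $-(\tfrac{m-1}{s}+\tfrac1{p_i})<\beta_i<\tfrac1s-\tfrac1{p_i}$; since all these admissible ranges shrink as $s$ increases, one can choose $\vec{\beta}$ inside the $r_1$-ranges but on the wrong side of an $r_2$-endpoint. (In the borderline case $m=1$, $p=q$ one argues directly via $A_{(\vec{p},p,s),\mathcal{R}}=\{w:\ w^p\in A_{p/s,\mathcal{R}}\}$ and the classical strict monotonicity of the scale $A_{t,\mathcal{R}}$.)

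For part~(b) the inclusion $\bigcup_{r_1<r<p_0}A_{(\vec{p},q,r),\mathcal{R}}\subseteq A_{(\vec{p},q,r_1),\mathcal{R}}$ is immediate from part~(a). For the reverse inclusion I invoke the reverse H\"older inequality for $A_{t,\mathcal{R}}$ in the two forms it yields: the openness $u\in A_{t,\mathcal{R}}\Rightarrow u\in A_{t-\varepsilon,\mathcal{R}}$ for some $\varepsilon>0$, and $u\in A_{t,\mathcal{R}}\Rightarrow u^{1+\delta}\in A_{t,\mathcal{R}}$ for some $\delta>0$. Starting from $\vec{w}\in A_{(\vec{p},q,r_1),\mathcal{R}}$ and the reduction at $s=r_1$, pick $\delta_0>0$ with $\nu_{\vec{w}}^{\,q}\in A_{mq/r_1-\delta_0,\mathcal{R}}$ and, for each $i$, $\delta_i>0$ with $\sigma_i^{a_i(r_1)(1+\delta_i)}\in A_{m(a_i(r_1)+1),\mathcal{R}}$. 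By continuity of $a_i$ choose $r\in(r_1,p_0)$ so close to $r_1$ that $mq/r>mq/r_1-\delta_0$ and $a_i(r)\le a_i(r_1)(1+\delta_i)$ for all $i$. Then $\nu_{\vec{w}}^{\,q}\in A_{mq/r_1-\delta_0,\mathcal{R}}\subseteq A_{mq/r,\mathcal{R}}$ by monotonicity, and writing $\gamma=a_i(r)/(a_i(r_1)(1+\delta_i))\in(0,1]$, the Jensen self-improvement gives $\sigma_i^{a_i(r)}=(\sigma_i^{a_i(r_1)(1+\delta_i)})^{\gamma}\in A_{1+\gamma(m(a_i(r_1)+1)-1),\mathcal{R}}$, while the elementary inequality $mc(b-b')\le(m-1)(b'-c)$ with $b=a_i(r_1)\le c=a_i(r)\le b'=a_i(r_1)(1+\delta_i)$ places this class inside $A_{m(a_i(r)+1),\mathcal{R}}$. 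By the reduction, $\vec{w}\in A_{(\vec{p},q,r),\mathcal{R}}$, finishing part~(b). The main obstacle, as always for such self-improvement statements, is the reverse H\"older inequality for the rectangle (strong Muckenhoupt) basis; everything else is bookkeeping with Jensen's inequality and the monotonicity of the $A_t$ scale, organized around the reduction displayed above, and I would cite the reverse H\"older inequality for $A_{t,\mathcal{R}}$ rather than reprove it.
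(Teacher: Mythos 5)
Your proposal is correct, and it takes a genuinely different route from the paper's. The paper dispatches Theorems 3.1--3.3 in one stroke by referring to the H\"older-inequality arguments of Theorems 2.4 and 3.11 in \cite{CWX} (adapted verbatim because they use no geometry of cubes), plus Proposition~\ref{x-alpha} for the strictness in parts (1) and (a). You instead route the whole theorem through the factorization characterization Theorem~\ref{weight-2}(i): applied at scale $s$ to $\vec{w}^{\,s}$, the multilinear condition $\vec{w}\in A_{(\vec p,q,s),\mathcal R}$ unfolds into a conjunction of single-weight $A_{t,\mathcal R}$ conditions for $\nu_{\vec w}^{\,q}$ and for $\sigma_i^{a_i(s)}$ with exponents varying monotonically in $s$, and both parts (a) and (b) then reduce to three classical single-weight facts: monotonicity of $t\mapsto A_{t,\mathcal R}$, the Jensen self-improvement $u\in A_{t,\mathcal R}\Rightarrow u^\gamma\in A_{1+\gamma(t-1),\mathcal R}$ for $0<\gamma\le1$ (which uses nothing but H\"older per rectangle, so carries over), and the reverse H\"older inequality for $A_{t,\mathcal R}$ (Theorem~6.7 in \cite{C-Rubio}, which the paper itself invokes in the proof of Theorem~\ref{Theorem one-weight}). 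I checked your two elementary inequalities, $1+\gamma(m(a_i(r_2)+1)-1)\le m(a_i(r_1)+1)$ in part (a) and $mc(b-b')\le(m-1)(b'-c)$ in part (b); both are correct, and the rewritten power-weight conditions $-1<q\sum_i\beta_i<mq/s-1$ and $-(\tfrac{m-1}{s}+\tfrac1{p_i})<\beta_i<\tfrac1s-\tfrac1{p_i}$ are right. What your approach buys is modularity and transparency about where reverse H\"older is really needed (part (b)); what it costs is that it is not more self-contained than the paper, since Theorem~\ref{weight-2}(i) is itself only proved in this paper by reference to \cite{CWX}. Two small points of presentation: the strictness claim in part (a) is stated a bit tersely --- one should check that the chosen $\vec\beta$ (say $\beta_i=0$ for $i\ge2$ and $\beta_1$ just above $1/r_2-1/p_1$) actually also satisfies the $r_1$ sum-constraint $q\sum_i\beta_i<mq/r_1-1$, which reduces, using $r_1<p_0\le p_1$ and $q\ge p$, to $1\le(m-1)q/r_1+q/p_1$ and does hold; and you use $|x_1|^{\beta_i}$ where the paper uses $|x|^{\beta_i}$ (via Proposition~\ref{x-alpha}), but both give the same one-dimensional $A_p$ condition $-1<\beta<p-1$ in the rectangle setting, so either works.
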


\vspace{0.3cm}
\noindent\textbf{Proofs of Theorems 3.1-3.3.}
The argument used in Theorem 2.4 and Theorem 3.11 in \cite{CWX} relies only on the use of H\"{o}lder's inequality, and it doesn't involve any geometric property of cubes or rectangles. Hence we may also use the methods in \cite{CWX} to complete our proof. Since the main ideas are almost the same, we omit the proof here. It is worth mentioning that when considering the strict inclusion relationship in Theorem $\ref{weight-1}$ $(1)$ and Theorem $\ref{weight-3}$ $(a)$, we need the characterization of $|x|^\alpha \in A_{p,\mathcal{R}}$, which will be shown in Proposition $\ref{x-alpha}$ below.

\qed

%%%%%%%%%%%%%%%%%%%%%%%%%%%%%%
The notation $\mathcal{DR}$ will always denote the family of all dyadic rectangles in $\Rn$ with sides parallel to the axes.
Now, we recall the definition of the dyadic reverse doubling condition.
\begin{definition}[\textbf{Dyadic reverse doubling condition}, \cite{CXY}]
We say that a nonnegative measurable function $\omega$ satisfies the dyadic reverse doubling condition,
or $\omega \in RD^{(d)}$, if $\omega$ is locally integrable on $\Rn$ and there is a constant $d>1$ such that
$$ d \int_I \omega(x)dx \leq \int_J \omega(x)dx $$
for any $I,J \in \mathcal{DR}$, where $I \subset J$ and $|I| = \frac{1}{2^n} |J|$.
\end{definition}
%%%%%%%%%%%%% Proposition 3.4 %%%%%%%%%%%%%%%%%%%%%%%
\begin{proposition}
There holds that
$A_{\infty,\mathcal{R}}(\Rn) \subsetneqq RD^{(d)}(\Rn)$, for any $n \geq 2$.

\end{proposition}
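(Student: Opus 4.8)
The plan is to prove this in two parts: first the inclusion $A_{\infty,\mathcal{R}} \subseteq RD^{(d)}$, then the strictness of the inclusion. For the inclusion, I would start from the standard characterization of $A_{\infty,\mathcal{R}}$: there exist constants $C > 0$ and $\delta > 0$ such that for every rectangle $R \in \mathcal{R}$ and every measurable $E \subset R$,
$$
\frac{\omega(E)}{\omega(R)} \leq C \left(\frac{|E|}{|R|}\right)^{\delta}.
$$
Now take $I, J \in \mathcal{DR}$ with $I \subset J$ and $|I| = 2^{-n}|J|$. Applying the above with $R = J$ and $E = J \setminus I$ (so $|E|/|J| = 1 - 2^{-n}$), we get
$$
\frac{\omega(J) - \omega(I)}{\omega(J)} \leq C(1 - 2^{-n})^{\delta} =: 1 - \tfrac{1}{d},
$$
where $d := (1 - C(1-2^{-n})^{\delta})^{-1}$. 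This requires $C(1-2^{-n})^{\delta} < 1$; if the constant $C$ from the $A_\infty$ characterization is too large, one first passes to a subrectangle decomposition — split $J$ into $2^{kn}$ congruent dyadic subrectangles for $k$ large, iterate the reverse-doubling-type estimate, and use that $(1-2^{-n})$ raised to a power can be made as small as desired, or more cleanly observe that the self-improving form of $A_\infty$ gives the estimate with $C$ independent of scale so one can simply choose $k$ with $C(1-2^{-kn})^{\delta}\cdot(\text{telescoping factor}) < 1$. Rearranging yields $d\,\omega(I) \leq \omega(J)$ with $d > 1$, which is exactly $\omega \in RD^{(d)}$.

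For the strictness, the point is to exhibit $\omega \in RD^{(d)}(\Rn)$ for some $d>1$ with $\omega \notin A_{\infty,\mathcal{R}}$. The natural candidate exploits that $RD^{(d)}$ only controls growth under dyadic halving in a single "coordinated" way, whereas $A_{\infty,\mathcal{R}}$ must control all rectangles of arbitrary eccentricity. I would try a weight of the form $\omega(x) = w_1(x_1)\cdots w_n(x_n)$ (or a partial product) where, say, $\omega(x) = |x_1|^{a}$ depends only on $x_1$. A weight depending on only one variable is never in $A_{\infty,\mathcal{R}}$ when $n \geq 2$ unless it is essentially constant, because a very thin, very long rectangle in the $x_1$-direction concentrated near $x_1 = 0$ violates the $A_\infty$ ratio bound (this is where $n \geq 2$ enters). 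On the other hand, such a product weight — e.g. with each factor a power weight $|x_j|^{a_j}$ with $a_j > -1$, or even $w_j \equiv 1$ for $j \geq 2$ and $w_1$ a mild power — does satisfy the dyadic reverse doubling condition, since under a dyadic split $|I| = 2^{-n}|J|$ each coordinate direction is halved and $\int_I w_j \leq (1-c)\int_J w_j$ per direction for some uniform $c > 0$, and multiplying gives a uniform gain $d > 1$. One should invoke Proposition \ref{x-alpha} (the characterization of $|x|^\alpha \in A_{p,\mathcal{R}}$) here to certify that the chosen power weight fails $A_{\infty,\mathcal{R}}$.

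The main obstacle is the strictness direction — specifically, producing a single explicit weight and verifying both that it lies in $RD^{(d)}$ (an easy direct computation once the weight is a coordinate product of power weights) and that it escapes $A_{\infty,\mathcal{R}}$ (which is where one genuinely needs the geometry of eccentric rectangles and the hypothesis $n\geq 2$, and where Proposition \ref{x-alpha} does the bookkeeping). The inclusion direction is routine modulo chasing the constant $C$ through a scale-invariant form of the $A_\infty$ condition. I would organize the write-up as: (1) recall the quantitative $A_\infty$ characterization and derive $RD^{(d)}$; (2) set $\omega(x) = |x_1|^{a}$ for a suitable $a \neq 0$, check $\omega \in RD^{(d)}$ by a one-line product estimate over coordinates, and check $\omega \notin A_{\infty,\mathcal{R}}$ via Proposition \ref{x-alpha}, completing the proof.
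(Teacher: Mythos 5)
The strictness direction in your plan has a genuine error. Your proposed counterexample $\omega(x)=|x_1|^{a}$ does not leave $A_{\infty,\mathcal R}$. For $\omega$ to be locally integrable on $\Rn$ one needs $a>-1$, and then $|t|^{a}\in A_{\infty}(\R)$. Since for any coordinate-product weight $w(x)=\prod_j w_j(x_j)$ and rectangle $R=\prod_j R_j$ the $A_p$ quantity factors, $\bigl(\tfrac{1}{|R|}\int_R w\bigr)\bigl(\tfrac{1}{|R|}\int_R w^{1-p'}\bigr)^{p-1}=\prod_j\bigl(\tfrac{1}{|R_j|}\int_{R_j}w_j\bigr)\bigl(\tfrac{1}{|R_j|}\int_{R_j}w_j^{1-p'}\bigr)^{p-1}$, one has $|x_1|^{a}\in A_{p,\mathcal R}(\Rn)$ exactly when $|t|^{a}\in A_p(\R)$, i.e.\ for $-1<a<p-1$, so $|x_1|^{a}\in A_{\infty,\mathcal R}(\Rn)$ for every admissible $a$. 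Your assertion that a nonconstant one-variable weight is never in $A_{\infty,\mathcal R}$ when $n\ge 2$ is therefore false; in fact the thin-and-long-rectangle heuristic cuts the other way, since such rectangles only probe the one-dimensional $A_p$ behavior. Note also that Proposition~\ref{x-alpha} concerns the radial weight $|x|^{\alpha}$, not $|x_1|^{a}$; if you instead tried $|x|^{\alpha}$ you would need $-n<\alpha\le -1$ to escape $A_{\infty,\mathcal R}$, but then the $RD^{(d)}$ check is no longer a simple coordinate-by-coordinate product estimate.

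The paper's counterexample is $w(x)=(1+|x_n|)^{-2}$, which evades both difficulties: the one-dimensional factor $(1+|t|)^{-2}$ is bounded (so locally integrable) yet fails $A_\infty(\R)$ because it has \emph{finite total mass}, not a local singularity. Concretely, with $E_\ell=[0,2^\ell)^{n-1}\times[0,1)\subset R_\ell=[0,2^\ell)^n$ one finds $w(E_\ell)/w(R_\ell)\to 1/2$ while $|E_\ell|/|R_\ell|=2^{-\ell}\to 0$, so no bound $w(E)/w(R)\le C(|E|/|R|)^\delta$ can hold. Meanwhile $w\in RD^{(d)}$ with $d=2^{n-1}$ for free: halving each of the first $n-1$ sides contributes an exact factor $2^{-1}$ since $w$ is constant in those variables, and halving the $n$-th side can only decrease $\int w_n$; this is precisely where $n\ge 2$ is used. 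Your inclusion argument also has a direction error: applying $\omega(E)/\omega(J)\le C(|E|/|J|)^\delta$ to $E=J\setminus I$ gives a \emph{lower} bound $\omega(I)/\omega(J)\ge 1-C(1-2^{-n})^\delta$, whereas $RD^{(d)}$ requires the \emph{upper} bound $\omega(I)/\omega(J)\le 1/d$; one should instead use the equivalent reverse form $|E|/|R|\le C'(\omega(E)/\omega(R))^{\delta'}$ applied to $J\setminus I$. In any case the paper does not reprove the inclusion but cites Proposition~4.2 of \cite{CXY}.
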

%%%%%%%%%%%%%%%%%%% end Proposition 3.4 %%%%%%%%%%%%%%%%%%%%
\begin{proof}
The inclusion relationship $A_{\infty,\mathcal{R}}(\Rn) \subset RD^{(d)}(\Rn)$ has
 been proved in Proposition 4.2 \cite{CXY}.
Thus, it suffices to show that there exists some weight
$w \in RD^{(d)}(\Rn) \setminus \bigcup_{1\le p<\infty} A_{p,\mathcal R}$.
In fact, let
$$
w(x_1,\dots,x_{n-1},x_n)=\frac{1}{(1+|x_n|)^2}, \
w_1(x_1,\dots,x_{n-1})=1, \
w_n(x_n)=\frac{1}{(1+|x_n|)^2}.
$$
Then, for any $I,J\in \mathcal D\mathcal R$, where $I\subset J$ and has half
side length of $J$, we have
\begin{equation*}
w(I)=\frac{1}{2^{n-1}}w_1(J_1\times\cdots\times J_{n-1})w_n(I_n),
\end{equation*}
where $I=I_1\times\cdots\times I_{n-1}\times I_n$,
$J=J_1\times\cdots\times J_{n-1}\times J_n$.
Hence
\begin{equation*}
{2^{n-1}}w(I)\le w(J).
\end{equation*}
This shows that $w\in RD^{(d)}(\Rn)$ for any $1<d\le 2^{n-1}$.

Now, we are in the position to show that $w\notin A_{\infty,\mathcal R}(\Rn)$.
Denote
$$
R_\ell :=[0,2^\ell)^n \ \text{and} \ E_\ell
:= [0,2^\ell)^{(n-1)}\times[0,1), \ \text{for each} \ \ell \in \N_+.
$$
Then, we have
\begin{equation*}
w(R_\ell)=2^{\ell(n-1)}\int_{0}^{2^\ell}\frac{dt}{(1+t)^2}
=2^{\ell( n-1)}(1-\frac{1}{1+2^\ell})=\frac{2^{\ell n}}{1+2^\ell}.
\end{equation*}
Therefore,
\begin{equation*}
\frac{w(E_\ell)}{w(R_\ell)}
=2^{\ell(n-1)}\cdot\frac12 \Big/ \frac{2^{\ell n}}{1+2^\ell}
=\frac12(1+\frac{1}{2^\ell}),
\end{equation*}
and
\begin{equation*}
\frac{|E_\ell|}{|R_\ell|}=\frac{1}{2^\ell}.
\end{equation*}
Hence there exists no $C,\delta>0$ such that
\begin{equation*}
\frac{w(E_\ell)}{w(R_\ell)}
\leq C \left(\frac{|E_\ell|}{|R_\ell|}\right)^{\delta},\quad \quad \ell \in \N_+.
\end{equation*}
which implies $w\notin A_{\infty,\mathcal R}(\Rn)$.

\end{proof}
%%%%%%%%% end Proof of  Proposition 3.4 %%%%%%%%%%%%%%%%%%%%%%%

%%%%%%%%%%%%% Proposition 3.5 %%%%%%%%%%%%%%%%%%%%%%%
\begin{proposition}\label{x-alpha}
Let $1 < p < \infty$. The strong Muckenhoupt weight has the characterization:
$|x|^\alpha\in A_{p,\mathcal R}(\Rn)$ if and only if $-1<\alpha<p-1$.
\end{proposition}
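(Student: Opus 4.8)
The plan is to prove the two implications by quite different means. For \emph{necessity} I will argue contrapositively and exploit the eccentricity of $\mathcal{R}$. First note the self-duality $w\in A_{p,\mathcal{R}}\iff w^{1-p'}\in A_{p',\mathcal{R}}$; since $\bigl(|x|^{\alpha}\bigr)^{1-p'}=|x|^{-\alpha/(p-1)}$, the endpoint $\alpha\ge p-1$ is transformed into the endpoint $-\alpha/(p-1)\le -1$ for the exponent $p'$, so it suffices to show that $\alpha\le -1$ forces $|x|^{\alpha}\notin A_{p,\mathcal{R}}(\Rn)$. To see this I will test the $A_{p,\mathcal{R}}$ functional on the thin tubes $R_{\delta}:=[-1,1]\times[-\delta,\delta]^{\,n-1}$, $0<\delta\le \tfrac12$. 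On $R_{\delta}\cap\{|x_{1}|\ge\delta\}$ one has $|x|^{2}=x_{1}^{2}+|x'|^{2}\le n\,x_{1}^{2}$, hence $|x|^{\alpha}\gtrsim_{n,\alpha}|x_{1}|^{\alpha}$ because $\alpha<0$; Fubini then reduces $\frac{1}{|R_{\delta}|}\int_{R_{\delta}}|x|^{\alpha}\,dx$ to a one-dimensional power integral comparable to $\int_{\delta}^{1}t^{\alpha}\,dt$, which is $\gtrsim\delta^{\alpha+1}$ when $\alpha<-1$ and equals $\log(1/\delta)$ when $\alpha=-1$. At the same time $\frac{1}{|R_{\delta}|}\int_{R_{\delta}}|x|^{-\alpha/(p-1)}\,dx$ is bounded below by a positive constant independent of $\delta$ (restrict to $\{\tfrac12\le x_{1}\le 1\}$, where $|x|\ge\tfrac12$ and the exponent $-\alpha/(p-1)$ is positive). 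Multiplying the two factors, the $A_{p,\mathcal{R}}$ quotient of $R_{\delta}$ diverges as $\delta\to0^{+}$, which is the desired contradiction.

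For \emph{sufficiency}, assume $-1<\alpha<p-1$ and invoke the well-known description of strong Muckenhoupt weights (see, e.g., \cite{C-Rubio}): $w\in A_{p,\mathcal{R}}(\Rn)$ if and only if there is a constant $C$ so that, for each coordinate $i$ and for a.e.\ choice of the remaining variables, the one-variable slice of $w$ in $x_{i}$ lies in $A_{p}(\mathbb{R})$ with constant at most $C$. For $w=|x|^{\alpha}$ the slice in $x_{1}$ is $t\mapsto (t^{2}+a^{2})^{\alpha/2}$ with $a=|(x_{2},\dots,x_{n})|\ge 0$, and by symmetry the same holds in every variable. Since the scalar $A_{p}(\mathbb{R})$ constant is invariant under dilations and multiplication by positive constants, the substitution $t=as$ gives $\bigl[(t^{2}+a^{2})^{\alpha/2}\bigr]_{A_{p}(\mathbb{R})}=\bigl[(t^{2}+1)^{\alpha/2}\bigr]_{A_{p}(\mathbb{R})}$ for all $a>0$, while $a=0$ yields $|t|^{\alpha}$. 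Hence
\[
\sup_{a\ge0}\bigl[(t^{2}+a^{2})^{\alpha/2}\bigr]_{A_{p}(\mathbb{R})}
=\max\Bigl(\bigl[|t|^{\alpha}\bigr]_{A_{p}(\mathbb{R})},\ \bigl[(1+t^{2})^{\alpha/2}\bigr]_{A_{p}(\mathbb{R})}\Bigr),
\]
and both quantities on the right are finite for $-1<\alpha<p-1$: the first is the classical one-dimensional power-weight fact, and the second follows from the pointwise comparison $(1+t^{2})^{\alpha/2}\approx(1+|t|)^{\alpha}$ together with the equally classical statement that $(1+|t|)^{\alpha}\in A_{p}(\mathbb{R})$ precisely for $-1<\alpha<p-1$. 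The cited characterization then yields $|x|^{\alpha}\in A_{p,\mathcal{R}}(\Rn)$.

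The conceptual content is small; the two places to be careful are the \emph{choice of test rectangles} in the necessity direction --- the whole point is that mere local integrability of $|x|^{\alpha}$ only needs $\alpha>-n$, and it is exactly the collapse of $\mathcal{R}$ under unbounded eccentricity, witnessed by the tubes $R_{\delta}$, that upgrades this to $\alpha>-1$ --- and, in the sufficiency direction, the clean packaging of the one-dimensional reduction so that uniformity in the frozen variable $a$ is supplied for free by the scale invariance of $[\,\cdot\,]_{A_{p}(\mathbb{R})}$. I will also record the $\log(1/\delta)$ computation at $\alpha=-1$ (and its dual at $\alpha=p-1$) so that the strictness of both inequalities is explicit.
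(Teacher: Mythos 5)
Your proposal is correct, and it follows a genuinely different route from the one in the paper. The paper derives the result from Kurtz's Lemma~2.2 together with Duoandikoetxea's radial weight classes $\widetilde{A}_p(\R_+)$, $\widetilde{A}_1(\R_+)$ and a theorem of Yabuta linking those classes to $A_{p,\mathcal{R}}(\Rn)$; it only spells out the sufficiency direction (splitting $-1<\alpha\le0$ from $0<\alpha<p-1$), leaving necessity to the cited Kurtz lemma. You instead (i) prove necessity by hand, testing the $A_{p,\mathcal{R}}$ functional on the degenerating tubes $R_\delta=[-1,1]\times[-\delta,\delta]^{n-1}$ and using $A_p$/$A_{p'}$ duality to transfer $\alpha\le-1$ to $\alpha\ge p-1$, and (ii) prove sufficiency via the classical slice characterization of strong $A_p$ weights (each coordinate slice lies uniformly in $A_p(\R)$, cf.\ \cite{C-Rubio}), reducing the uniformity over the frozen radial parameter $a$ to the single extremal weights $|t|^\alpha$ and $(1+t^2)^{\alpha/2}$ by the scale- and scalar-invariance of $[\,\cdot\,]_{A_p(\R)}$. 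Your approach is more self-contained and elementary --- it needs only the slice characterization and one-dimensional power-weight facts rather than the specialized radial-weight machinery --- and it makes both implications and both endpoint failures explicit. One small caveat worth surfacing when you write it up: the slice characterization (strong $A_p$ $\Leftrightarrow$ uniform one-variable $A_p$ in each coordinate) should be quoted precisely from the reference, since the implication from $A_{p,\mathcal{R}}$ to uniform slices requires a Lebesgue-differentiation argument to pass from rectangular averages to almost-every slices; the converse direction (the one you actually need for sufficiency) is the clean iterated-H\"older step.
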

%%%%%%%%%%%%% end Proposition 3.5 %%%%%%%%%%%%%%%%%%%%%%%
Although this proposition is contained in \cite{Kurtz}, we here present
a new proof.
\begin{proof}
This follows from Lemma 2.2 in Kurtz \cite[p.~239]{Kurtz}, and the following
fact:
\begin{equation}\label{eq:(1+|x|)a-Ap}
w(t)=(1+|t|)^\alpha\in A_p(\R)\ \text{ if and only if } -1<\alpha<p-1.
\end{equation}
In the case $-1<\alpha\le 0$, we see that $t^\alpha\in A_1(\R_+)$ and is
decreasing. So, $|x|^\alpha\in \widetilde A_1(\R_+)$, and hence by Theorem
4.4 in \cite{Yabuta} it belongs to
$A_{1,\mathcal R}(\Rn)\subset A_{p,\mathcal R}(\Rn)$.

\noindent
In the case $0<\alpha<p-1$, we have $-1<\alpha/(1-p)<0$, and so
$t^{\alpha/(1-p)}\in A_1(\R_+)$ and is decreasing. Hence
$|x|^\alpha=(|x|^{\alpha/(1-p)})^{1-p}\in \widetilde A_p(\R_+)$, and so, as
before, it belongs to $A_{p,\mathcal R}(\Rn)$.

Here, $$\widetilde A_p(\mathbb R_+):=\{
\omega(x)=\nu_1(|x|)\nu_2(|x|)^{1-p};\ \nu_1,\nu_2\in A_1(\mathbb R_+)\
\text{ are decreasing or}\ \nu_1^2,\nu_2^2\in A_1(\mathbb R_+)\}$$ and
$$\tilde A_1(\mathbb R_+):=\{
\omega(x)=\nu_1(|x|);\ \nu_1\in A_1(\mathbb R_+)\
\text{ is decreasing or}\ \nu_1^2\in A_1(\mathbb R_+)\},$$ which are
 the weight classes introduced by Duoandikoetxea \cite{Duo}.
\end{proof}
%%%%%%%%% end Proof of  Proposition 3.5 %%%%%%%%%%%%%%%%%%%%%%%

%%%%%%%%%%%%%%%%%%%%%%%%%%%%%%%%%%%%%%%%%%%%%%%%%%%%%%%%%%%%%%%%%%%%%%%%%%%%%%%%%%
\section{Proofs of Theorem $\ref{Theorem endpoint}$ and Theorem \ref{Theorem one-weight}}\label{Sec-1-5}

To show the endpoint estimate of $\mathcal{M}_{\mathcal{R},\alpha}$, we need the following key lemma.
\begin{lemma}[\cite{GLPT}]\label{Phi-Phi-m}
Let $m \in \N$, and $E$ be any set. Let $\Phi$ be a submultiplicative Young function. If there is a constant $C$ such that
$$
1 < \prod_{i=1}^m ||f_i||_{\Phi,E}
$$
holds, then it yields that
$$
\prod_{i=1}^m ||f_i||_{\Phi,E} \leq C \prod_{i=1}^m \frac{1}{|E|}\int_{E} \Phi^{(m)}(|f_i(x)|)dx.
$$
\end{lemma}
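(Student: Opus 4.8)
The plan is to prove the statement of Lemma~\ref{Phi-Phi-m} by reducing it, via homogeneity and the definition of the Luxemburg norm, to a single Jensen-type inequality for the submultiplicative Young function $\Phi$, iterated $m$ times. First I would normalize: set $\lambda_i = \|f_i\|_{\Phi,E}$ and $g_i = f_i/\lambda_i$, so that by the definition of the $\Phi$-norm one has $\frac{1}{|E|}\int_E \Phi(|g_i|)\,dx \le 1$ for each $i$. The hypothesis then reads $\prod_{i=1}^m \lambda_i > 1$, and the conclusion to be established becomes
$$
\prod_{i=1}^m \lambda_i \;\le\; C\,\prod_{i=1}^m \frac{1}{|E|}\int_E \Phi^{(m)}(\lambda_i |g_i(x)|)\,dx .
$$
So the whole game is to bound $\lambda_i$ from above by a constant multiple of $\frac{1}{|E|}\int_E \Phi^{(m)}(\lambda_i|g_i|)\,dx$ — but only after using the joint information $\prod \lambda_i > 1$, since no single $\lambda_i$ need exceed $1$.

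The key mechanism is the submultiplicativity $\Phi(st)\le \Phi(s)\Phi(t)$ together with Jensen's inequality for the convex function $\Phi^{(m)}$. The second step is to observe that, writing $\mu = \prod_{i=1}^m \lambda_i > 1$, one can distribute the "mass" $\mu$ across the $m$ integrals. Concretely, I would estimate, for each fixed $i$,
$$
\frac{1}{|E|}\int_E \Phi^{(m)}\big(\lambda_i|g_i(x)|\big)\,dx
\;\ge\; \frac{1}{C}\,\Phi^{(m-1)}\!\Big(\lambda_i \,\Phi\big(\tfrac{1}{|E|}\textstyle\int_E |g_i|\,dx\big)\Big)
$$
is the wrong direction; instead the correct route is to peel off compositions from the outside. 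Since $\Phi^{(m)} = \Phi \circ \Phi^{(m-1)}$ and $\Phi$ is increasing and convex with $\Phi(0)=0$ (hence $\Phi(t)/t$ is nondecreasing, so $\Phi(ct)\ge c\,\Phi(t)$ for $c\ge 1$ and $\Phi(ct)\le c\,\Phi(t)$ for $c\le 1$), I can repeatedly use submultiplicativity to pull the scalar $\lambda_i$ inside past each copy of $\Phi$ at the cost of factors $\Phi(\lambda_i^{1/?})$, and then apply Jensen to move $\Phi^{(m)}$ outside the average: $\frac{1}{|E|}\int_E \Phi^{(m)}(\lambda_i|g_i|)\,dx \ge \Phi^{(m)}\big(\frac{1}{|E|}\int_E \lambda_i|g_i|\,dx\big)$ is again the wrong direction. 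The honest approach, and the one I would actually carry out, follows the argument in \cite{GLPT}: use the elementary fact that for a submultiplicative Young function, $\Phi^{(m)}(ab)\le \Phi^{(m)}(a)\,\Phi^{(m)}(b)$ is false in general, but one does have a ``telescoping'' bound $t \le C\,\Phi^{(m)}(t)/\Phi^{(m)}(1)$-type control only for $t$ large. Hence one splits into the region where $\lambda_i \ge 1$ and where $\lambda_i < 1$.

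Therefore the decisive step — and the one I expect to be the main obstacle — is the bookkeeping that handles the indices $i$ with $\lambda_i < 1$. For those $i$ we cannot hope to recover $\lambda_i$ from the right-hand side directly; instead, since $\mu = \prod \lambda_i > 1$, there must be indices with $\lambda_i$ large, and we ``borrow'' the excess. Concretely, I would argue as follows: by submultiplicativity, for any $i$,
$$
1 \;\ge\; \frac{1}{|E|}\int_E \Phi(|g_i|)\,dx, \qquad\text{hence}\qquad \frac{1}{|E|}\int_E \Phi^{(m)}\big(\lambda_i|g_i|\big)\,dx \;\ge\; \frac{1}{C_\Phi^{\,m}}\,\Phi^{(m-1)}(\lambda_i)\cdot\frac{1}{|E|}\int_E \Phi(|g_i|)\,dx^{?}
$$
is still not quite clean, so the right formulation is to prove, purely for Young functions, the pointwise inequality $\Phi^{(m)}(\lambda s) \ge c(\Phi)\,\frac{\lambda}{(\,\prod_{j\ne i}\lambda_j\,)}\,\Phi^{(m)}(s)$ valid when $\prod_j \lambda_j \ge 1$ and $\lambda = \lambda_i$ — obtained by $m$-fold submultiplicativity: write $\lambda_i = \mu \cdot \prod_{j\ne i}\lambda_j^{-1}$ and push $\mu\ge 1$ outward. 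Then averaging over $E$, using $\frac{1}{|E|}\int_E \Phi^{(m)}(|g_i|)\ge \Phi^{(m)}\big(\frac{1}{|E|}\int_E |g_i|\big)$ is not needed; we only need $\frac{1}{|E|}\int_E \Phi^{(m)}(|g_i|)\,dx \ge$ a constant, which is false unless $g_i$ is bounded below — so in fact the clean statement is the one in the lemma precisely because the product is taken. I would finish by multiplying the $m$ resulting inequalities: the factors $\prod_{j\ne i}\lambda_j^{-1}$ multiply to $\mu^{-(m-1)}$, the $\lambda_i$'s multiply to $\mu$, and $\mu\cdot \mu^{-(m-1)} = \mu^{-(m-2)}$; a final use of $\mu > 1$ (when $m\ge 2$; the case $m=1$ being the definition of the norm) absorbs this, yielding $\mu \le C\prod_i \frac{1}{|E|}\int_E \Phi^{(m)}(\lambda_i|g_i|)\,dx$ as required. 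The main obstacle is thus making this ``mass redistribution'' rigorous with the correct direction of every monotonicity/convexity inequality for $\Phi$, since submultiplicativity and Jensen pull in opposite directions and one must be careful to apply each only where the scalar is on the correct side of $1$; this is exactly the point where I would follow the proof in \cite{GLPT} closely rather than reinvent it.
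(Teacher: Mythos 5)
The paper gives no proof of Lemma~\ref{Phi-Phi-m}: it is quoted verbatim from \cite{GLPT}, and the reader is referred there. So the only thing to assess is whether your attempt stands on its own, and it does not. The opening normalization --- setting $\lambda_i=\|f_i\|_{\Phi,E}$, $g_i=f_i/\lambda_i$, and reducing the claim to $\prod_i\lambda_i\le C\prod_i\frac1{|E|}\int_E\Phi^{(m)}(\lambda_i|g_i|)\,dx$ --- is sound. But what follows is a sequence of trial applications of Jensen and submultiplicativity, each of which you yourself flag as ``the wrong direction,'' ending with the concession that you ``would follow the proof in \cite{GLPT} closely rather than reinvent it.'' That is a roadmap of where a proof would have to go, not a proof.

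The one concrete mechanism you do propose is the pointwise inequality
\begin{equation*}
\Phi^{(m)}(\lambda_i s)\;\ge\; c(\Phi)\,\frac{\lambda_i}{\prod_{j\ne i}\lambda_j}\,\Phi^{(m)}(s),\qquad \prod_j\lambda_j\ge1,
\end{equation*}
claimed to follow from ``$m$-fold submultiplicativity'' by ``pushing $\mu\ge1$ outward.'' This is unjustified and the mechanism cannot work as stated: submultiplicativity of $\Phi$ (hence of $\Phi^{(m)}$) gives \emph{upper} bounds $\Phi^{(m)}(ab)\le\Phi^{(m)}(a)\Phi^{(m)}(b)$, and convexity with $\Phi(0)=0$ gives $\Phi^{(m)}(ct)\ge c\,\Phi^{(m)}(t)$ only when $c\ge1$; neither yields a lower bound with the factor $\lambda_i/\prod_{j\ne i}\lambda_j$, which can be arbitrarily small. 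Even granting the inequality and multiplying over $i$, the $\lambda$-factors telescope to $\mu^{2-m}$ (you compute this correctly), and for $m\ge2$ and $\mu>1$ this is $\le1$ --- the wrong direction. The remaining factor $\prod_i\frac1{|E|}\int_E\Phi^{(m)}(|g_i|)\,dx$ is left with no lower bound; indeed you concede in the same paragraph that such a bound ``is false unless $g_i$ is bounded below.'' So the argument does not close. The genuine difficulty --- extracting a usable lower bound on $\frac1{|E|}\int_E\Phi^{(m)}(|f_i|)\,dx$ from the normalization $\frac1{|E|}\int_E\Phi(|f_i|/\lambda_i)\,dx\le1$ when $\lambda_i<1$, using only the joint information $\prod_j\lambda_j>1$ --- is exactly the point your attempt identifies but does not resolve.
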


%%%%%%%%%%%%%%%%%%%%%%%%%%%%%%%%%%%%%%%%%
\vspace{0.3cm}
\noindent\textbf{Proof of Theorem 2.1.}
Denote $E=\{x \in \Rn; \mathcal{M}_{\mathcal{R},\alpha} f(x) > \lambda^m \}$. Then there exists a compact set $K$ such that $K \subset E$ and
$$
|K| \leq |E| \leq 2|K|.
$$
By the compactness of $K$, one can find a finite collection of rectangles $\{R_j\}_{j=1}^N$ such that
\begin{equation}\label{covering}
K \subset \bigcup_{j=1}^N R_j \ \  \text{and} \ \ \lambda^m < \prod_{i=1}^m \frac{1}{|R_j|^{1-\frac{\alpha}{mn}}} \int_{R_j} |f_i(y)| dy, \ \ j=1,\ldots,N.
\end{equation}
According to the C\'{o}rdoba-Fefferman's rectangle covering lemma \cite{CF}, there are positive constants $\delta$, $c$ depending only on $n$, and a subfamily $\{\widetilde{R}_j\}_{j=1}^\ell$ of $\{R_j\}_{j=1}^N$ satisfying
\begin{equation}\label{subfamily}
\Big| \bigcup_{j=1}^N R_j \Big| \leq c \Big| \bigcup_{j=1}^\ell \widetilde{R}_j \Big|
\end{equation}
and
\begin{equation}\label{exp}
\int_{\bigcup_{j=1}^\ell \widetilde{R}_j} \exp \Big(\delta \sum_{j=1}^\ell \mathbf{1}_{\widetilde{R}_j}(x)\Big)^{\frac{1}{n-1}} dx
\leq 2 \Big| \bigcup_{j=1}^\ell \widetilde{R}_j \Big|.
\end{equation}
For convenience, we introduce the notations: $\widetilde{E} = \bigcup_{j=1}^\ell \widetilde{R}_j$ and $\Psi_n(t)=\exp(t^{\frac{1}{n-1}})-1$.
Then the inequality $(\ref{exp})$ is the same as
$$
\frac{1}{|\widetilde{E}|} \int_{\widetilde{E}} \Psi_n \Big(\delta \sum_{j=1}^\ell \mathbf{1}_{\widetilde{R}_j}(x)\Big) dx \leq 1.
$$
Furthermore, using the fact
\begin{equation}\label{equivalent}
\big\| f \big\|_{\Phi,E} \leq 1
\Leftrightarrow \frac{1}{|E|} \int_E \Phi(|f(x)|) dx \leq 1, \ \ \text{for any set} \ |E| < \infty,
\end{equation}
one can obtain
\begin{equation}\label{Psi}
\Big\| \sum_{j=1}^\ell \mathbf{1}_{\widetilde{R}_j} \Big\|_{\Psi_n,\widetilde{E}} \leq \delta^{-1}.
\end{equation}

Therefore, in all, combining the inequalities $(\ref{covering})$ and $(\ref{subfamily})$, we have
\begin{align*}
|\widetilde{E}|^{1-\frac{\alpha}{mn}}
&=\Big| \bigcup_{j=1}^\ell \widetilde{R}_j \Big|^{1-\frac{\alpha}{mn}} \\
&\leq \sum_{j=1}^{\ell} |\widetilde{R}_j|^{1-\frac{\alpha}{mn}} \bigg(\frac{1}{\lambda^m}\prod_{i=1}^m \frac{1}{|\widetilde{R}_j|^{1-\frac{\alpha}{mn}}} \int_{\widetilde{R}_j}|f_i(y)|dy \bigg)^{1/m} \\
&= \sum_{j=1}^{\ell} \bigg(\prod_{i=1}^m \int_{\widetilde{R}_j}\frac{|f_i(y)|}{\lambda}dy \bigg)^{1/m} \\
&\leq \bigg(\prod_{i=1}^m \sum_{j=1}^{\ell}\int_{\widetilde{R}_j} \frac{|f_i(y)|}{\lambda}dy \bigg)^{1/m} \\
&=\bigg(\prod_{i=1}^m \int_{\widetilde{E}} \sum_{j=1}^{\ell} \mathbf{1}_{\widetilde{R}_j}(y) \frac{|f_i(y)|}{\lambda}dy \bigg)^{1/m}.
\end{align*}
Hence, from the H\"{o}lder inequality, together with $(\ref{Holder})$ and $(\ref{Psi})$, it now follows that
\begin{align*}
1 & \leq \prod_{i=1}^m \frac{1}{|\widetilde{E}|} \int_{\widetilde{E}} \sum_{j=1}^\ell \mathbf{1}_{\widetilde{R}_j}(y) \cdot |\widetilde{E}|^{\frac{\alpha}{mn}} \frac{|f_i(y)|}{\lambda} dy \\
&\leq \prod_{i=1}^m \Big\| \sum_{j=1}^\ell \mathbf{1}_{\widetilde{R}_j} \Big\|_{\Psi_n,\widetilde{E}}  \Big\| |\widetilde{E}|^{\frac{\alpha}{mn}} \frac{f_i}{\lambda} \Big\|_{\Phi_n,\widetilde{E}} \\
&\leq \prod_{i=1}^m \delta^{-1} \Big\| |\widetilde{E}|^{\frac{\alpha}{mn}} \frac{f_i}{\lambda} \Big\|_{\Phi_n,\widetilde{E}} \\
&= \prod_{i=1}^m \Big\| \delta^{-1} |\widetilde{E}|^{\frac{\alpha}{mn}} \frac{f_i}{\lambda} \Big\|_{\Phi_n,\widetilde{E}}. \\
\end{align*}

Applying Lemma $\ref{Phi-Phi-m}$, we deduce that
$$
1 \leq \prod_{i=1}^{m}\frac{1}{|\widetilde{E}|}\int_{\widetilde{E}}\Phi_n^{(m)}
\Big(\delta^{-1}|\widetilde{E}|^{\frac{\alpha}{mn}} \frac{|f_i(y)|}{\lambda} \Big)dy.
$$
Notice that the function $\Phi_n^{(m)}$ is sub-multiplicative, we get
\begin{align}\label{cc}
1 &\lesssim \prod_{i=1}^m \frac{1}{|\widetilde{E}|} \int_{\widetilde{E}} \Phi_n^{(m)} \big(|\widetilde{E}|^{\frac{\alpha}{mn}}\big) \Phi_n^{(m)} \Big(\frac{|f_i(y)|}{\lambda} \Big) dy \\
&\nonumber\lesssim \prod_{i=1}^m \frac{1}{|\widetilde{E}|^{1-\frac{\alpha}{mn}}} \Big[1 + \big(\log^+ |\widetilde{E}|^{\frac{\alpha}{mn}} \big)^{n-1} \Big]^m \int_{\widetilde{E}} \Phi_n^{(m)} \Big(\frac{|f_i(y)|}{\lambda} \Big) dy,
\end{align}
where we have used the fact that $\Phi_{n}^{(m)}(t) \lesssim t [1+(\log^+ t)^{n-1}]^m$.
Moreover, (\ref{cc}) implies that
\begin{equation}\label{E-E}
|\widetilde{E}|^{m-\frac{\alpha}{n}}
\lesssim \prod_{i=1}^m \Big[1 + \big(\log^+ |\widetilde{E}|^{\frac{\alpha}{mn}} \big)^{n-1} \Big]^m \int_{\Rn} \Phi_n^{(m)} \Big(\frac{|f_i(y)|}{\lambda} \Big) dy.
\end{equation}
In order to get further estimate, we need a basic fact as follows: \\ if $\theta \in (0,1)$, then there exists a constant $C_0>1$ and $\beta$ small enough such that
\begin{equation}\label{fact}
0 < \beta < \frac{1-\theta}{mn}, \ \ 1 + \log^+ t^{\theta} \leq t^\beta, \ \ \text{if} \ t > C_0.
\end{equation}
If $|\widetilde{E}| > C_0$, then by the inequalities $(\ref{E-E})$ and $(\ref{fact})$ we have
$$
|\widetilde{E}|^{m-\frac{\alpha}{n}}
\lesssim |\widetilde{E}|^{m^2(n-1)\beta} \prod_{i=1}^m \int_{\Rn} \Phi_n^{(m)} \Big(\frac{|f_i(y)|}{\lambda} \Big) dy.
$$
And hence
$$
|\widetilde{E}|^{m-\frac{\alpha}{n}-m^2(n-1)\beta}
\lesssim \prod_{i=1}^m \int_{\widetilde{E}} \Phi_n^{(m)} \Big(\frac{|f(x)|}{\lambda} \Big) dx.
$$
Therefore,
$$
\log^+ |\widetilde{E}|^{\frac{\alpha}{mn}}
\lesssim \frac{\alpha}{mn} \log^+ \prod_{i=1}^m \int_{\Rn} \Phi_n^{(m)} \Big(\frac{|f_i(y)|}{\lambda} \Big) dy.
$$
From this inequality and $(\ref{E-E})$, we obtain
\begin{equation}\label{E-1}
|\widetilde{E}|^{m-\frac{\alpha}{n}}
\lesssim \prod_{i=1}^m \bigg[ 1 + \left(\frac{\alpha}{mn} \log^+ \prod_{j=1}^m \int_{\Rn} \Phi_n^{(m)} \Big(\frac{|f_j(y)|}{\lambda}\Big) dy \right)^{n-1}\bigg]^m \int_{\Rn} \Phi_n^{(m)}\left(\frac{|f_i(y)|}{\lambda}\right) dy.
\end{equation}
On the other hand, if $|\widetilde{E}| \leq C_0$, then
$$
1 + \big( \log^+ |\widetilde{E}|^{\frac{\alpha}{mn}} \big)^{n-1} \lesssim 1.
$$
Hence,
\begin{equation}\label{E-2}
|\widetilde{E}|^{m-\frac{\alpha}{n}}
\lesssim \prod_{i=1}^m \int_{\Rn} \Phi_n^{(m)} \left(\frac{|f_i(y)|}{\lambda}\right) dy.
\end{equation}
Consequently, combining $(\ref{E-1})$, $(\ref{E-2})$ with $|E| \lesssim |\widetilde{E}|$, we deduce the desired result.

\qed
%%%%%%%%%%%%%%%%%%%%%%%%%%%%%%%%%%%%%%%%%%%%%%%%

Next, we will demonstrate Theorem $\ref{Theorem one-weight}$. The proof will be based on Theorem $\ref{Theorem M-B}$, which will be proved in Section $\ref{Sec-Last}$. First we recall the definition of the generalized H\"{o}lder's inequality on Orlicz spaces due to O'Neil \cite{O'Neil}.
\begin{lemma}\cite{O'Neil}\label{generalized Holder}
If $\mathcal{A}$, $\mathcal{B}$ and $\mathcal{C}$ are Young functions satisfying
$$
\mathcal{A}^{-1}(t) \mathcal{C}^{-1}(t) \leq \mathcal{B}^{-1}(t), \ \text{for any} \ t>0,
$$
then for all functions $f, g$ and any measurable set $E \subset \Rn$, the following inequality holds
\begin{equation}\label{G-H-I}
\big\| f g \big\|_{\mathcal{B},E} \leq 2 \big\| f \big\|_{\mathcal{A},E} \big\| g \big\|_{\mathcal{C},E} .
\end{equation}
\end{lemma}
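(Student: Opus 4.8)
The plan is to deduce the inequality from a single pointwise bound relating $\mathcal{A}$, $\mathcal{B}$ and $\mathcal{C}$, and then integrate against a suitable rescaling of $f$ and $g$. First I would dispose of the degenerate cases: if $\|f\|_{\mathcal{A},E}=0$ then $f=0$ a.e.\ on $E$, so $fg=0$ a.e.\ and both sides of $(\ref{G-H-I})$ vanish (symmetrically if $\|g\|_{\mathcal{C},E}=0$); and if either norm is infinite the right-hand side is infinite, so there is nothing to prove. Hence one may assume $0<\|f\|_{\mathcal{A},E},\ \|g\|_{\mathcal{C},E}<\infty$.

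The key step is the pointwise inequality
$$\mathcal{B}(ab)\ \le\ \max\{\mathcal{A}(a),\mathcal{C}(b)\}\ \le\ \mathcal{A}(a)+\mathcal{C}(b),\qquad a,b\ge 0.$$
To prove it I would work with the generalized inverse $\Phi^{-1}(t):=\sup\{r\ge 0:\Phi(r)\le t\}$, which, for a continuous increasing Young function $\Phi$ with $\Phi(0)=0$ and $\Phi(t)\to\infty$, is non-decreasing and satisfies $r\le\Phi^{-1}(\Phi(r))$ and $\Phi(\Phi^{-1}(t))\le t$ for all $r,t\ge 0$. Setting $s:=\mathcal{A}(a)$ and $u:=\mathcal{C}(b)$, we get $a\le\mathcal{A}^{-1}(s)$ and $b\le\mathcal{C}^{-1}(u)$; assuming without loss of generality $u\le s$, monotonicity of $\mathcal{C}^{-1}$ together with the hypothesis $\mathcal{A}^{-1}(s)\mathcal{C}^{-1}(s)\le\mathcal{B}^{-1}(s)$ gives
$$ab\ \le\ \mathcal{A}^{-1}(s)\,\mathcal{C}^{-1}(u)\ \le\ \mathcal{A}^{-1}(s)\,\mathcal{C}^{-1}(s)\ \le\ \mathcal{B}^{-1}(s),$$
whence $\mathcal{B}(ab)\le\mathcal{B}(\mathcal{B}^{-1}(s))\le s=\max\{s,u\}$. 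Convexity of $\mathcal{B}$ together with $\mathcal{B}(0)=0$ then upgrades this to $\mathcal{B}(ab/2)\le\tfrac12\big(\mathcal{A}(a)+\mathcal{C}(b)\big)$.

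To finish, fix $\lambda>\|f\|_{\mathcal{A},E}$ and $\mu>\|g\|_{\mathcal{C},E}$, so that $\tfrac1{|E|}\int_E\mathcal{A}(|f|/\lambda)\,dx\le 1$ and $\tfrac1{|E|}\int_E\mathcal{C}(|g|/\mu)\,dx\le 1$ by the definition of the Orlicz norm. Applying the pointwise bound with $a=|f(x)|/\lambda$ and $b=|g(x)|/\mu$ and integrating over $E$ yields $\tfrac1{|E|}\int_E\mathcal{B}\big(|f(x)g(x)|/(2\lambda\mu)\big)\,dx\le 1$, i.e.\ $\|fg\|_{\mathcal{B},E}\le 2\lambda\mu$; letting $\lambda\downarrow\|f\|_{\mathcal{A},E}$ and $\mu\downarrow\|g\|_{\mathcal{C},E}$ gives $(\ref{G-H-I})$. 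The only genuine subtlety — and the step I would treat most carefully — is the behaviour of the generalized inverses on the flat portions of the Young functions, since ``increasing'' here is only non-strict; this is precisely what forces the choice $\Phi^{-1}(t)=\sup\{r:\Phi(r)\le t\}$ and the one-sided identities $r\le\Phi^{-1}(\Phi(r))$, $\Phi(\Phi^{-1}(t))\le t$ used above, and it is the reason one cannot simply invert the $\Phi$'s naively.
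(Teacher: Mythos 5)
Your argument is correct, and it is worth noting that the paper itself does not prove this lemma at all: it is cited directly from O'Neil, so there is no internal proof to compare against. Your route is the standard one — derive the pointwise bound $\mathcal{B}(ab)\le\max\{\mathcal{A}(a),\mathcal{C}(b)\}$ from the hypothesis on generalized inverses, upgrade it by convexity to $\mathcal{B}(ab/2)\le\tfrac12(\mathcal{A}(a)+\mathcal{C}(b))$, and integrate over $E$ against the normalizations $\lambda>\|f\|_{\mathcal{A},E}$, $\mu>\|g\|_{\mathcal{C},E}$. The one place that needs a small patch is the case $s=\mathcal{A}(a)=0$ (and then necessarily $u=\mathcal{C}(b)=0$ in your WLOG branch): the hypothesis $\mathcal{A}^{-1}(t)\mathcal{C}^{-1}(t)\le\mathcal{B}^{-1}(t)$ is only assumed for $t>0$, so you cannot invoke it directly at $t=s=0$. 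This is easily repaired: for every $t>0$ one has $a\le\mathcal{A}^{-1}(0)\le\mathcal{A}^{-1}(t)$ and $b\le\mathcal{C}^{-1}(0)\le\mathcal{C}^{-1}(t)$ by monotonicity of the generalized inverses, hence $ab\le\mathcal{B}^{-1}(t)$ and $\mathcal{B}(ab)\le\mathcal{B}(\mathcal{B}^{-1}(t))\le t$; letting $t\downarrow 0$ gives $\mathcal{B}(ab)=0=\max\{s,u\}$. (Equivalently, continuity of the Young functions forces right-continuity of the generalized inverses at $0$, so the hypothesis extends to $t=0$.) With that observation your pointwise inequality holds for all $a,b\ge 0$ and the rest of the proof is airtight, including your careful treatment of the degenerate norm cases and the use of the one-sided identities $r\le\Phi^{-1}(\Phi(r))$ and $\Phi(\Phi^{-1}(t))\le t$, which are indeed exactly what the flat portions of a non-strictly-increasing Young function require.
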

%%%%%%%%%%%%%%%%%%%%%%%%%%%%%%%%%%%%

\vspace{0.3cm}
\noindent\textbf{Proof of Theorem 2.3.}
The process of our proof is
$(\ref{one-weight-R-r}) \Leftrightarrow (\ref{one-weight-R})
\Rightarrow (\ref{M-R-a-Phi}) \Rightarrow (\ref{M-R-a})
\Rightarrow (\ref{one-weight-R})$. In fact,
$(\ref{one-weight-R-r}) \Leftrightarrow (\ref{one-weight-R})$ is contained in
Theorem $2.2$ \cite{CXY}. From
Lemma $\ref{generalized Holder}$, it follows that
$\mathcal{M}_{\mathcal{R},\alpha}(\vec{f})
\leq \mathcal{M}_{\mathcal{R},\alpha,\Phi_{k+1}}(\vec{f})$.
This shows $(\ref{M-R-a-Phi}) \Rightarrow (\ref{M-R-a})$. Moreover, taking
$f_i=w_i^{-p_i'} \chi_R$ for a given rectangle $R$, we may obtain
$(\ref{M-R-a}) \Rightarrow (\ref{one-weight-R})$. Hence,
it remains to prove $(\ref{one-weight-R}) \Rightarrow (\ref{M-R-a-Phi})$.

By Theorem $\ref{weight-2}$ and Theorem $6.7$ \cite[p.~458]{C-Rubio}, it is easy to see that
$\nu_{\vec{w}}^q$ satisfies the condition $(A)$ and $w_i^{-p_i'}$ satisfies
the reverse H\"{o}lder inequality. Thus, there exist constants
$c_i > 0, \ r_i > 1 \ (i=1,\cdots,m)$ such that
\begin{equation}\label{Reverse Holder}
\bigg( \frac{1}{|R|} \int_R w_i^{-p_i' r_i} dx \bigg)^{\frac{1}{r_i}} \leq \frac{c_i}{|R|} \int_R w_i^{-p_i'} dx, \text{\ for \ any\  rectangle}\ R.
\end{equation}
For fixed $k \in \N$, we introduce the notation
$$
\mathcal{A}_i(t) = t^{r_i p_i'}, \ \mathcal{C}_i(t)
=[t(1+\log ^{+}t)^k]^{(r_ip_i')'}.
$$
Then, one may obtain that
$$
\mathcal{A}_i^{-1}(t)
= t^{\frac{1}{r_i p_i'}} \
\text{and} \ \ \mathcal{A}_i^{-1}(t) \mathcal{C}_i^{-1}(t)
 \thickapprox \Phi_{k+1}^{-1}(t).
$$
Notice that $\mathcal{C}_i\in B_{p_i}^*$ and $\mathcal{C}_i$ is
submultiplicative. From the Proposition $2.2$ \cite{L-Luque}, it now follows that
$$
 M_{\mathcal{R},\mathcal{C}_i} : L^{p_i}(\Rn) \rightarrow L^{p_i}(\Rn),
\ \  \ i=1,\ldots,m.
$$
This yields immediately that
$$
\mathcal{M}_{\mathcal{R},\overrightarrow{\mathcal{C}}} : L^{p_1}(\Rn) \times
\cdots \times L^{p_m}(\Rn) \rightarrow L^p(\Rn).
$$
In addition, for a given rectangle $R$, $(\ref{Reverse Holder})$ yields that
\begin{align*}
&|R|^{\frac{\alpha}{n} + \frac{1}{q} -\frac{1}{p}}
\left( \frac{1}{|R|} \int_R \nu_{\vec{w}}^q dx \right)^{\frac{1}{q}}
\prod_{i=1}^m \big\| w_i^{-1} \big\|_{\mathcal{A}_i,{R}} \\
&= \left( \frac{1}{|R|} \int_R \nu_{\vec{w}}^q dx \right)^{\frac{1}{q}}
\prod_{i=1}^m \left( \frac{1}{|R|} \int_R w_i^{-r_i p_i'} dx
\right)^{\frac{1}{r_i p_i'}} \\
&\lesssim \left( \frac{1}{|R|} \int_R \nu_{\vec{w}}^q dx \right)^{\frac{1}{q}}
\prod_{i=1}^m \left( \frac{1}{|R|} \int_R w_i^{-p_i'} dx
\right)^{\frac{1}{p_i'}} \\
&\leq [\vec{w}]_{A_{(\vec{p},q),\mathcal{R}}} < \infty.
\end{align*}
This implies that $(\vec{w},\nu_{\vec{w}})$ satisfies the two weighted condition $(\ref{two-weight-Young condition})$.
By Theorem $\ref{Theorem M-B}$, we get
$$
\mathcal{M}_{\mathcal{R},\alpha,\Phi_{k+1}} : L^{p_1}(w_1^{p_1}) \times
\cdots \times L^{p_m}(w_m^{p_m}) \rightarrow L^q(\nu_{\vec{w}}^q).
$$

Therefore, in all, we have completed the proof of Theorem $\ref{Theorem one-weight}$.
\qed
%%%%%%%%%%%%%%%%%%%%%%%%%%%%%%%%%%%%%%%%%%%%%%%%%

%%%%%%%%%%%%%%%%%%%%%%%%%%%%%%%%%%%%%%%%%%%%%%%%%%%%%%%%%%%%%%%%%%%%%%%%%%%%%%%%%%%
\section{Proofs of the rest Theorems and Corollaries}\label{Sec-Last}

To prove Theorem $\ref{Theorem M-B}$, we first introduce the definition of the general basis and a key covering lemma.
\begin{definition}[\cite{J}, \cite{JT}]
Let $\mathscr{B}$ be a basis and let $0 < \alpha < 1$. A finite sequence $\{ A_i \}_{i=1}^N \subset \mathscr{B}$ of
sets of finite $dx$-measure is called $\alpha$-scattered with respect to the Lebesgue measure if
$$
\Big| A_i \bigcap \bigcup_{s<i} A_s \Big| \leq \alpha |A_i|, \ \ \text{for all} \ \ 1 < i \leq N .
$$
\end{definition}

\begin{lemma}[\cite{GLPT}, \cite{J}]\label{Lemma condition (A)}
Let $\mathscr{B}$ be a basis and let $w$ be a weight associated to this basis. Suppose further that $M_{\mathscr{B}}$ satisfies condition $(A_{\mathscr{B},\gamma,w})$ for some $0 < \gamma < 1$. Then, given any finite sequence $\{A_i\}_{i=1}^N$ of sets $A_i \in \mathscr{B}$, one can find a subsequence $\{ \tilde{A}_i \}_{i \in I}$ such that
\begin{enumerate}
\item [(a)] $\{ \tilde{A}_i \}_{i \in I}$ is $\gamma$-scattered with respect to the Lebesgue measure;
\item [(b)] $\tilde{A}_i = A_i, i \in I$;
\item [(c)] for any $1 \leq i < j \leq N + 1$,
$$
w \Big( \bigcup_{s < j} A_s \Big) \lesssim  w \Big( \bigcup_{s < i} A_s \Big) + w \Big( \bigcup_{i \leq s < j} \tilde{A}_s \Big),
$$
where $\tilde{A}_s = \emptyset$ when $s \not\in I$.
\end{enumerate}
\end{lemma}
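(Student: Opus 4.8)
The plan is to produce $I$ and the sets $\tilde A_s$ by a single greedy sweep through $\{A_i\}_{i=1}^N$ from $i=1$ to $i=N$, and then to deduce (c) by absorbing an entire union of ``bad'' sets into one level set of $M_{\mathscr B}$, so that the Tauberian inequality $(A_{\mathscr B,\gamma,w})$ is invoked only once for each pair $(i,j)$. Concretely, put $1\in I$ and $\tilde A_1=A_1$; for $i\ge 2$, having already decided $I\cap\{1,\dots,i-1\}$, declare $i\in I$ with $\tilde A_i=A_i$ precisely when
$$
\Big|A_i\cap\bigcup_{\substack{s<i\\ s\in I}}\tilde A_s\Big|\le\gamma\,|A_i|,
$$
and otherwise set $i\notin I$ and $\tilde A_i=\emptyset$ (we may assume $|A_i|>0$ for all $i$). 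Property (b) is built in, and (a) is immediate: writing $I=\{i_1<i_2<\cdots\}$, the selection rule at $i_k$ reads precisely $|\tilde A_{i_k}\cap\bigcup_{l<k}\tilde A_{i_l}|\le\gamma|\tilde A_{i_k}|$, which is the $\gamma$-scattered condition for the reindexed family.

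For (c), fix $1\le i<j\le N+1$ and set $\Omega_k=\bigcup_{s\le k}A_s$, $G=\bigcup_{i\le s<j,\,s\in I}\tilde A_s$, and $F=\Omega_{i-1}\cup G$. The key point is the inclusion
$$
\bigcup_{s<j}A_s\ \subseteq\ \Omega_{i-1}\ \cup\ \big\{x\in\Rn:\ M_{\mathscr B}(\mathbf 1_F)(x)>\gamma\big\}.
$$
Indeed, if $x$ belongs to the left-hand side then $x\in A_s$ for some $s<j$: if $s<i$ then $x\in\Omega_{i-1}$; if $i\le s<j$ and $s\in I$ then $x\in\tilde A_s\subseteq F$ with $\tilde A_s\in\mathscr B$, so $M_{\mathscr B}(\mathbf 1_F)(x)\ge|\tilde A_s\cap F|/|\tilde A_s|=1>\gamma$; and if $i\le s<j$ with $s\notin I$ then the discard rule forces $|A_s\cap\bigcup_{t<s,\,t\in I}\tilde A_t|>\gamma|A_s|$, while $\bigcup_{t<s,\,t\in I}\tilde A_t=\bigcup_{t<i,\,t\in I}A_t\cup\bigcup_{i\le t<s,\,t\in I}\tilde A_t\subseteq\Omega_{i-1}\cup G=F$ and $x\in A_s\in\mathscr B$, so again $M_{\mathscr B}(\mathbf 1_F)(x)\ge|A_s\cap F|/|A_s|>\gamma$. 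Applying $(A_{\mathscr B,\gamma,w})$ to $F$ now yields
$$
w\Big(\bigcup_{s<j}A_s\Big)\le w(\Omega_{i-1})+C_{\mathscr B,\gamma,w}\,w(F)\le(1+C_{\mathscr B,\gamma,w})\,w\Big(\bigcup_{s<i}A_s\Big)+C_{\mathscr B,\gamma,w}\,w\Big(\bigcup_{\substack{i\le s<j\\ s\in I}}\tilde A_s\Big),
$$
which is exactly (c).

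The one genuine subtlety — and the reason a naive term-by-term estimate fails — is that one must not bound $w(A_s)$ separately for each discarded index $s$ and sum: that would lose a factor of order $N$. Instead, the whole union $\bigcup_{s<j}A_s$ is dominated at once by the single level set $\{M_{\mathscr B}(\mathbf 1_F)>\gamma\}$, so the Tauberian hypothesis is used only once and the constant in (c) stays independent of $N$. Beyond this observation the argument is routine bookkeeping with finite unions of sets of finite measure.
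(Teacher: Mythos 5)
Your proof is correct, and it reproduces the standard selection argument. The paper itself states this lemma without proof, citing Grafakos--Liu--P\'erez--Torres \cite{GLPT} and Jawerth \cite{J}; your forward greedy sweep (accept $A_i$ iff $|A_i\cap\bigcup_{s<i,\,s\in I}\tilde A_s|\le\gamma|A_i|$) is precisely the Jawerth-type selection used there, and the proof of (c) hinges on the same single-shot application of the Tauberian condition: with $F=\Omega_{i-1}\cup G$, both accepted and rejected indices $s\in[i,j)$ yield, for any $x\in A_s$, a set of $\mathscr B$ containing $x$ on which $\mathbf 1_F$ has average exceeding $\gamma$, so $\bigcup_{s<j}A_s\subseteq\Omega_{i-1}\cup\{M_{\mathscr B}(\mathbf 1_F)>\gamma\}$ and the constant depends only on $C_{\mathscr B,\gamma,w}$, not on $N$ or the family. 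Your remark that one must absorb all rejected sets into a single level set rather than summing $w(A_s)$ over rejected $s$ is exactly the right reason the constant stays uniform; no gaps.
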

%%%%%%%%%%%%%%%%%%%%%%%%%%%%%%%%%%%%%%%%%

\vspace{0.3cm}
\noindent\textbf{Proof of Theorem 2.6.}
The idea of the following arguments is essentially a combination of the ideas from \cite{GLPT}, \cite{J}, \cite{L-Luque}.
Let $N > 0$ be a large integer. We will prove the required estimate for the quantity
$$ \int_{\{2^{-N} < \mathcal{M}_{\mathscr{B},\varphi,\overrightarrow{\Psi}}(\vec{f}) \leq 2^{N+1}\}} \mathcal{M}_{\mathscr{B},\varphi,\overrightarrow{\Psi}}(\vec{f})(x)^q v^q dx ,$$
with a bound independent of $N$.
We begin with the following claim.
\begin{claim}
For each integer $k$ with $|k| \leq N$, there exists a compact set $K_k$ and a finite sequence $b_k = \{ B_r^k \}_{r \geq 1}$ of sets $B_r^k \in \mathscr{B}$ such that
$$ v^q(K_k) \leq v^q(\{\mathcal{M}_{\mathscr{B},\varphi,\overrightarrow{\Psi}}(\vec{f}) > 2^k \}) \leq 2 v^q(K_k) .$$
Moreover, $\{ \cup_{B \in b_k} B\}_{k={-N}}^N$ is decreasing and there holds that
$$
\bigcup_{B \in b_k} B \subset K_k \subset \{\mathcal{M}_{\mathscr{B},\varphi,\overrightarrow{\Psi}}(\vec{f}) > 2^k \}
$$
and
\begin{equation}\label{B-r-k}
\varphi(|B_r^k|) \prod_{j=1}^m \big\| f_j \big\|_{\Psi_j,B_r^k} > 2^k.
\end{equation}
\end{claim}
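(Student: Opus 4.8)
The plan is to build the pairs $(K_k,b_k)$ by a downward induction on $k$, from $k=N$ down to $k=-N$, arranging that each family $b_k$ contains the previously constructed $b_{k+1}$. The observation that makes this possible — and that simultaneously yields the monotonicity of $\{\bigcup_{B\in b_k}B\}_{k=-N}^N$ — is that any $B\in\mathscr B$ with $\varphi(|B|)\prod_{j=1}^m\|f_j\|_{\Psi_j,B}>2^{k+1}$ a fortiori satisfies $(\ref{B-r-k})$ at level $k$, so inheriting the old sets costs nothing and then $\bigcup_{B\in b_k}B\supset\bigcup_{B\in b_{k+1}}B$. First I would record two preliminaries: after a routine density/truncation reduction one may take each $f_j$ bounded with compact support, and then, since $\lim_{t\to\infty}\varphi(t)/t=0$, the function $\mathcal M_{\mathscr B,\varphi,\overrightarrow{\Psi}}(\vec f)$ decays at infinity, so every set $\Omega_k:=\{x:\mathcal M_{\mathscr B,\varphi,\overrightarrow{\Psi}}(\vec f)(x)>2^k\}$ is bounded with $v^q(\Omega_k)<\infty$ and $\Omega_N\subset\cdots\subset\Omega_{-N}$; moreover each $\Omega_k$ is open, because $\mathcal M_{\mathscr B,\varphi,\overrightarrow{\Psi}}(\vec f)(x)>2^k$ forces some $B\in\mathscr B$ with $x\in B$ and $\varphi(|B|)\prod_j\|f_j\|_{\Psi_j,B}>2^k$, whence $B\subset\Omega_k$.

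For the inductive step (with $b_{N+1}:=\emptyset$) I assume $b_{k+1}$ is built with $\bigcup_{B\in b_{k+1}}B\subset K_{k+1}\subset\Omega_{k+1}\subset\Omega_k$. Using inner regularity of the Borel measure $v^q\,dx$ I pick a compact $F_k\subset\Omega_k$ with $v^q(F_k)\ge\frac12 v^q(\Omega_k)$; for each $x\in F_k$ I choose $B^x\in\mathscr B$ with $x\in B^x$, $\varphi(|B^x|)\prod_j\|f_j\|_{\Psi_j,B^x}>2^k$ and $\overline{B^x}\subset\Omega_k$ (the last requirement is discussed below); by compactness finitely many of them, say $B_1,\dots,B_M$, already cover $F_k$. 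I then set $b_k:=b_{k+1}\cup\{B_1,\dots,B_M\}$ and $K_k:=\overline{\bigcup_{B\in b_k}B}$. Each $B\in b_k$ satisfies $(\ref{B-r-k})$ at level $k$; $b_k$ is finite; $K_k$ is compact, being a finite union of compact closures; $\bigcup_{B\in b_k}B\subset K_k\subset\Omega_k$ and $\bigcup_{B\in b_{k+1}}B\subset\bigcup_{B\in b_k}B$. Finally $K_k\subset\Omega_k$ gives $v^q(K_k)\le v^q(\Omega_k)$, and $F_k\subset K_k$ gives $v^q(\Omega_k)\le 2v^q(F_k)\le 2v^q(K_k)$, which closes the induction and produces $(K_k,b_k)$ for all $|k|\le N$ with all the asserted properties (the decreasing property following from $b_k\supset b_{k+1}$).

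The point I expect to require the most care is the possibility of choosing the selected sets with $\overline{B^x}\subset\Omega_k$, which is needed because $\bigcup_{B\in b_k}B$ is open and a compact set containing it must contain its closure. I would handle it by a perturbation: given $B_0\in\mathscr B$ with $x\in B_0$ and $\varphi(|B_0|)\prod_j\|f_j\|_{\Psi_j,B_0}>2^k$, shrink $B_0$ slightly along sets still containing the interior point $x$; since the inequality is strict and $B\mapsto\varphi(|B|)\prod_j\|f_j\|_{\Psi_j,B}$ varies continuously along such an exhaustion (continuity of $\varphi$ and of Orlicz averages, using boundedness of the $f_j$), a small enough shrinking retains the inequality and acquires closure inside the open set $\Omega_k$. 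For $\mathscr B=\mathcal R$ and $\varphi(t)=t^{\alpha/n}$ this is entirely elementary, and it is the only place where the structure of $\mathscr B$ is used; the remaining items of the Claim — finiteness of $b_k$, the chain $\bigcup_{B\in b_k}B\subset K_k\subset\{\mathcal M_{\mathscr B,\varphi,\overrightarrow{\Psi}}(\vec f)>2^k\}$, the sandwich for $v^q$, and monotonicity in $k$ — are then immediate from the construction.
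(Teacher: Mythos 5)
Your proposal is correct and reaches the same conclusion by essentially the same devices the paper uses — inner regularity of the measure $v^q\,dx$ to produce a compact set of substantial $v^q$-measure inside the superlevel set $\Omega_k$, followed by a compactness argument to extract a finite subcover from $\mathscr B$ whose members satisfy $(\ref{B-r-k})$ — but you handle the monotonicity requirement in a genuinely different and cleaner way. The paper forces $\{\bigcup_{B\in b_k}B\}_k$ to decrease by inserting the previously built compact $K_{k+1}$ into the compact seed $\widetilde K_k$ at the next level down; you instead observe that any $B$ with $\varphi(|B|)\prod_j\|f_j\|_{\Psi_j,B}>2^{k+1}$ \emph{a fortiori} satisfies $(\ref{B-r-k})$ at level $k$, so you may simply carry $b_{k+1}\subset b_k$ and nesting is built in. Where you are actually more careful than the paper is the compactness of $K_k$: you correctly point out that a compact $K_k$ sandwiched between the open set $\bigcup_{B\in b_k}B$ and $\Omega_k$ forces $\overline{\bigcup_{B\in b_k}B}\subset\Omega_k$, a point the paper passes over in silence. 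Your fix — shrinking each selected $B^x$ slightly so that its closure lands inside the open set $\Omega_k$ while preserving the strict inequality — is sound for $\mathscr B=\mathcal R$ and $\varphi(t)=t^{\alpha/n}$, but, as you yourself flag, it leans on a structural property of the basis (closure under small perturbations) that a \emph{general} basis of open sets need not possess, and the Claim is stated in exactly that generality. The simpler repair, available in full generality, is to notice that the compactness of $K_k$ is never actually used afterwards in the proof of the theorem: only $\Omega_k=\bigcup_{B\in b_k}B$, the condition $(\ref{B-r-k})$, the nesting, and the lower bound $v^q(\Omega_k)\gtrsim v^q(\{\mathcal M_{\mathscr B,\varphi,\overrightarrow{\Psi}}(\vec f)>2^k\})$ (which already follows from $F_k\subset\Omega_k$) are invoked. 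So one may either drop the compactness requirement on $K_k$ or take $K_k=\bigcup_{B\in b_k}B$, dispensing with the perturbation step entirely and making your induction close for an arbitrary basis.
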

\begin{proof}
For each $k$ we choose a compact set $\widetilde{K}_k \subset \{\mathcal{M}_{\mathscr{B},\varphi,\overrightarrow{\Psi}}(\vec{f}) > 2^k \}$ such that $$ v^q(\widetilde{K}_k) \leq v^q(\{\mathcal{M}_{\mathscr{B},\varphi,\overrightarrow{\Psi}}(\vec{f}) > 2^k \}) 2 \leq v^q(\widetilde{K}_k) .$$ For this $\widetilde{K}_k$, there exists a finite sequence $b_k = \{B^k_r \}_{r \geq 1}$ of sets $B^k_r \in \mathscr{B}$ such
that every $B^k_r$ satisfies $(\ref{B-r-k})$ and such that
$\widetilde{K}_k \subset \cup_{B \in b_k}B \subset \{\mathcal{M}_{\mathscr{B},\varphi,\overrightarrow{\Psi}}(\vec{f}) > 2^k \}$.
Now, we take a compact set $K_k$ such that
$\cup_{B \in b_k}B \subset K_k \subset \{\mathcal{M}_{\mathscr{B},\varphi,\overrightarrow{\Psi}}(\vec{f}) > 2^k \}$. Finally, to ensure that $\{\cup_{B \in b_k} B\}_{k=-N}^N$ is decreasing, we begin the above selection from $k = N$ and once a selection is done for $k$ we do the selection for $k-1$ with the next additional requirement
$\widetilde{K}_{k-1} \supset K_k$. This finishes the proof of the claim.
\end{proof}
We continue with the proof of Theorem 2.4. Since $\{\cup_{B \in b_k} B\}_{k=-N}^N$ is a sequence of decreasing sets, we set
\begin{equation*}
\Omega_k =
\begin{cases}
\cup_r B^k_r = \cup_{B \in b_k} B \ \ \ &\text{when  } |k| \leq N . \\
\emptyset   \ \ \ &\text{otherwise } .
\end{cases}
\end{equation*}
Then, these sets are decreasing in $k$, i.e.,$ \Omega_{k+1} \subset \Omega_k$ when $-N < k \leq N$.

We now distribute the sets in $\cup_k b_k$ over $\mu$ sequences $\{A_i(\ell)\}_{i \geq 1}$, $0 \leq \ell \leq \mu - 1$,
where $\mu$ will be chosen momentarily to be an appropriately large natural number. Set $i_0(0) = 1$. In the first
$i_1(0) - i_0(0)$ entries of $\{A_i(0)\}_{i \geq 1}$, i.e., for $$i_1(0) \leq i < i_1(0),$$
we place the elements of the sequence $b_N = \{ B_r^N \}_{r \geq 1} $ in the order indicated by the index $r$.
For the next $i_2(0) - i_1(0)$ entries of $\{A_i(0)\}_{i \geq 1}$, i.e., for $$i_1(0) \leq i < i_2(0),$$
we place the elements of the sequence $b_{N-\mu}$. Continue in this way until we reach the first integer $m_0$ such that
$N - m_0 \mu \geq -N$, when we stop. For indices $i$ satisfying  $$i_{m_0}(0) \leq i < i_{m_0 + 1}(0),$$
we place in the sequence $\{A_i(0)\}_{i \geq 1}$ the elements of $b_{N - m_0 \mu}$. The sequences $\{A_i(\ell)\}_{i \geq 1}$,
$1 \leq \ell \leq \mu - 1$, are defined similarly, starting from $b_{N-\ell}$ and using the families $b_{N-\ell-s\mu}$,
$s = 0, 1, ¡¤ ¡¤ ¡¤ ,m_l$, where $m_l$ is chosen to be the biggest integer such that $N - l - m_l \mu \geq - N$.

Since $v^q$ is a weight associated to $\mathscr{B}$ and it satisfies the condition $(A)$, we can apply Lemma
$\ref{Lemma condition (A)}$ to each $\{A_i(\ell)\}_{i \geq 1}$ for some fixed $0 < \lambda < 1$. Then we obtain sequences
$$\{\tilde{A}_i(\ell)\}_{i \geq 1} \subset \{A_i(\ell)\}_{i \geq 1} , \ 0 \leq \ell \leq \mu - 1,$$ which are $\lambda$-scattered with respect to the Lebesgue measure. In view of the definition of the set $k$ and the construction of the families $\{A_i(\ell)\}_{i \geq 1}$, we may use
assertion $(c)$ of Lemma $\ref{Lemma condition (A)}$ to show that: for any $k = N - \ell - s\mu$ with $0 \leq \ell \leq \mu -1$ and $1 \leq s \leq m_{\ell}$, it yields that
\begin{align*}
v^q(\Omega_k) = v^q(\Omega_{N-\ell-s\mu}) &\lesssim  v^q(\Omega_{k+\mu}) + v^q \bigg( \bigcup_{i_s(\ell) \leq i \leq i_{s+1}(\ell)} \tilde{A}_i(\ell) \bigg)  \\
&\leq  v^q(\Omega_{k+\mu}) + \sum_{i=i_s(\ell)}^{i_{s+1}(\ell)-1} v^q(\tilde{A}_i(\ell)).
\end{align*}
For the case $s = 0$, we have $k = N - \ell$ and
\begin{align*}
v^q(\Omega_k) = v^q(\Omega_{N-\ell})
&\lesssim \sum_{i=i_0(\ell)}^{i_{1}(\ell)-1} v^q(\tilde{A}_i(\ell)).
\end{align*}
Now, all these sets $\{ \tilde{A}_i(\ell) \}_{i=i_s(\ell)}^{i_{s+1}(\ell)}$ belong to $b_k$ with $k = N - \ell - s\mu$ and then
\begin{equation}\label{A-i-l}
\varphi(|\tilde{A}_i(\ell)|) \prod_{j=1}^m \big\| f_j \big\|_{\Psi_j,\tilde{A}_i(\ell)} > 2^k.
\end{equation}
Therefore, it now readily follows that
$$
\int_{\{2^{-N} < \mathcal{M}_{\mathscr{B},\varphi,\overrightarrow{\Psi}}(\vec{f}) \leq 2^{N+1}\}} \mathcal{M}_{\mathscr{B},\varphi,\overrightarrow{\Psi}}(\vec{f})(x)^q v^q dx
\lesssim \sum_{k=-N}^{N-1} 2^{kq} v^q(\Omega_k) := \Delta_1.
$$
Thus, we have
\begin{equation}\aligned\label{sum k-N}
\Delta_1
&=\sum_{\ell=0}^{\mu-1} \sum_{0 \leq s \leq m_{\ell}} 2^{q(N-\ell-s\mu)} v^q(\Omega_{N-\ell-s\mu})\\
&\lesssim \sum_{\ell=0}^{\mu-1} \sum_{0 \leq s \leq m_{\ell}} 2^{q(N-\ell-s\mu)} v^q(\Omega_{N-\ell-s\mu+\mu}) \\
&\quad + \sum_{\ell=0}^{\mu-1} \sum_{0 \leq s \leq m_{\ell}} 2^{q(N-\ell-s\mu)} \sum_{i=i_s(\ell)}^{i_{s+1}(\ell)-1} v^q(\tilde{A}_i(\ell))\\
& := \Delta_2 + \Delta_3.
\endaligned
\end{equation}
To analyze the contribution of $\Delta_2$, we choose $\mu$ so large that $2^{-q \mu} \leq \frac12$. Therefore,
\begin{equation}\label{Delta-3}
\Delta_2 = 2^{-q \mu}\sum_{\ell=0}^{\mu-1} \sum_{0 \leq s \leq {m_{\ell-1}}} 2^{q(N-\ell-s\mu)} v^q(\Omega_{N-\ell-s\mu})
\leq 2^{-q \mu} \sum_{k=-N}^{N-1} 2^{kq} v^q(\Omega_k) \leq \frac12 \Delta_1.
\end{equation}
Since everything involved is finite, $\Delta_2$ can be subtracted from $\Delta_1$. This yields that
$$ \int_{\{2^{-N} < \mathcal{M}_{\mathscr{B},\varphi,\overrightarrow{\Psi}}(\vec{f}) \leq 2^{N+1}\}} \mathcal{M}_{\mathscr{B},\varphi,\overrightarrow{\Psi}}(\vec{f})(x)^q v^q dx
\lesssim \Delta_1 \lesssim \Delta_3.$$

Next we consider the contribution of $\Delta_3$.
For the sake of simplicity, for each $\ell$ we let $I(\ell)$ be the index set of
$ \{ \tilde{A}_i(\ell) \}_{0 \leq s \leq m_{\ell}, i_s(\ell) \leq i < i_{s+1}(\ell)}$.
By $(\ref{A-i-l})$ and the generalized H\"{o}lder¡¯s inequality $(\ref{G-H-I})$, we obtain
\begin{equation}\aligned\label{sum-sum-sum}
\Delta_3 &\lesssim \sum_{\ell=0}^{\mu-1} \sum_{i \in I(\ell)} v^q(\tilde{A}_i(\ell)) \bigg[ \varphi(|\tilde{A}_i(\ell)|) \prod_{i=1}^m \big\| f_i \big\|_{\Phi,\tilde{A}_i(\ell)} \bigg]^q \\
&\lesssim \sum_{l=0}^{\mu-1} \sum_{i \in I(\ell)} \bigg[ \prod_{j=1}^m \big\| f_j \big\|_{C_j,\tilde{A}_i(\ell)}^p  |\tilde{A}_i(\ell)| \bigg]^{q/p} \\
&\quad \times \bigg[ \varphi(|\tilde{A}_i(\ell)|)|\tilde{A}_i(\ell)|^{\frac{1}{q} -\frac{1}{p}} \left( \frac{1}{|\tilde{A}_i(\ell)|} \int_{\tilde{A}_i(\ell)} v^q dx \right)^{\frac{1}{q}} \prod_{j=1}^m \big\| w_j^{-1} \big\|_{A_j,\tilde{A}_i(\ell)} \bigg]^q \\
& \lesssim \sum_{\ell=0}^{\mu-1} \sum_{i \in I(\ell)}
\bigg[ \prod_{j=1}^m \big\| f_j w_j \big\|_{C_j, \tilde{A}_i(\ell)}^p |\tilde{A}_i(\ell)| \bigg]^{q/p} \\
&\leq \bigg[ \sum_{\ell=0}^{\mu-1} \sum_{i \in I(\ell)}
 \prod_{j=1}^m \big\| f_j w_j \big\|_{C_j, \tilde{A}_i(\ell)}^p |\tilde{A}_i(\ell)| \bigg]^{q/p} ,
\endaligned
\end{equation}
where in the third step we used the two-weight condition $(\ref{two-weight-Young condition})$.

Now, we introduce the notations
\begin{equation}\label{notations}
E_1(\ell) = \tilde{A}_i(\ell) \ \text{ and } \  E_i(\ell) = \tilde{A}_i(\ell) \setminus \bigcup_{s<i} \tilde{A}_s(\ell),
\ \ \ \forall i \in I(\ell).
\end{equation}
Since the sequences $\{ \tilde{A}_i(\ell) \}_{i \in I(\ell)}$ are $\lambda$-scattered with respect to the Lebesgue measure,
$|\tilde{A}_i(\ell)| \leq \frac{1}{1 - \lambda} |E_i(\ell)|$ for each $i$. Then we have the following estimate for $(\ref{sum-sum-sum})$,
\begin{equation}\label{C-1-lambda}
\Delta_3 \lesssim \bigg[ \frac{1}{1 - \lambda} \sum_{\ell=0}^{\mu-1} \sum_{i \in I(\ell)} \prod_{j=1}^m \big\| f_j w_j \big\|_{C_j,\tilde{A}_i(\ell)}^p |E_i(\ell)| \bigg]^{q/p} \\.
\end{equation}
The collection $\{ E_i(\ell) \}_{i \in I(\ell)}$ is a disjoint family, we can therefore use the fact that
$\mathcal{M}_{\mathscr{B}, \overrightarrow{C}}$ is bounded from $L^{p_1}(\Rn)\times \cdots \times L^{p_m}(\Rn)$ to $L^p(\Rn)$, to estimate the inequality $(\ref{C-1-lambda})$. Hence
\begin{align*}
\Delta_3 & \lesssim \bigg[\sum_{\ell=0}^{\mu-1} \sum_{i \in I(\ell)} \int_{E_i(\ell)} \Big( \mathcal{M}_{\mathscr{B}, \overrightarrow{C}}(f_1 w_1, \cdots, f_m w_m)(x) \Big)^p dx \bigg]^{q/p} \\
&\lesssim \bigg[\int_{\Rn} \Big(\mathcal{M}_{\mathscr{B},\overrightarrow{C}}(f_1 w_1, \cdots, f_m w_m)(x) \Big)^p dx \bigg]^{q/p} \\
&\lesssim \prod_{i=1}^m \big\| f_i w_i \big\|_{L^{p_i}(\Rn)}^q
=\prod_{i=1}^m \big\| f_i \big\|_{L^{p_i}(w_i^{p_i})}^q.
\end{align*}
Finally, letting $N \rightarrow \infty$, we finish the proof.
\qed
%%%%%%%%%%%%%%%%%%%%%%%%%%%%%%%%%%%%%%

\vspace{0.5cm}
\noindent\textbf{Proof of Corollary 2.7.}
For each $i=1, \cdots, m$, we set $\widetilde{w}_i := w_i^{1/{p_i}}$ and $\Psi_i(t) := t^{{p_i}'r}$ for any $t > 0$. Set
$\tilde{v} := v^{1/q}$. Then the power bump condition $(\ref{power bump condition})$ can be rewritten as
\begin{equation*}
\sup_{B\in \mathscr{B}} |B|^{\frac{\alpha}{n} + \frac{1}{q} -\frac{1}{p}} \left( \frac{1}{|B|} \int_B \widetilde{v}^q dx \right)^{\frac{1}{q}} \prod_{i=1}^m \big\| \widetilde{w}_i^{-1} \big\|_{\Psi_i,B} < \infty.
\end{equation*}
In this case, for all $x \in \Rn$,
$$ M_{\mathscr{B},\bar{\Phi}_i}f(x) = \sup_{\substack{B \ni x \\ B \in \mathscr{B}}} \Big\{ \frac{1}{|B|} \int_B |f(y)|^{(p_i' r)'} dy \Big\}^{1/{(p'_i r)'}}.$$
Since $\mathscr{B}$ is a Muckenhoupt basis and $(p_i'r)' < p_i$, every $ M_{\mathscr{B},\bar{\Psi}_i}$ is bounded on $L^{p_i}(\Rn)$. It is easy to see that
$$ \mathcal{M}_{\mathscr{B}, \overrightarrow{\bar{\Psi}}}(\vec{f})(x) \leq \prod_{i=1}^m  M_{\mathscr{B},\bar{\Psi}_i}(f_i)(x),\ x \in \Rn. $$
This inequality implies that $ \mathcal{M}_{\mathscr{B},\overrightarrow{\bar{\Psi}}}$ is bounded from $L^{p_1}(\Rn) \times \cdots \times L^{p_m}(\Rn)$ to $L^p(\Rn)$. Thus, from Theorem $\ref{Theorem M-B}$, it follows that
$$\mathcal{M}_{\mathscr{B},\alpha} : L^{p_1}(\tilde{w}_1^{p_1}) \times \cdots \times L^{p_m}(\tilde{w}_m^{p_m}) \rightarrow L^q(\tilde{v}^q),$$
which completes the proof.
\qed

%%%%%%%%%%%%%%%%%%%%%%%%%%%
\vspace{0.5cm}
\noindent\textbf{Proof of Theorem 2.4.}
The fact that $\mathcal{R}$ is a Muckenhoupt basis can be found in \cite[p.~454]{C-Rubio}. Moreover, for the rectangle family $\mathcal{R}$, the $A_{\infty,\mathcal{R}}$ condition is equivalent to Tauberian condition $(A_{\mathcal{R},\gamma,w})$, which was proved in Corollary $4.8$ \cite{HLP}. Therefore, Theorem $\ref{Theorem power-R}$ follows from these facts and Corollary $\ref{Theorem power-B}$.
\qed
%%%%%%%%%%%%%%%%%%%%%%%%%%%

\vspace{0.3cm}
\noindent\textbf{Proof of Corollary 2.5.}
From Theorem $\ref{weight-1}$, it follows that $v^p \in A_{mp,\mathcal{R}} \subset A_{\infty,\mathcal{R}}$. As for $v=\prod_{i=1}^m u_i^{1/{p_i}}$ and $w_i=M_{\alpha p_i/m}(u_i)$, it is easy to verify that $(\vec{w},v)$ satisfies the power bump condition $(\ref{power condition})$. Hence, it yields the desired result.
\qed

%%%%%%%%%%%%%%%%%%%%%%%%%%%
\vspace{0.3cm}
\noindent\textbf{Proof of Theorem 2.8.}
Proposition $\ref{Theorem M-Y}$ follows by using the similar arguments as we have done in the proof of Theorem $\ref{Theorem M-B}$. The difference lies in the boundedness of $\mathcal{M}_{\vec{Y}'}$ and the generalized H\"{o}lder's inequality
$$
\int_{\Rn}|f(x)g(x)|dx \leq ||f||_{X} ||g||_{X'},
$$
for any Banach function space $X$.
\qed
%%%%%%%%%%%%%%%%%%%%%%%%%%%%%%%%%%%%%%

\vspace{0.5cm}
\noindent\textbf{Proof of Theorem 2.2.}
It is well known that there exists some $h \in L^{r'}(\Rn)$ with norm $||h||_{L^{r'}(\Rn)}=1$ such that
$$
\big\| M_{\mathcal{R}} f \big\|_{L^p(\ell^q,w^p)}^p
= \int_{\Rn} \Big( \sum_{j}M_{\mathcal{R}} f_j(x)^q w(x)^q \Big)^{r} dx
=\sum_j \int_{\Rn} M_{\mathcal{R}} f_j(x)^q w(x)^q h(x) dx.
$$
In order to estimate $\int_{\Rn} M_{\mathcal{R}} f_j(x)^q w(x)^q h(x) dx$ for fixed $j$, we adopt the similar method as in the proof of Theorem $\ref{Theorem M-B}$. Since we obtained the inequality $(\ref{Delta-3})$, we get for any fixed $N>0$
\begin{equation*}\aligned
\Lambda_{j,N}&:=\int_{\{2^{-N} < M_{\mathcal{R}}f_j(x) \leq 2^{N+1}\}} M_{\mathcal{R}}f_j(x)^q w(x)^q h(x) dx \\
&\lesssim \sum_{\ell=0}^{\mu-1} \sum_{0 \leq s \leq m_{\ell}} \sum_{i=i_s(\ell)}^{i_{s+1}(\ell)-1} (w^q h)(\tilde{A}_i(\ell))
\bigg( \frac{1}{|\tilde{A}_i(\ell)|} \int_{\tilde{A}_i(\ell)} |f_j(x)| dx \bigg)^q.
\endaligned
\end{equation*}
Making use of the H\"{o}lder inequality and two weight condition $(\ref{A-B})$, we deduce that
\begin{align*}
\Lambda_{j,N}
&\lesssim  \sum_{\ell,s,i} || w^q ||_{A,\tilde{A}_i(\ell)}
||h||_{\bar{A},\tilde{A}_i(\ell)} ||f_j v ||_{\bar{B},\tilde{A}_i(\ell)}^q ||v^{-1}||_{B,\tilde{A}_i(\ell)}^q |\tilde{A}_i(\ell)| \\
&\lesssim \sum_{\ell,s,i} ||f_j v ||_{\bar{B},\tilde{A}_i(\ell)}^q ||h||_{\bar{A},\tilde{A}_i(\ell)}  |\tilde{A}_i(\ell)| .
\end{align*}
Using the same notations $\{E_i(\ell)\}$ as in $(\ref{notations})$, we may obtain
\begin{align*}
\Lambda_{j,N}
&\lesssim \sum_{\ell,s,i} ||f_j v ||_{\bar{B},\tilde{A}_i(\ell)}^q ||h||_{\bar{A},\tilde{A}_i(\ell)}  |E_i(\ell)|  \\
&\leq \sum_{\ell,i}\int_{E_i(\ell)} M_{\mathcal{R},\bar{B}}(f_j v)(x)^q \ M_{\mathcal{R},\bar{A}}h(x) \ dx \\
&\lesssim \int_{\Rn} M_{\mathcal{R},\bar{B}}(f_j v)(x)^q \ M_{\mathcal{R},\bar{A}}h(x) \ dx.
\end{align*}
Letting $N \rightarrow \infty$, we have
$$
\int_{\Rn} M_{\mathcal{R}} f_j(x)^q w(x)^q h(x) dx
\lesssim \int_{\Rn} M_{\mathcal{R},\bar{B}}(f_j v)(x)^q \ M_{\mathcal{R},\bar{A}}h(x) \ dx.
$$
Therefore, from the H\"{o}lder inequality and the following Proposition $\ref{pro-vector}$, it follows that
\begin{align*}
\big\| M_{\mathcal{R}} f \big\|_{L^p(\ell^q,w^p)}^q
&\lesssim \Big\| \Big(\sum_{j=1}( M_{\mathcal{R},\bar{B}}(f_j v))^q \Big)^{1/q} \Big\|_{L^p(\Rn)}^q \
\big\| M_{\mathcal{R},\bar{A}}h \big\|_{L^{r'}(\Rn)} \\
&\lesssim \Big\| \Big(\sum_{j=1}(f_j v)^q \Big)^{1/q} \Big\|_{L^p(\Rn)}^q \ \big\| h \big\|_{L^{r'}(\Rn)}
= ||f||_{L^p(\ell^q,v^p)}^q.
\end{align*}
This completes the proof of Theorem $\ref{Theorem lq}$.

\qed
%%%%%%%%%%%%%%%%%%%%%%%%%%%%%%

It remains to show the following proposition.
\begin{proposition}\label{pro-vector}
Let $1 < q < p < \infty$. Suppose that $\Phi$ is a Young function such that
$\Phi \in B_q^*$. If $(A_{\mathcal{R},\gamma,g})$ condition holds for some
fixed $\gamma \in (0,1)$ and any nonnegative function $g \in L^{r'}(\Rn)$
with $||g||_{L^{r'}(\Rn)}=1$, then we have
$$
\big\| M_{\mathcal{R},\Phi}f \big\|_{L^p(\ell^q,\Rn)}
\lesssim \big\| f \big\|_{L^p(\ell^q,\Rn)}.
$$
\end{proposition}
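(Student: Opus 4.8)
The plan is to reduce the vector-valued bound to a one-weight scalar bound for $M_{\mathcal R,\Phi}$, by combining $L^p(\ell^q)$-duality with the Rubio de Francia iteration, and then to prove that scalar bound using the hypothesis $\Phi\in B_q^*$. Throughout set $r=p/q$, so $r>1$ because $q<p$.

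First I would dualize. Raising the desired inequality to the power $q$ turns the left side into $\big\|\sum_j(M_{\mathcal R,\Phi}f_j)^q\big\|_{L^r(\Rn)}$, so by $L^r$-duality it suffices to prove $\sum_j\int_{\Rn}(M_{\mathcal R,\Phi}f_j)^q\,g\,dx\lesssim\big\|\sum_j|f_j|^q\big\|_{L^r(\Rn)}$ for every nonnegative $g\in L^{r'}(\Rn)$ with $\|g\|_{L^{r'}}=1$. Since $r'>1$, the strong maximal operator $M_{\mathcal R}$ is bounded on $L^{r'}(\Rn)$ (Garc\'{i}a-Cuerva and Rubio de Francia), so the iteration $u:=\sum_{k\ge0}\big(2\|M_{\mathcal R}\|_{L^{r'}}\big)^{-k}M_{\mathcal R}^{k}g$ converges in $L^{r'}$ and produces a weight with $g\le u$, $\|u\|_{L^{r'}}\le 2$, and $M_{\mathcal R}u\le 2\|M_{\mathcal R}\|_{L^{r'}}\,u$; the last inequality is exactly the statement that $u\in A_{1,\mathcal R}$ with $[u]_{A_{1,\mathcal R}}$ bounded by a constant depending only on $n$ and $r'$. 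Then $g\le u\in A_{1,\mathcal R}\subset A_{\infty,\mathcal R}$, and for rectangles $A_{\infty,\mathcal R}$ is equivalent to the Tauberian condition $(A_{\mathcal R,\gamma,u})$ (Hagelstein and Parissis; Corollary~4.8 of~\cite{HLP}), so the hypotheses needed to run the covering lemma are available with uniform constants.

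It now suffices to prove the one-weight estimate $\int_{\Rn}(M_{\mathcal R,\Phi}h)^q\,u\,dx\lesssim_{[u]_{A_{1,\mathcal R}}}\int_{\Rn}|h|^q\,u\,dx$ for $u\in A_{1,\mathcal R}$; granting it, taking $h=f_j$, summing, and using $\sum_j\int|f_j|^q u=\int\big(\sum_j|f_j|^q\big)u\le\big\|\sum_j|f_j|^q\big\|_{L^r}\|u\|_{L^{r'}}\lesssim\big\|\sum_j|f_j|^q\big\|_{L^r}=\|f\|_{L^p(\ell^q)}^q$ (together with $g\le u$) closes the proof. To prove this estimate I would fix a large integer $N$, localize to $\{2^{-N}<M_{\mathcal R,\Phi}h\le 2^{N+1}\}$, decompose dyadically in the level $2^k$ of $M_{\mathcal R,\Phi}h$, cover each set $\{M_{\mathcal R,\Phi}h>2^k\}$ by rectangles whose $\Phi$-average of $h$ exceeds $2^k$, and run the selection scheme of Lemma~\ref{Lemma condition (A)} (legitimate since $u$ satisfies $(A_{\mathcal R,\gamma,u})$): distribute the families over $\mu$ subsequences, extract $\gamma$-scattered subfamilies $\{\tilde R\}$, and absorb the geometric series exactly as in the passage from $\Delta_1$ to $\Delta_3$ in the proof of Theorem~\ref{Theorem M-B}. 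This reduces everything to bounding $\sum_{\tilde R}u(\tilde R)\,\|h\|_{\Phi,\tilde R}^q$, uniformly in $N$. Finally, decomposing $h$ inside each $\tilde R$ according to its own size, estimating the resulting pieces through $\Phi^{-1}$, and using the weighted Jessen-Marcinkiewicz-Zygmund inequality~$(\ref{Weighted})$ for $u\in A_{\infty,\mathcal R}$ (which carries the $L(\log L)^{n-1}$ loss), one sums the double series in $k$ and in the size-parameter of $h$ to get $\sum_{\tilde R}u(\tilde R)\|h\|_{\Phi,\tilde R}^q\lesssim\int_{\Rn}|h|^q u\,dx$; the convergence of that double series is precisely what $\int_c^\infty\Phi_n(\Phi(t))\,t^{-q}\,dt/t<\infty$, i.e.\ $\Phi\in B_q^*$, guarantees.

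The duality step and the Rubio de Francia construction of $u$ are routine, and the use of Lemma~\ref{Lemma condition (A)} with its geometric-series absorption is a rerun of the arguments already used for Theorems~\ref{Theorem M-B} and~\ref{Theorem lq}. I expect the genuine difficulty to be the last step --- passing from $\sum_{\tilde R}u(\tilde R)\|h\|_{\Phi,\tilde R}^q$ to $\int|h|^q u\,dx$ --- which is the weighted, rectangular analogue of P\'erez's theorem that $\Phi\in B_q$ makes $M_\Phi$ bounded on $L^q(w)$ for $w\in A_1$; here the factor $\Phi_n(t)=t[1+(\log^+t)^{n-1}]$ built into the $B_q^*$ condition is exactly what pays for the extra $L(\log L)^{n-1}$ inefficiency of the strong maximal operator relative to the ordinary one.
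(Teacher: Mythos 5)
Your duality reduction is the same opening move the paper makes, but from there the routes diverge and yours leaves a substantial gap. The paper does not construct an auxiliary $A_{1,\mathcal R}$ weight at all: once it has $\sum_j\int_{\Rn}M_{\mathcal R,\Phi}f_j(x)^q g(x)\,dx$ with $g\in L^{r'}(\Rn)$ of unit norm, it invokes the Fefferman--Stein two-weight inequality for $M_{\mathcal R,\Phi}$ directly with the weight $g$ itself --- namely Theorem~2.1 of \cite{L-Luque}, which under $\Phi\in B_q^*$ and the Tauberian condition $(A_{\mathcal R,\gamma,g})$ gives $\int(M_{\mathcal R,\Phi}f_j)^q g\lesssim\int|f_j|^q M_{\mathcal R}g$. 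That is exactly why the proposition carries the hypothesis that $(A_{\mathcal R,\gamma,g})$ holds for every such $g$. H\"older with exponents $r,r'$ and the $L^{r'}$ boundedness of $M_{\mathcal R}$ then finish the proof in two lines.

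By contrast, you take a detour: build $u\ge g$ with $u\in A_{1,\mathcal R}$ by Rubio de Francia iteration and reduce to the one-weight estimate $\int(M_{\mathcal R,\Phi}h)^q u\lesssim\int|h|^q u$. The reduction is sound, and if carried through it would actually have a nice by-product --- since $u\in A_{1,\mathcal R}\subset A_{\infty,\mathcal R}$ automatically satisfies the Tauberian condition by Corollary~4.8 of \cite{HLP}, the hypothesis on $g$ in the statement would become superfluous. The problem is that you never prove the one-weight estimate. You describe a plan (dyadic localization, the selection scheme of Lemma~\ref{Lemma condition (A)}, absorption of the geometric series as in the proof of Theorem~\ref{Theorem M-B}, then a size-decomposition of $h$ using the weighted JMZ inequality and $\Phi\in B_q^*$), and you candidly flag the final summation --- passing from $\sum_{\tilde R}u(\tilde R)\|h\|_{\Phi,\tilde R}^q$ to $\int|h|^q u\,dx$ --- as ``the genuine difficulty.'' That flagged step is precisely the content of Liu--Luque's theorem: you are proposing to re-derive the black-box result the paper cites, without actually doing it. Until you carry out the summation showing that the $\Phi_n$-factor inside the $B_q^*$ integral pays for the $L(\log L)^{n-1}$ loss in the weighted covering estimate uniformly in the level $k$, the argument is incomplete. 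The fastest repair is to cite Theorem~2.1 of \cite{L-Luque} outright; with it, your RdF step collapses to $M_{\mathcal R}u\lesssim u$ followed by H\"older --- or you can drop the RdF step entirely and apply it with the weight $g$, which is exactly what the paper does.
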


\begin{proof}
Set $r=p/q$. Then, it holds that
$$
\big\| M_{\mathcal{R},\Phi}f \big\|_{L^p(\ell^q,\Rn)}^q
=\sup_{||g||_{L^{r'}(\Rn)}=1}
\bigg| \int_{\Rn} \sum_j M_{\mathcal{R},\Phi}f_j(x)^q g(x) dx \bigg|.
$$
For fixed $g \in L^{r'}(\Rn)$ with $||g||_{L^{r'}(\Rn)}=1$, by the
Fefferman-Stein inequality for the maximal operator $M_{\mathcal{R},\Phi}$
( see Theorem $2.1$ \cite{L-Luque}), we have
\begin{align*}
\bigg| \int_{\Rn} \sum_j M_{\mathcal{R},\Phi}f_j(x)^q g(x) dx \bigg|
&\leq \sum_j \int_{\Rn}  M_{\mathcal{R},\Phi}f_j(x)^q |g(x)| dx \\
&\lesssim \sum_j \int_{\Rn} |f_j(x)|^q M_{\mathcal{R}} g(x) dx \\
&\leq \Big\| \sum_j |f_j|^q \Big\|_{L^r(\Rn)}
\big\| M_{\mathcal{R}} g \big\|_{L^{r'}(\Rn)} \\
&\lesssim || f ||_{L^p(\ell^q,\Rn)}^q  || g ||_{L^{r'}(\Rn)}
=|| f ||_{L^p(\ell^q,\Rn)}^q .
\end{align*}
This completes the proof of Proposition \ref{pro-vector}.
\end{proof}
%%%%%%%%%%%%%%%%%%%%%%%%%%%%%%%%%%%%%%%%%%%%%%%%%%%%%%%%%%%%%%%%%%%%%%%%%%%%%%%%%%%%

\end{document}